\newtheorem{theorem}{Theorem}[section]
\newtheorem{lemma}[theorem]{Lemma}
\newtheorem{proposition}[theorem]{Proposition}
\newtheorem{definition}[theorem]{Definition}
\newtheorem{example}{\bf Example\/}
\title{Classification of two-dimensional smooth projective algebraic semigroups}
\author{Li Duo}
\begin{document}

\maketitle{Most of this article has been written
down during a stay at Institut Fourier, supervised by Michel. Brion.\\}
\textbf{Introduction:}
\\In this article, we consider algebraic varieties
and schemes defined over the field $\mathbb C$ of complex numbers.
An algebraic semigroup is an algebraic variety endowed
with an associative composition law which is a morphism
of varieties. If in addition there is a neutral element,
then we obtain the notion of an algebraic monoid. \\The theory of linear (or equivalently, affine)
algebraic
monoids has been chiefly developed by Putcha and Renner (see the books ~\cite {Putcha} and ~\cite
{Renner}).\\Also, the structure of complete algebraic
monoids is given by a result of Mumford and Ramanujam
(\cite{Mumford} Chapter 2, Section 4, Appendix): if a complete irreducible
variety $X$ has a composition law with a neutral element,
then $X$ is an abelian variety with group law $\mu$.
\\ In this article, we address the classification of algebraic
semigroups and we consider the case of smooth projective
surfaces. The case of curves is treated in ~\cite {Michel. Brion} , which also
obtains a general structure theorem for complete algebraic
semigroups. Using that theorem, we can reduce our
classification problem to the following one: determine
all tuples.$(S, C, \pi, \sigma)$, where $\pi$ is a morphism from such a surface $S$ to a curve $C$,
with a section $\sigma$.\\ We then show that, in most cases, the algebraic semigroup structures on $S$
can be reduced to the minimal model of $S$. Then in Theorem \ref {M1}, we give a full description of
those surfaces $S$, which has at least one non-trivial algebraic semigroup structure, when their
Kodaira dimension satisfies ``$ \kappa(S)=-\infty$''. In the third part, we consider the  case ``$
\kappa (S)=0$'', and  in  Theorem \ref {M2} and  Theorem \ref {M3}, we solve this classification
problem when $S$ is bielliptic or abelian; in   Theorem \ref {M4}, we show that there are no
non-trivial algebraic semigroup structures on an  Enriques surface or a general $K3$ surface. For the
case ``$ \kappa (S)=1$'', in  Theorem \ref {M5}, we give a description of one special type of elliptic
surfaces which also admit non-trivial algebraic semigroup laws.\\
 For a given surface $S$,  it is  an interesting problem to describe all algebraic semigroup
 structures on it and determine the dimension of this moduli.  In this article, we solve this problem
 by using a result of
~\cite {Michel. Brion} , Section 4.5, Remark 16: the families of algebraic
semigroup laws on a given variety $X$ are parametrized
by a quasi-projective scheme $SL(S)$. \\Moreover, given an
algebraic semigroup structure $\mu_0$ on $S$ and the
associated contraction \xymatrix{S\ar[r]|\pi &C\ar@/_/@{>}[l]|\sigma}, the connected component
of $\mu_0$ in $SL(S)$ is isomorphic to $$Mor_{\pi}(C, S)\times A$$ where $Mor_{\pi}(C, S)$ is the
scheme of sections of $\pi$ and $A$ is an abelian variety which is  the smallest two sided ideal of
$(S, \mu_0)$. Since $Mor_{\pi}(C, S)$ can be viewed as an open subscheme of $Hilb(S)$ by assigning a
section to its image in $S$, we can use the local study of $Hilb(S)$ to determine the local dimension
of $Mor_{\pi}(C, S)$ at each section.\\By using the above ''deformation method ", we solve the problem
for those surfaces satisfying $\kappa(S)=0$ or $1$. In Theorem 5.2, we apply the function field
version of Mordell-Weil theorem to solve the problem in the case ''$\kappa(S)=2$".\\Throughout this
article, we use the books  ~\cite{Hartshone} and ~\cite{Beauville} as general references for surfaces.
\section{Definitions and Rough classification}
\begin{definition}An abstract semigroup is a set $S$ equipped with an associative composition law
$\mu :S\times S\to S $.
When $S$ is a variety and $\mu$ is a morphism, we say that $(S,\mu)$ is an algebraic semigroup.
\end{definition}
\begin{theorem}  (M.Brion ~\cite {Michel. Brion}, Section 4.3, Theorem 6.) \\Let $S$ be a complete
variety, and $\mu:S\times S\longrightarrow S$ a morphism. Then $(S,\mu)$ is an algebraic semigroup if
and only if there exist  complete varieties $X$ , $Y$,  an abelian variety $(A,+ )$ and morphisms
($\sigma$, $\pi$)
making the following diagram commutative:
\begin{displaymath}\xymatrix{X\times A \times Y \ar[r]^-\sigma\ar[d]^-{id}&S\ar[dl]^-\pi\\X\times A
\times Y}\end{displaymath}
and satisfying  $\mu(s_1,s_2)=\sigma\Big(\nu\big(\pi(s_1),\pi(s_2)\big)\Big)$, where
\[\nu\big((x_1,a_1,y_1),(x_2,a_2,y_2)\big)=\big(x_1,a_1+a_2,y_2\big).\] In particular, $\sigma$ is a
closed immersion and  a section of $\pi$.
\end {theorem}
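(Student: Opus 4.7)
The plan is to prove the two implications separately, with the ``if'' direction being essentially formal and the ``only if'' direction requiring the bulk of the work. For the ``if'' direction, one checks directly that the operation $\nu$ on $X \times A \times Y$ defined by the stated formula is associative (it is the ``Rees matrix'' product over the group $A$, which only retains the left coordinate from the first factor and the right coordinate from the second factor, while adding the middle terms in the abelian variety). Then the section identity $\pi \circ \sigma = \mathrm{id}$ transports associativity from $\nu$ to $\mu = \sigma \circ \nu \circ (\pi \times \pi)$ on $S$.

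For the ``only if'' direction, I would follow the Rees--Suschkewitsch philosophy adapted to the complete algebraic setting. First I would exhibit a smallest closed two-sided ideal $K \subseteq S$: an intersection of closed two-sided ideals is a closed two-sided ideal, and Noetherianity of $S$ ensures that the intersection of all such ideals is itself a non-empty closed two-sided ideal $K$, the kernel of $(S,\mu)$. Next I would produce an idempotent $e \in K$ by analyzing the closure of the monogenic subsemigroup $\overline{\{s^n : n \geq 1\}}$ of some $s \in K$; completeness forces this commutative complete algebraic subsemigroup to contain an idempotent, which, by minimality of $K$, lies in $K$.

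With $e$ in hand I would set $X := Ke$, $Y := eK$, and $A := eKe$. Standard semigroup manipulations show that $A$ is a group with identity $e$; being closed in a complete variety it is itself complete, and the Mumford--Ramanujam result recalled in the introduction then forces $A$ to be an abelian variety. The product map $X \times A \times Y \to K$, $(x, a, y) \mapsto xay$, should be shown to be an isomorphism of varieties transporting the induced multiplication to $\nu$; the usual ``sandwich matrix'' of an abstract Rees matrix semigroup is expected to collapse here because any morphism from the rigid factor $X \times Y$ to the abelian variety $A$ that sends a point of $\{e\} \times \{e\}$ to $e$ must be constant.

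Finally I would construct the contraction: define $\pi : S \to K \simeq X \times A \times Y$ by combining left and right multiplication by $e$ (so that $\pi(s)$ encodes $(se, e s e, es)$ after identification), verify that it is a morphism, that $\sigma : K \hookrightarrow S$ is a section, and that $\mu(s_1,s_2) = \sigma(\nu(\pi(s_1),\pi(s_2)))$. The main obstacle I anticipate is the upgrade from the classical set-theoretic Rees structure theory to the scheme-theoretic level: each time one ``picks an idempotent'' or checks a pointwise identity, one must verify that the construction is algebraic, and this is precisely where the completeness of $S$ and the rigidity of abelian varieties should do the essential work.
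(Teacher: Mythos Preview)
The paper does not actually prove this theorem: it is quoted verbatim from Brion's article and used as a black box, so there is no ``paper's own proof'' to compare against beyond the citation. Your overall architecture---exhibit the minimal closed two-sided ideal $K$, find an idempotent $e\in K$ via the closure of a monogenic subsemigroup, recognise $eKe$ as a complete algebraic group and hence an abelian variety, and then reconstruct $\mu$ from the Rees--Suschkewitsch data---is indeed the strategy Brion follows, so at the level of ideas you are on the right track.

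There is, however, a genuine gap in your specific choices of $X$ and $Y$. Setting $X=Ke$ and $Y=eK$ gives varieties that each already contain a copy of $A=eKe$, so the product map
\[
Ke \times eKe \times eK \longrightarrow K,\qquad (x,a,y)\longmapsto xay
\]
is \emph{not} an isomorphism once $\dim A>0$: for instance $(x,a,y)$ and $(xa,e,y)$ have the same image. Concretely, if $S=A$ is an abelian variety with $e=0$, then $Ke=eK=eKe=A$ and your ``$X\times A\times Y$'' has dimension $3\dim A$ rather than $\dim A$. The correct choice is one dimension of $A$ smaller on each side: take $X$ to be the closed subscheme of idempotents in $Ke$ (equivalently the quotient $Ke/A$ for the free right $A$-action), and symmetrically for $Y$. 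In the abstract Rees picture $K\cong I\times G\times\Lambda$ one has $Ke\cong I\times G$, and the idempotents in $Ke$ form a copy of $I$, which is what $X$ must be.

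Your formula $\pi(s)=(se,ese,es)$ inherits the same defect: it lands in $Ke\times eKe\times eK$, not in the smaller $X\times A\times Y$, and moreover the middle coordinate $ese$ is already determined by either outer one, so the image sits in a proper subvariety. After correcting $X$ and $Y$ as above, $\pi$ should be built from the projections $K\to X$, $K\to A$, $K\to Y$ coming from the product decomposition of $K$, applied to $se$, $ese$, $es$ respectively. With that adjustment the rest of your outline (rigidity of $A$ forcing the sandwich matrix to be trivial, verification of the factorisation of $\mu$) goes through as you indicate.
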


\textbf{Notation}: We call $X\times A\times Y$ the kernel of $(S,\mu)$ and $A$ the associated abelian
variety of $(S,\mu)$.
\subsection{First steps in classification}
 Note that $\dim S=2$ and $\sigma:X\times A\times Y\hookrightarrow S$ is a closed immersion, so $\dim
 X+\dim Y+\dim A\le 2$.\\Then we list all the possibilities as follows:\\
\begin{enumerate}[1)]
\item $\dim X=\dim Y=0 , \dim A=2$. Then  $(S,\mu)=(A,+)$ is an abelian surface.
\item $\dim X=\dim Y=0 , \dim A=1 $. Then $(A,+)$ is an elliptic curve and we have the following
    commutative diagram:\\
$$\xymatrix{(A,+)\ar[r]^-\sigma\ar[d]^-{id}&(S,\mu)\ar[dl]^-\pi\\(A,+)}$$\\
where the semigroup law is given by the formula: \[\mu(s_1,s_2)=\sigma\
\big(\pi(s_1)+\pi(s_2)\big).\]

\item $\dim X=\dim Y=\dim A=0$ and $\mu$ is constant.
\item $\dim X=1$, $\dim Y=0$, $\dim A=0$ and $\mu(s_1,s_2)=\sigma(\pi(s_1))$.
\item (similar to 4)) $\dim X=0$, $\dim Y=1$, $\dim A=0$ and $\mu(s_1,s_2)=\sigma(\pi(s_2))$.
\item  $\dim X=1$, $ \dim Y=0$, $\dim A=1$. Then $S=X\times A$,  and the semigroup law is given by
    $\mu((x_1,a_1),(x_2,a_2))=(x_1,a_1+a_2)$.
\item (similar to 6))  $\dim X=0$, $\dim Y=1$, $\dim A=1$. Then $S=Y\times A$ and the semigroup law
    is given by $\mu((y_1,a_1),(y_2,a_2))=(y_2,a_1+a_2)$.
\item $\dim X=2$, $\dim Y=0$, $\dim A=0$ . Then $S=X$, and the semigroup law is given by
    $\mu(s_1,s_2)=s_1$.
\item (similar to 8))    $\dim X=0$, $\dim Y=2$, $\dim A=0$. Then $S=X$, and the semigroup law is
    given by $\mu(s_1,s_2)=s_2$.
\item $\dim X=\dim Y=1$. Then $S= X \times Y$  and the semigroup law is given by
    $\mu((x_1,y_1),(x_2,y_2))=(x_1,y_2)$.
\end{enumerate}
\textbf{Remark 1):} \\We call cases 1) 3) 6) 7) 8) 9) 10) \textbf {trivial} and they will not be
considered any more.\\
In cases 2) 4) 5), we observe that $S$ always maps to a curve $C$ via  some morphism
$\pi:S\longrightarrow C $ with a section $\sigma: C\longrightarrow S$, and there is always an
algebraic semigroup law $\tilde\mu$ on $C$, such that the semigroup law $\mu$ on $S$ is given
by:\begin{equation} \label{law}
\mu(s_1,s_2)=\sigma\big(\tilde\mu(\pi(s_1),\pi(s_2))\big).\end{equation} To be concrete, in case 2),
$(C,\tilde\mu)=(A,+)$; in case 4), $C=X$, and for all $ x_1,x_2\in X$, $ \tilde\mu(x_1,x_2)=x_1$; in
case 5), $C=Y$, and for all $ y_1,y_2\in Y$, $ \tilde\mu(y_1,y_2)=y_2$.  We call the semigroup laws on
$S$ in these cases \textbf {nontrivial}. In the following, we deal with  non-trivial algebraic
semigroup structures, and we denote the associated contraction morphism $\pi$ and its section
$\sigma$, by the following diagram \xymatrix{S\ar[r]|\pi &C\ar@/_/@{>}[l]|\sigma}.\begin{lemma}If
\xymatrix{S\ar[r]|\pi &C\ar@/_/@{>}[l]|\sigma}, then $\pi_*\mathcal O_S=\mathcal O_C$.\end{lemma}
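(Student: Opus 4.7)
My plan is to use the Stein factorization of $\pi$ together with the existence of the section $\sigma$. Write $\pi$ as the composite $S \xrightarrow{\alpha} C' \xrightarrow{f} C$, where $C' := \mathrm{Spec}_C(\pi_*\mathcal O_S)$, the morphism $\alpha$ has connected fibers and satisfies $\alpha_*\mathcal O_S = \mathcal O_{C'}$, and $f$ is finite. Under this identification, $\pi_*\mathcal O_S = f_*\mathcal O_{C'}$, so the claim $\pi_*\mathcal O_S = \mathcal O_C$ is equivalent to saying that $f : C' \to C$ is an isomorphism.

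Next I would use smoothness of $S$ to control $C'$. Since $S$ is a smooth surface, it is normal, and normality passes through Stein factorization, so $C'$ is a normal (hence smooth) curve. In particular, both $C$ and $C'$ are smooth, and $f$ is a finite surjective morphism between them; its degree is the generic rank of $\pi_*\mathcal O_S$.

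Now I would use $\sigma$ to force $\deg f = 1$. The composite $\sigma' := \alpha \circ \sigma : C \to C'$ satisfies $f \circ \sigma' = \pi \circ \sigma = \mathrm{id}_C$, so $\sigma'$ is a section of $f$. A finite morphism between smooth curves that admits a section is birational, hence an isomorphism (a birational finite morphism to a normal variety is an isomorphism by Zariski's Main Theorem, or more elementarily, a finite map of smooth curves of degree $1$ is an isomorphism). This gives $C' \cong C$ and therefore $\pi_*\mathcal O_S = \mathcal O_C$.

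The only step requiring care is the assertion that the Stein factorization of a morphism from a smooth (in particular normal) variety to a curve produces a normal intermediate curve $C'$; this is standard (for instance Hartshorne, Corollary III.11.5) but is the key technical input, since without normality of $C'$ the section argument alone would only show $f$ is birational, not an isomorphism. Once that is in place, the rest of the argument is essentially formal.
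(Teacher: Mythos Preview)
Your argument via Stein factorization is correct and is the standard way to prove this; the paper itself does not give a proof but simply cites Brion's paper (Section~4.3, Lemma~1), so there is no detailed argument in the paper to compare against.

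One small point worth tightening: you assert that ``both $C$ and $C'$ are smooth'' and then invoke Zariski's Main Theorem for a finite birational map onto a normal target. But smoothness (normality) of $C$ is not given in advance --- in fact, the paper explicitly deduces that $C$ is normal \emph{from} this lemma, immediately after stating it. Your argument does not actually need $C$ to be smooth, though. Since $S$ is an integral variety and $\alpha$ is surjective, $C'$ is connected; being normal of dimension~$1$, it is an integral smooth curve. The section $\sigma' = \alpha\circ\sigma$ of the separated morphism $f$ is a closed immersion $C \hookrightarrow C'$, and since $C$ is a (reduced, irreducible) curve and $C'$ is an integral curve, the image $\sigma'(C)$ must equal $C'$ as schemes. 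Hence $\sigma'$ is an isomorphism, and therefore so is $f$. This avoids any appeal to normality of $C$ and in fact recovers it as a consequence, matching how the paper uses the lemma.
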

\begin{proof} ~\cite {Michel. Brion}, Section 4.3, Lemma 1.\end{proof}
From this lemma, we deduce that $\pi$ is a fibration, (which means every fibre of $\pi$ is connected),
and the projective curve $C$ is normal, hence smooth.
Now in order to solve the problem, our task becomes the following one: \\classify all
\xymatrix{S\ar[r]|\pi &C\ar@/_/@{>}[l]|\sigma}, where $\pi$ is a fibration and $\sigma$ is a section
of $\pi$.
\subsection{Reduction of  the problem to the minimal model}
In this subsection, we keep the assumption that \xymatrix{S\ar[r]|\pi &C\ar@/_/@{>}[l]|\sigma}. When
$g(C)\ge1$, we  prove that $S$ has a non-trivial algebraic semigroup structure if and only if its
minimal model has one. This will  reduce our problem to the case that $S$ is minimal. In Proposition
1.4, we consider the   blowing-up of a point on $S$, and we show that any non-trivial algebraic
semigroup structure can be lifted to this blowing-up. Then we give an example to show that in some
cases, after blowing-down an exceptional rational curve on $S$, the non-trivial semigroup structures
are not  preserved. In Proposition 1.5, we show that, if $g(C)\ge1$, any non-trivial algebraic
semigroup structure is preserved after contracting an exceptional rational curve.\\
The main results of this subsection are Proposition \ref {P1} and Proposition \ref {P2}.
\begin{proposition}\label{P1}
Assume that the non-trivial semigroup law $\mu$ on $S$ is given by the formula (1) in Remark 1). Let
$\xymatrix{\varphi: S'\ar[r] &S}$ denote the blowing-up of  an arbitrary point $P$ on $S$. Then $S'$
has a unique non-trivial algebraic semigroup structure such that  $\varphi$ is a homomorphism.
\end{proposition}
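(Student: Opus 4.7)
The plan is to lift the contraction data $(\pi,\sigma)$ through $\varphi$ and then define $\mu'$ by the same formula (\ref{law}) applied to the lifted data. First I would set $\pi':=\pi\circ\varphi:S'\to C$. The delicate step is producing a section $\sigma':C\to S'$ of $\pi'$ satisfying $\varphi\circ\sigma'=\sigma$. If $P\notin\sigma(C)$, then $\varphi$ is an isomorphism over a neighborhood of $\sigma(C)$, and $\sigma'$ is obtained by inverting $\varphi$ there. If $P\in\sigma(C)$, then since $\sigma$ is a closed immersion and $C$ is smooth, $\sigma(C)$ is smooth at $P$; its strict transform $D\subset S'$ is isomorphic to $\sigma(C)$ via $\varphi$, and I would set $\sigma'$ to be $\sigma$ followed by the inverse of $\varphi|_D$. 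In either case $\pi'\circ\sigma'=\pi\circ\sigma=\mathrm{id}_C$ and $\varphi\circ\sigma'=\sigma$.

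With $(\pi',\sigma')$ in hand, I would define
\[
\mu' \;:=\; \sigma' \circ \tilde\mu \circ (\pi'\times\pi') \;:\; S'\times S' \longrightarrow S'.
\]
Associativity of $\mu'$ follows directly from $\pi'\circ\sigma'=\mathrm{id}_C$ together with the associativity of $\tilde\mu$ (which is itself inherited from $\mu$ via $\sigma$ a closed immersion and $\pi$ surjective). The identity $\varphi\circ\sigma'=\sigma$ yields
\[
\varphi\circ\mu' \;=\; \sigma\circ\tilde\mu\circ(\pi\times\pi)\circ(\varphi\times\varphi) \;=\; \mu\circ(\varphi\times\varphi),
\]
so $\varphi$ is a homomorphism, and $\mu'$ is non-trivial because it factors through the curve $\sigma'(C)\subsetneq S'$.

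For uniqueness, let $\mu''$ be any other non-trivial semigroup law on $S'$ such that $\varphi$ is a homomorphism. Then $\varphi\circ\mu''=\mu\circ(\varphi\times\varphi)=\varphi\circ\mu'$, and on the open subset
\[
U \;:=\; \bigl\{(s_1',s_2')\in S'\times S' \;:\; \mu(\varphi(s_1'),\varphi(s_2'))\neq P\bigr\},
\]
both $\mu'$ and $\mu''$ take values in $S'\setminus E$, where $\varphi$ is injective, so they must coincide on $U$. If $P\notin\sigma(C)$ then $U=S'\times S'$. If $P=\sigma(Q)\in\sigma(C)$, the complement of $U$ equals $(\pi'\times\pi')^{-1}(\tilde\mu^{-1}(Q))$; inspecting cases 2), 4), 5) of Remark 1, $\tilde\mu$ is non-constant in each, so $\tilde\mu^{-1}(Q)\subsetneq C\times C$ and $U$ is dense. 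Separation of $S'$ then gives $\mu'=\mu''$ globally. The principal obstacle I foresee is the construction of $\sigma'$ in the subcase $P\in\sigma(C)$, but it reduces to the standard fact that blowing up a smooth point of a smooth curve on a smooth surface yields a strict transform isomorphic to the original curve.
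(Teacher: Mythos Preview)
Your proposal is correct and follows essentially the same route as the paper: define $\pi'=\pi\circ\varphi$, lift $\sigma$ via the strict transform of $\sigma(C)$ (which is isomorphic to $\sigma(C)$ since the latter is smooth), apply formula~(\ref{law}) to $(\pi',\sigma')$, and verify $\varphi\circ\mu'=\mu\circ(\varphi\times\varphi)$ directly. Your uniqueness argument via density is a spelled-out version of the paper's one-line appeal to birationality of $\varphi$; the case split on whether $P\in\sigma(C)$ is unnecessary (the strict transform construction covers both), but harmless.
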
Now let us begin the proof of Proposition 1.4.
\begin{proof} First, we  construct a morphism from $S'$ to $C$, and a section of this morphism; then
we construct a non-trivial algebraic semigroup structure on $S'$, and verify that $\varphi$ is a
homomorphism. \\ Composing $\pi$ with $\varphi$, we get a morphism from $S'$ to $C$, denoted by
$\pi'=\pi\circ\varphi:S'\longrightarrow C$.\\ We now construct a section of $\pi'$. Since $\sigma$
maps $C$ isomorphically to its image $\sigma(C)$, in order to construct a morphism from $C$ to $S'$,
we consider the strict transform of $\sigma (C)$, and denote it by $C'$. \\Then
$\varphi_{|C'}:C'\longrightarrow \sigma(C)$  is the blowing-up of the point $P $ on $\sigma( C)$.
Since $\sigma(C)$ is smooth,  $\varphi_{|C'}$ is an isomorphism. Let
$\sigma'=\varphi_{|C'}^{-1}\circ\sigma$, then  $\sigma'$ is a morphism from $C$ to $S'$. Next let us
verify that $\sigma'$ is a section of $\pi'$ :\begin{equation}\label{2}
\pi'\circ\sigma'=\pi\circ\varphi\circ\sigma'=\pi\circ\sigma= id_C.\end{equation} Thus we construct a
semigroup law $\mu'$ on $S'$ by using the formula (1) in Remark 1). We now verify that $\varphi'$ is a
homomorphism of semigroups. Let  $s'_1,s'_2 \in S'$, then \begin{equation}
\xymatrix{\mu(\varphi(s'_1),\varphi(s'_2))
=\sigma(\tilde\mu(\pi\circ\varphi(s'_1),\pi\circ\varphi(s'_2)))
=\sigma(\tilde\mu(\pi'(s'_1),\pi'(s'_2)))}\end{equation}
and we also have the following equations:\begin{equation}
\xymatrix{\varphi(\mu'(s'_1,s'_2))=\varphi(\sigma'(\tilde\mu(\pi'(s'_1),\pi'(s'_2)))=\sigma(\tilde\mu(\pi'(s'_1),\pi(s'_2)))}.\end{equation}
So  we get $\mu(\varphi(s'_1),\varphi(s_2'))=\varphi(\mu'(s'_1,s'_2))$, which means that $\varphi$ is
a homomorphism of  semigroups.\\ Finally the uniqueness of the semigroup law making $\varphi$ a
homomorphism is due to the fact that  $\xymatrix{S'\ar[r]^\varphi &S}$ is birational. \end{proof}
We already know that blowing-up of  a point on $S$ preserves a non-trivial algebraic semigroup
structure, what happens if we perform a blowing-down? The next example shows that the non-trivial
semigroup structures are not necessarily preserved.
 \begin{example} Consider the blowing-down morphism:$$ \varphi:Proj_{\mathbb P^1}(\mathcal
 O\oplus\mathcal O(-1))\longrightarrow  \mathbb P^2. $$ We show that there exist non-trivial algebraic
 semigroup structures on \\$Proj_{\mathbb P^1}(\mathcal O\oplus\mathcal O(-1))$, but every algebraic
 semigroup structure on $ \mathbb P^2 $ is trivial. \\ Note that $Proj_{\mathbb P^1}(\mathcal
 O\oplus\mathcal O(-1))$ is ruled over $\mathbb P^1$ and the ruling morphism $\pi$ has sections. For
 each section, we can define a semigroup law on $Proj_{\mathbb P^1}(\mathcal O\oplus\mathcal O(-1))$
 as in Remark 1). For example, recall the projection morphism\\ \xymatrix{\mathcal O\oplus\mathcal
 O(-1)\ar[r]& \mathcal O\ar[r]&0} on $\mathbb P^1$ corresponds to a section:\\ \xymatrix{\sigma:
 \mathbb P^1\ar[r]& Proj_{\mathbb P^1}(\mathcal O\oplus\mathcal O(-1))}. We can thus define a
 non-trivial semigroup law $\mu$ on  $Proj_{\mathbb P^1}(\mathcal O\oplus\mathcal O(-1))$ by
 $\mu(x_1,x_2)=\sigma\circ \pi (x_1)$.
\\ Also recall that for an arbitrary smooth projective curve $C$, there are only constant morphisms
from $\mathbb P^2$ to $C$, which means that there will be no retraction from $\mathbb P^2$ to a curve.
It follows that the only possible algebraic semigroup structures  on $\mathbb P^2 $ are trivial.
 \end{example}\begin{proposition}\label{P2}Let  $(S,\mu)$ satisfy the same assumptions as in
 Proposition \ref{P1}. Let $\varphi: S\longrightarrow \tilde S$ denote the blowing-down of an
 exceptional rational curve $E$ on $S$. Furthermore, we require that $\pi(E)$ is a point $P$. Then
 there exists a unique non-trivial semigroup law $(\tilde S,\tilde \mu)$ such that $\varphi$ is a
 homomorphism.  \end{proposition}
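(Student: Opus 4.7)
The plan is to push the whole structure $(\pi,\sigma,\tilde\mu)$ down along $\varphi$, in parallel with Proposition \ref{P1}; the crucial use of the hypothesis ``$\pi(E)$ is a point'' will be to descend $\pi$ itself.

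\textbf{Step 1: descending $\pi$.} The first task is to construct a morphism $\pi_*:\tilde S\to C$ with $\pi_*\circ\varphi=\pi$. Set-theoretically such a map is forced: away from $E$ the morphism $\varphi$ is an isomorphism, while on $E$ we have $\pi(E)=\{P\}$ by assumption, so $\pi$ is constant on every fibre of $\varphi$. To upgrade this to a morphism I would invoke the rigidity property of the blow-down (equivalently $\varphi_*\mathcal O_S=\mathcal O_{\tilde S}$, which holds since $E$ is an exceptional $(-1)$-curve and $\tilde S$ is smooth), or apply the Stein factorisation of $\pi$: the exceptional curve $E$ is contained in a connected fibre of $\pi$, and $\tilde S$ is obtained from $S$ by contracting $E$, so $\pi$ factors uniquely through $\varphi$.

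\textbf{Step 2: descending $\sigma$ and defining $\tilde\mu$.} Let $\sigma_*:=\varphi\circ\sigma:C\to\tilde S$. Then
\[
\pi_*\circ\sigma_*=\pi_*\circ\varphi\circ\sigma=\pi\circ\sigma=\mathrm{id}_C,
\]
so $\sigma_*$ is a section of $\pi_*$; in particular $\sigma_*$ is a closed immersion. Using formula (\ref{law}) of Remark 1), I define
\[
\tilde\mu(\tilde s_1,\tilde s_2):=\sigma_*\bigl(\tilde\mu_C(\pi_*(\tilde s_1),\pi_*(\tilde s_2))\bigr),
\]
where $\tilde\mu_C$ is the same semigroup law on $C$ as the one associated with $(S,\mu)$ (group law on $A$ in case 2), left or right projection in cases 4)--5)). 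Associativity of $\tilde\mu$ follows formally from associativity of $\tilde\mu_C$, so $(\tilde S,\tilde\mu)$ is a non-trivial algebraic semigroup.

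\textbf{Step 3: $\varphi$ is a homomorphism.} A direct computation, using $\pi_*\circ\varphi=\pi$ and $\varphi\circ\sigma=\sigma_*$, gives for all $s_1,s_2\in S$:
\begin{align*}
\tilde\mu\bigl(\varphi(s_1),\varphi(s_2)\bigr)
&=\sigma_*\bigl(\tilde\mu_C(\pi(s_1),\pi(s_2))\bigr)\\
&=\varphi\bigl(\sigma(\tilde\mu_C(\pi(s_1),\pi(s_2)))\bigr)
 =\varphi(\mu(s_1,s_2)).
\end{align*}

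\textbf{Step 4: uniqueness.} If $\tilde\mu'$ is any other semigroup law on $\tilde S$ for which $\varphi$ is a homomorphism, then $\tilde\mu$ and $\tilde\mu'$ coincide on the dense open subset $\varphi(S\setminus E)\times\varphi(S\setminus E)\subset\tilde S\times\tilde S$; by separatedness of $\tilde S$ they agree everywhere.

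The main obstacle is Step 1: without the hypothesis $\pi(E)=\{P\}$, the morphism $\pi$ would not be constant on the fibre $E$ of $\varphi$, and the descended fibration $\pi_*$ simply would not exist; everything else is formal manipulation of the basic formula (\ref{law}), directly modelled on the proof of Proposition \ref{P1}.
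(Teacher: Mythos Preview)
Your proof is correct and follows essentially the same route as the paper: descend $\pi$ through $\varphi$ using $\varphi_*\mathcal O_S=\mathcal O_{\tilde S}$ (the paper spells this out at the level of ringed spaces, you phrase it via rigidity/Stein factorisation), set $\tilde\sigma=\varphi\circ\sigma$, define $\tilde\mu$ by formula~(\ref{law}), and check the homomorphism and uniqueness conditions formally. The only cosmetic difference is the order in which $\tilde\pi$ and $\tilde\sigma$ are introduced.
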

Now let us begin the proof of Proposition 1.5.
\begin{proof}First we construct a morphism from $C$ to $\tilde S$, then we construct a contraction of
this morphism. After defining a non-trivial semigroup structure on $\tilde S$, we verify that
$\varphi$ is a homomorphism.\\
Composing  $\varphi$ with $\sigma$, we get a morphism $\tilde
\sigma=\varphi\circ\sigma:C\longrightarrow \tilde S$.\\
 Since $\varphi: S-E \longrightarrow \tilde S-P $ is an isomorphism,  there is a set-theoretical map
 $\tilde \pi$ from $\tilde S$ to $C$ satisfying $\pi=\tilde \pi\circ\varphi$.\\
Let us verify that $\tilde \pi$ is a morphism. It is easy to see $\tilde \pi$ is continuous, because
the preimage of any finite set is closed in $\tilde S$. We now define $\tilde \pi ^ {\sharp}: \mathcal
O_C\longrightarrow \tilde \pi_*\mathcal O_{\tilde S}$. For any open subset $U$ of $C$, there is a ring
homomorphism $\pi_{U}^{\sharp}:\mathcal O_C(U)\longrightarrow \mathcal O_S(\pi^{-1}(U))$. Since
$\varphi_*\mathcal O_S=\mathcal O_{\tilde S}$,  there is an isomorphism $\varphi_{\tilde
\pi^{-1}(U)}^{\sharp}:\mathcal O_{\tilde S}(\tilde \pi^{-1}(U))\longrightarrow \mathcal
O_S(\pi^{-1}(U))$. We define $\tilde \pi ^ {\sharp}_U: \mathcal O_C (U)\longrightarrow \mathcal
O_{\tilde S}(\tilde \pi^{-1}(U))$ by $\tilde \pi ^ {\sharp}_U=\varphi_{\tilde
\pi^{-1}(U)}^{\sharp-1}\circ \pi_{U}^{\sharp}$. Then $(\tilde \pi,\tilde \pi^{\sharp})$ is a morphism
of ringed spaces, so we get a morphism $\tilde \pi$ from $\tilde S$ to $C$.\\
 Now we verify $\tilde \sigma$ is a section of $\tilde \pi$: \begin{equation} \tilde \pi\circ\tilde
 \sigma=\tilde \pi\circ\varphi\circ\sigma=\pi\circ\sigma=id_C.\end{equation}\\ Thus we have
 constructed a non-trivial  algebraic semigroup structure $\tilde \mu$ on $\tilde S$ as in Remark
 1).\\ Let us verify that $\varphi$ is a homomorphism of semigroups: for any $s_1,s_2\in S$,
 \begin{equation}
\varphi(\mu(s_1,s_2))=\varphi\circ\sigma(\tilde\mu(\pi(s_1),\pi(s_2))) \end{equation}and
\begin{equation}\tilde \mu(\varphi(s_1),\varphi(s_2))=\tilde \sigma\circ\tilde\mu(\tilde
\pi\circ\varphi(s_1),\tilde \pi\circ\varphi(s_2)). \end{equation}
Since \begin{equation} \tilde \pi\circ\varphi=\pi  \end{equation}and \begin{equation}\tilde
\sigma=\varphi\circ\sigma \end{equation}we have $\mu(\varphi(s_1),\varphi(s_2))=\varphi(\tilde
\mu(s_1,s_2))$, which means that $\varphi$ is a homomorphism.\\Finally, $S$ and $\tilde S$ are
birationally equivalent, so there is a unique non-trivial algebraic semigroup structure on $\tilde S$
such that $\varphi$ is a homomorphism.\end{proof}
In view of the last two propositions, when $g(C)\ge 1$, we can can reduce our problem to the case
where $S$ is minimal, the case $g(C)=0$ need further study. But whatever in what follows, we always
assume $S$ is minimal.
\section{The case $\kappa(S)=-\infty$}In this section, we always assume  $\kappa(S)=-\infty$, and our
aim  is to prove:

\begin{theorem}[Main theorem]\label {M1}
$$\textrm{If}\xymatrix{S\ar[r]|\pi &C\ar@/_/@{>}[l]|\sigma}. $$\begin{enumerate}[1)]
\item If $g(C)\ge 1$,  then $\pi$ is a ruling morphism.
\item If $g(C)=0$ and the general fibre of $\pi$ is rational,\\then $S\simeq Proj_{\mathbb
    P^1}(\mathcal O\oplus\mathcal O(-d))  $ for some $d\ne 1$ and $\pi$ is the ruling morphism.
\item If $g(C)=0$, i.e. $C\simeq \mathbb P^1$ and the general fibre of $\pi$ is not rational,\\then
    $S\simeq \mathbb P^1\times X$, where X is a projective smooth curve satisfying $g(X)\ge1$ and
    $\pi=pr_{1}$ is the first projection to $\mathbb P^1$.\end{enumerate}\end{theorem}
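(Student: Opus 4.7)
The plan is to exploit the Enriques--Kodaira classification: a minimal smooth projective surface $S$ with $\kappa(S)=-\infty$ is either $\mathbb{P}^2$ or a geometrically ruled surface $\rho\colon S\to B$ over a smooth projective curve $B$. The case $S=\mathbb{P}^2$ is immediately excluded, since any morphism from $\mathbb{P}^2$ to a curve is constant on each line, hence constant. So in all three parts I may assume $\rho\colon S\to B$ is a $\mathbb{P}^1$-bundle, and the whole classification reduces to comparing the given fibration $\pi$ with this ruling $\rho$.

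For part~(1), where $g(C)\ge 1$, every morphism $\mathbb{P}^1\to C$ is constant; in particular $\pi$ is constant on each $\rho$-fibre, and by the rigidity lemma it factors as $\pi=\psi\circ\rho$ for a morphism $\psi\colon B\to C$. Connectedness of the fibres of $\pi$ forces $\psi$ to have singleton fibres, so $\psi$ is an isomorphism of smooth projective curves, and $\pi$ is the ruling (after identifying $C$ with $B$). For part~(2), $\pi\colon S\to\mathbb{P}^1$ is itself a $\mathbb{P}^1$-fibration. If $g(B)\ge 1$, the same rigidity argument would force $\pi$ to factor through $\rho$ and hence $g(C)=g(B)\ge 1$, contradicting $g(C)=0$. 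So $g(B)=0$, making $S$ a Hirzebruch surface $\mathbb{F}_d=\mathrm{Proj}_{\mathbb{P}^1}(\mathcal{O}\oplus\mathcal{O}(-d))$; minimality excludes $d=1$, and a direct numerical check on $\mathrm{Pic}(\mathbb{F}_d)$ shows that the ruling (or rulings, when $d=0$) is the only $\mathbb{P}^1$-fibration of $\mathbb{F}_d$.

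For part~(3), where $C=\mathbb{P}^1$ and a general fibre $F$ of $\pi$ has $g(F)\ge 1$, I would first rule out $S=\mathbb{F}_d$: on a Hirzebruch surface the only irreducible divisor classes with self-intersection zero are rational (the ruling fibres), so no fibration with non-rational general fibre exists. Hence $S$ is a $\mathbb{P}^1$-bundle $\rho\colon S\to B$ with $g(B)\ge 1$. Then $\rho\circ\sigma\colon\mathbb{P}^1\to B$ is constant, so $\sigma(C)$ coincides with a single $\rho$-fibre $\rho^{-1}(b_0)$, and the intersection identity
\[
1=F\cdot\sigma(C)=F\cdot\rho^{-1}(b_0)
\]
implies that $\rho|_F\colon F\to B$ has degree one, hence is an isomorphism; so every $\pi$-fibre is a section of $\rho$. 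To conclude, I would examine $(\rho,\pi)\colon S\to B\times\mathbb{P}^1$, a morphism of $\mathbb{P}^1$-bundles over $B$ whose restriction to each $\rho$-fibre has degree $\rho^{-1}(b)\cdot F=1$ and is therefore an isomorphism $\mathbb{P}^1\to\{b\}\times\mathbb{P}^1$. Consequently $(\rho,\pi)$ itself is an isomorphism, identifying $S$ with $B\times\mathbb{P}^1$ and $\pi$ with the projection to $\mathbb{P}^1$; setting $X:=B$ yields the desired description with $g(X)\ge 1$.

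The most delicate step I expect is the final one in part~(3): passing from the intersection-theoretic statement ``a general $\pi$-fibre is a section of $\rho$'' to the global product structure on $S$ compatible with $\pi$. The remaining arguments are essentially direct consequences of Enriques--Kodaira together with the rigidity afforded by the absence of non-constant morphisms from $\mathbb{P}^1$ to a curve of positive genus.
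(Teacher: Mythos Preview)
Your argument is correct, and for part~(3) it is genuinely different from---and considerably shorter than---the paper's proof. The paper establishes $f_0\cdot f=1$ (where $f_0$ is the class of a general $\pi$-fibre and $f$ that of a $\rho$-fibre) by a detailed analysis of the cone $NE(S)$: it splits into the cases $e\ge 0$ and $e<0$, invokes the explicit description of irreducible and ample classes on a ruled surface (Lemmas~\ref{5} and~\ref{6}), and in the case $e<0$ must appeal to Nagata's bound $e\ge -g(X)$ together with the genus formula to exclude the possibility $\sigma(\mathbb{P}^1)\not\sim f$. Your route bypasses all of this with a single observation the paper never makes explicit: since $g(B)\ge 1$, the composite $\rho\circ\sigma\colon\mathbb{P}^1\to B$ is constant, so $\sigma(\mathbb{P}^1)$ is literally a $\rho$-fibre, and $f_0\cdot f=f_0\cdot\sigma(\mathbb{P}^1)=1$ is immediate from $\sigma$ being a section. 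This is a real simplification; the price is only that one does not obtain the incidental information about $NE(S)$ that the paper records along the way.

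For part~(1) the two proofs also diverge: the paper applies Iitaka's inequality $\kappa(F)+\kappa(C)\le\kappa(S)$ directly to the fibration $\pi$ to force $\kappa(F)=-\infty$, whereas you start from the ruling $\rho$ guaranteed by Enriques--Kodaira and use rigidity to identify $\pi$ with $\rho$ up to an isomorphism of the base. Both are short; yours has the mild advantage of not importing the $C_{2,1}$ theorem. Part~(2) is handled essentially the same way in both. Your closing remark that the ``delicate step'' is the passage to the global product structure is perhaps overly cautious: once $f_0\cdot f=1$, the map $(\rho,\pi)$ is a degree-one morphism of $\mathbb{P}^1$-bundles over $B$, and the paper's endgame (showing $(\pi,\rho)$ contracts no curve, hence is finite of degree one) goes through identically.
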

The parts 1) and 2) of  Theorem \ref {M1} are more or less trivial. In order to prove 3), we  analyze
the cone of curves of $S$, $NE(S)$. To be more precise, we show that $NE(S)$ is two-dimensional, and
give an explicit description of the extremal rays of this convex cone.
Before proving Theorem \ref{M1}, we need some general facts about ruled surfaces.
 \begin{definition}{(definition and notation)}\begin {enumerate}[a)]\item (definition of normalised
 sheaf).\\If $\pi:S\longrightarrow C$ is a ruled surface, then there exists a rank two locally free
 sheaf $\mathcal E$ on $C$ s.t. $S\simeq \mathbb P_C(\mathcal E)$, $H^0(\mathcal E)\ne 0$ and for an
 arbitrary invertible sheaf $\mathcal L$ with $deg(\mathcal L)<0$ on $C$, $H^0(\mathcal E\otimes
 \mathcal L)=0$ (the existence of such $\mathcal E$ can be found in ~\cite {Hartshone}, Chapter 5,
 Proposition 2.8).We call such $\mathcal E$ normalised. \item (definition of invariant $e$)\\For any
 normalised sheaf $\mathcal E$, we define $e$ by \begin{equation} e=-deg(\mathcal E)=-deg(\bigwedge^2
 \mathcal E). \end{equation}This number is a well-defined invariant of $S$, not depending on the
 $\mathcal E$ we choose.\\(The proof that $e$ is a well-defined invariant, can be found in ~\cite
 {Hartshone}, Chapter 5, Proposition 2.8).\item (some notation)\\In $Pic(S)$, we let $C_0$ denote the
 linear equivalence class of $\mathcal O_{\mathbb P_C(\mathcal E)}(1)$. And we let $f$ denote the
 numerical equivalence class of any general fibre of $\pi$. \end{enumerate}\end{definition}
 We quote the following lemmas without proof, they can be found in the book ~\cite{Hartshone},
 Proposition $2.6$, Proposition $2.20$ and Proposition $2.21$. We will use these lemmas to describe
 $NE(S)$.
\begin {lemma}Let $S\simeq \mathbb P_C(\mathcal E)$ be a ruled surface over $C$, for some  normalised
sheaf $\mathcal E$. Then there is a one-to-one correspondence between sections of $\pi$ and
surjections $\mathcal E\longrightarrow \mathcal L\longrightarrow 0$, where $\mathcal L$ is an
invertible sheaf on $C$.
There exists a  section $\sigma$ such that  $\sigma(C)=\mathcal O_{\mathbb P_C(\mathcal E)}(1)$ in
$Pic(S)$ ($\sigma$ may not be unique), $\sigma$ is called a canonical section of $\pi$.\end{lemma}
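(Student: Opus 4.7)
The plan is to exploit the universal property of the projective bundle $\mathbb P_C(\mathcal E)=\mathrm{Proj}_C(\mathrm{Sym}\,\mathcal E)$: for any $C$-scheme $f\colon T\to C$, a $C$-morphism $T\to\mathbb P_C(\mathcal E)$ is the same data as an invertible quotient $f^{*}\mathcal E\twoheadrightarrow\mathcal L\to 0$. Specializing to $T=C$ and $f=\mathrm{id}_C$ immediately produces a bijection between sections $\sigma$ of $\pi$ and surjections $\mathcal E\twoheadrightarrow\mathcal L\to 0$; the correspondence sends $\sigma$ to $\mathcal L=\sigma^{*}\mathcal O_{\mathbb P_C(\mathcal E)}(1)$. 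That takes care of the first assertion.

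For the existence of a \emph{canonical} section, the idea is to exploit $H^{0}(C,\mathcal E)\ne 0$. Pick a nonzero morphism $s\colon\mathcal O_C\to\mathcal E$; since $\mathcal O_C$ is a line bundle on the smooth curve $C$ and $\mathcal E$ is locally free, $s$ is automatically injective. The subsheaf $s(\mathcal O_C)\subset\mathcal E$ might fail to be saturated, but I would argue this cannot happen here: if the saturation were $\mathcal O_C(D)\subset\mathcal E$ with $D>0$ effective, tensoring with $\mathcal O_C(-D)$ would give a nonzero section of $\mathcal E\otimes\mathcal O_C(-D)$, contradicting the normalisation hypothesis $H^{0}(\mathcal E\otimes\mathcal M)=0$ for any $\mathcal M$ of negative degree. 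Hence $\mathcal E/s(\mathcal O_C)$ is torsion-free, thus invertible, and one has an exact sequence
\[
0\longrightarrow\mathcal O_C\longrightarrow\mathcal E\longrightarrow\mathcal L\longrightarrow 0.
\]
By the first part, the surjection $\mathcal E\twoheadrightarrow\mathcal L$ determines a section $\sigma$ with $\sigma^{*}\mathcal O_{\mathbb P_C(\mathcal E)}(1)=\mathcal L$.

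To close the argument I need $\sigma(C)\sim C_{0}$ in $Pic(S)$. Using the splitting $Pic(S)=\mathbb Z\cdot C_{0}\oplus\pi^{*}Pic(C)$, write $\sigma(C)\equiv a\,C_{0}+\pi^{*}\mathfrak d$. Intersecting with a general fibre $f$ and using $\sigma(C)\cdot f=1$, $C_{0}\cdot f=1$, $\pi^{*}\mathfrak d\cdot f=0$ forces $a=1$. To pin down $\mathfrak d$, pull the relation $\sigma(C)\equiv C_{0}+\pi^{*}\mathfrak d$ back by $\sigma$: on one hand $\sigma^{*}\sigma(C)$ is the normal bundle of the image, which for the section associated to $\mathcal E\twoheadrightarrow\mathcal L$ with kernel $\mathcal M$ equals $\mathcal M^{\vee}\otimes\mathcal L$, i.e.\ $\mathcal L$ in our case; on the other hand $\sigma^{*}(C_{0}+\pi^{*}\mathfrak d)=\mathcal L+\mathfrak d$. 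Comparing gives $\mathfrak d=0$, hence $\sigma(C)\sim C_{0}$. The main obstacle in this programme is the saturation step in the middle paragraph, where the full force of the normalisation hypothesis has to be invoked to exclude hidden torsion in $\mathcal E/s(\mathcal O_C)$; the bijection and the final Chow-group computation are then essentially formal.
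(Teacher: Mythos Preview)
Your argument is correct. The paper does not actually prove this lemma: it is quoted without proof together with the two subsequent lemmas, with a reference to Hartshorne, Chapter~V, Propositions~2.6, 2.20 and~2.21. Your proof is essentially the standard one given there---the universal property of $\mathbb P_C(\mathcal E)$ for the bijection, the normalisation hypothesis to force the sub-line-bundle $\mathcal O_C\hookrightarrow\mathcal E$ to be saturated, and the normal-bundle computation $\mathcal N_{\sigma(C)/S}\cong\mathcal M^{\vee}\otimes\mathcal L$ (with $\mathcal M=\mathcal O_C$ here) to identify $\mathfrak d$---so there is nothing to add.
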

 \begin{lemma}If $\pi:S\longrightarrow C$ is a ruled surface, then \\a) $Pic(S)=\mathbb Z C_0\oplus
 \pi ^* Pic(C) $. \\ b) The self-intersection number satisfies $C_0^2=-e$.\\
 c) The canonical line bundle of $S$ satisfies $K_S\sim_{num}-2C_0+(2g(C)-2-e)f$.\end{lemma}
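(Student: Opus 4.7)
The plan is to prove each of the three parts by direct computation, using the projective bundle structure $S \simeq \mathbb{P}_C(\mathcal{E})$ together with the tautological quotient $\pi^*\mathcal{E} \twoheadrightarrow \mathcal{O}_S(1)$ that gives the class $C_0$. Throughout, I fix a normalised $\mathcal{E}$ of rank $2$ so that $e = -\deg\mathcal{E}$.

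For part (a), I first show surjectivity of the map $\mathbb{Z}\cdot C_0 \oplus \pi^*\mathrm{Pic}(C)\to \mathrm{Pic}(S)$. Given any $L\in\mathrm{Pic}(S)$, I restrict it to a general fibre $F\simeq\mathbb{P}^1$ and obtain $L|_F\simeq\mathcal{O}_F(n)$ for some $n\in\mathbb Z$. The twisted sheaf $L' = L\otimes\mathcal{O}_S(-nC_0)$ then has degree $0$ on every fibre, so by flatness $L'$ is fibrewise trivial. Applying cohomology and base change together with $\pi_*\mathcal{O}_S=\mathcal{O}_C$ (projective bundle), I conclude $\pi_*L'$ is a line bundle $M$ on $C$ and the adjunction morphism $\pi^*M\to L'$ is an isomorphism on each fibre, hence an isomorphism. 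For directness, I intersect with $f$: since $C_0\cdot f=1$ while $\pi^*M\cdot f=0$ for any $M\in\mathrm{Pic}(C)$, a relation $nC_0+\pi^*M\sim 0$ forces $n=0$ and then $\pi^*M\sim 0$, which gives $M\sim 0$ by injectivity of $\pi^*$ (which holds because $\pi$ admits a section).

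For part (b), I use Lemma 2.3: the canonical section $\sigma$ corresponds to a surjection $\mathcal{E}\twoheadrightarrow\mathcal{L}\to 0$ where $\sigma(C)\sim C_0$ in $\mathrm{Pic}(S)$ and $\sigma^*\mathcal{O}_S(1)\simeq \mathcal{L}$. Writing the exact sequence $0\to\mathcal{M}\to\mathcal{E}\to\mathcal{L}\to 0$ with $\mathcal{M}$ an invertible sheaf, the normalisation hypothesis on $\mathcal{E}$ forces $H^0(\mathcal{E})\neq 0$, and choosing the canonical section so that the corresponding quotient satisfies $\deg\mathcal{M}=0$ and $\deg\mathcal{L}=\deg\mathcal{E}=-e$ (this is where the invariant $e$ enters), I obtain $C_0^2 = \sigma(C)\cdot\sigma(C) = \deg\sigma^*\mathcal{O}_S(C_0) = \deg\mathcal{L} = -e$.

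For part (c), I use the relative Euler sequence for the projective bundle $\pi$,
\[
0\longrightarrow \Omega_{S/C}\otimes\mathcal{O}_S(1)\longrightarrow \pi^*\mathcal{E}\longrightarrow \mathcal{O}_S(1)\longrightarrow 0,
\]
from which taking determinants yields the relative dualising sheaf $\omega_{S/C}\simeq \mathcal{O}_S(-2)\otimes \pi^*\det\mathcal{E}$. Combining with $K_S\simeq \omega_{S/C}\otimes \pi^*K_C$ and translating into the basis of part (a), I get $K_S\sim -2C_0+\pi^*(\det\mathcal{E}+K_C)$. Passing to numerical equivalence with $\pi^*N\sim_{\mathrm{num}}(\deg N)f$ and $\deg\det\mathcal{E}=-e$, $\deg K_C=2g(C)-2$, this becomes $K_S\sim_{\mathrm{num}}-2C_0+(2g(C)-2-e)f$, as claimed. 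The main subtlety is getting the conventions right: in Hartshorne's convention $\mathbb{P}(\mathcal{E})=\mathrm{Proj}(\mathrm{Sym}\,\mathcal{E})$ (quotients), the sign in the Euler sequence and hence in $\omega_{S/C}$ must match the identification $C_0=\mathcal{O}_S(1)$, which is where an error would most easily creep in.
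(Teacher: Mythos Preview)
The paper does not actually prove this lemma: immediately before stating it (together with the two lemmas that follow), the author writes ``We quote the following lemmas without proof, they can be found in the book~\cite{Hartshone}, Proposition~2.6, Proposition~2.20 and Proposition~2.21.'' So there is no proof in the paper to compare against; the result is simply imported from Hartshorne.

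Your argument is correct and is essentially the standard proof one finds in Hartshorne, Chapter~V, \S2. A couple of minor remarks on part~(b): the phrase ``choosing the canonical section so that $\deg\mathcal{M}=0$'' slightly undersells what is happening. The point is that normalisation gives a nonzero section $\mathcal{O}_C\hookrightarrow\mathcal{E}$, and this inclusion is automatically \emph{saturated}: if it were not, its saturation would be $\mathcal{O}_C(D)$ for some effective $D>0$, yielding a nonzero section of $\mathcal{E}\otimes\mathcal{O}_C(-D)$ with $\deg\mathcal{O}_C(-D)<0$, contradicting the normalisation hypothesis. Hence the quotient $\mathcal{L}$ is genuinely a line bundle with $\deg\mathcal{L}=\deg\mathcal{E}=-e$, and the rest of your computation $C_0^2=\deg\sigma^*\mathcal{O}_S(1)=\deg\mathcal{L}=-e$ goes through. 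Parts~(a) and~(c) are fine as written.
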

 \begin{lemma}\label{5}If $\pi:S\longrightarrow C$ is a ruled surface, with $e\ge 0$
 then:\begin{enumerate}[a)]\item For any irreducible curve $Y$ on $S$, $Y$ is numerically equivalent
 to $a C_0+b f$, for some $a,b\in \mathbb Z$. If $Y$ is not numerically equivalent to $C_0$ or $ f$,
 then  $a>0$ , $b\ge ae$.
 \item A divisor $D\sim_{num}aC_0+bf$ is ample if and only if $a>0$ and $b>ae$.\end{enumerate}
\end{lemma}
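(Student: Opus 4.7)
My plan is to deduce part a) from the structure of $Pic(S)$ (Lemma 2.4) combined with a short case analysis of how an irreducible curve $Y$ meets the fibres of $\pi$, and then to derive part b) from the Nakai--Moishezon criterion, using part a) to enumerate all irreducible curves up to numerical equivalence.

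For part a), I first write $Y\sim aC_0+\pi^*L$ in $Pic(S)$ via Lemma 2.4 a); since $\pi^*L\cdot f=0$ and $\pi^*L\cdot C_0=\deg L$, the class of $\pi^*L$ numerically reduces to $(\deg L)\,f$, giving $Y\sim_{num}aC_0+bf$ for some $a,b\in\mathbb Z$. To pin down signs I split into two cases. If $\pi(Y)$ is a point then $Y$ lies inside a fibre, and because the fibres of a $\mathbb P^1$-bundle are irreducible copies of $\mathbb P^1$ we must have $Y\sim_{num}f$. Otherwise $\pi|_Y:Y\to C$ is a finite surjection of degree $Y\cdot f=a$, so $a>0$. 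To obtain $b\ge ae$ when $Y\not\sim_{num}C_0$, I fix a canonical section $\sigma$ from Lemma 2.3, whose image is an irreducible curve in the class $C_0$. Then $Y$ and $\sigma(C)$ are distinct irreducible curves, so $Y\cdot\sigma(C)\ge 0$; expanding the intersection $(aC_0+bf)\cdot C_0$ using $C_0^2=-e$ and $C_0\cdot f=1$, $f^2=0$ from Lemma 2.4 b) yields exactly $b-ae\ge 0$.

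For part b), the forward direction is immediate from $D\cdot f=a>0$ and $D\cdot C_0=b-ae>0$ when $D$ is ample. For the converse, assume $a>0$ and $b>ae$. Then $D^2=a(2b-ae)>a\cdot ae\ge 0$, using $e\ge 0$. For any irreducible curve $Y\sim_{num}a'C_0+b'f$ we have $D\cdot Y=a'(b-ae)+ab'$; by part a) this splits into the cases $Y\sim_{num}f$ (giving $D\cdot Y=a>0$), $Y\sim_{num}C_0$ (giving $D\cdot Y=b-ae>0$), or $a'>0$ and $b'\ge a'e\ge 0$ (in which case both summands are non-negative and the first is strictly positive). Nakai--Moishezon then delivers ampleness. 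The main delicate point is the non-negativity $Y\cdot\sigma(C)\ge 0$ in part a), which hinges crucially on the hypothesis $e\ge 0$: it is precisely this assumption that guarantees $C_0$ is represented by a genuine irreducible section with $C_0^2=-e\le 0$, so that no other irreducible curve can meet it negatively. Once this geometric input is secured, the rest is direct bookkeeping with the intersection form on $\mathbb Z C_0\oplus\mathbb Z f$.
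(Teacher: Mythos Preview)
Your proof is correct. The paper itself does not prove this lemma: it is quoted without argument from Hartshorne, Chapter~V, Propositions 2.20 and 2.21, so there is no in-paper proof to compare against. Your route is essentially the standard one found there---use $Y\cdot f\ge 0$ and $Y\cdot C_0\ge 0$ against the canonical section to pin down $a,b$, then run Nakai--Moishezon for part b).

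One small correction to your closing commentary: the inequality $Y\cdot\sigma(C)\ge 0$ holds for any two distinct irreducible curves, and the canonical section representing $C_0$ exists for every $e$ (Lemma 2.3), so the hypothesis $e\ge 0$ is not what secures that step. Where $e\ge 0$ genuinely enters your argument is in part b): you need $a'e\ge 0$ to conclude $b'\ge 0$ in the third case, and you need $ae\ge 0$ in the chain $D^2=a(2b-ae)>a\cdot ae\ge 0$. In part a) the role of $e\ge 0$ is only that it makes the bound $b\ge ae$ a non-trivial constraint; for $e<0$ the same intersection with $C_0$ gives a bound that is too weak, which is why Lemma 2.6 requires a separate treatment. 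This does not affect the validity of your proof, only the diagnosis of where the hypothesis is doing work.
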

\begin{lemma}\label{6}If $\pi:S\longrightarrow C$ is a ruled surface, with $e\less 0$ then
:\begin{enumerate}[a)]\item For any irreducible curve $Y$ on $S$, $Y$ is numerically equivalent to $a
C_0+b f$ for some $a,b\in \mathbb Z$. If $Y$ is not numerically equivalent to $C_0$ or $ f$, then
either $a=1$, $b>0$ or $a\ge2$, $2b\ge a e$.\item A divisor $D\sim_{num}a C_0+ b f$ is ample if and
only if $a>0$, $2b>a e$. \end{enumerate}\end {lemma}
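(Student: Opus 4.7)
The plan is to reduce both assertions to intersection-theoretic calculations in the basis $(C_0,f)$ of $\mathrm{Num}(S)_{\mathbb Q}$, via the preceding lemmas. The intersection form reads $C_0^2=-e$, $C_0\cdot f=1$, $f^2=0$, so that for $D\equiv aC_0+bf$ one has $D^2=a(2b-ae)$, $D\cdot f=a$ and $D\cdot C_0=b-ae$; in particular a general intersection is $(aC_0+bf)\cdot(a'C_0+b'f)=-aa'e+ab'+a'b$.

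For part (a), let $Y\equiv aC_0+bf$ be an irreducible curve not numerically equivalent to $C_0$ or $f$. Intersecting with the irreducible fibre $f$ gives $a=Y\cdot f\ge 0$, with equality forcing $Y$ into a fibre and hence $Y\equiv f$, a contradiction; so $a\ge 1$. When $a=1$, $Y$ is a section, corresponding by the preceding lemma to a surjection $\mathcal E\twoheadrightarrow\mathcal L$; one computes $\deg\mathcal L=Y\cdot C_0=b-e$, and the kernel $\mathcal M$ in $0\to\mathcal M\to\mathcal E\to\mathcal L\to 0$ is an invertible subsheaf of $\mathcal E$, so the normalization of $\mathcal E$ forces $\deg\mathcal M\le 0$, whence $\deg\mathcal L\ge -e$ and $b\ge 0$; the case $b=0$ is ruled out by $Y\not\equiv C_0$, so $b>0$. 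When $a\ge 2$, the bound $2b\ge ae$ is equivalent to $Y^2\ge 0$, which I would derive as follows: in the regime $e<0$, every invertible subsheaf of the normalized bundle $\mathcal E$ has degree $\le 0<-e/2=\mu(\mathcal E)$, so $\mathcal E$ is semistable, and in characteristic zero the symmetric power $S^a\mathcal E$ remains semistable. By the projection formula, $Y$ corresponds to a nonzero global section of $S^a\mathcal E\otimes\mathcal L_0$ where $\deg\mathcal L_0=b$; this bundle is semistable of slope $(2b-ae)/2$, and the existence of a nonzero section (equivalently, an embedding $\mathcal O_C\hookrightarrow S^a\mathcal E\otimes\mathcal L_0$) forces its slope to be non-negative, yielding $2b\ge ae$.

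For part (b), apply the Nakai-Moishezon criterion: $D\equiv aC_0+bf$ is ample if and only if $D^2>0$ and $D\cdot Y>0$ for every irreducible curve $Y$. Necessity is immediate: $D\cdot f=a>0$ and $D^2=a(2b-ae)>0$ give $a>0$ and $2b>ae$. For sufficiency, $D\cdot f=a>0$ and $D\cdot C_0=b-ae>0$ (the latter since $2b>ae$ and $e<0$ imply $b>ae/2>ae$) are clear, while for any other irreducible $Y\equiv a'C_0+b'f$ (with $a'\ge 1$ by part (a)) the identity
\[
D\cdot Y \;=\; \tfrac{a}{2}(2b'-a'e)\;+\;\tfrac{a'}{2}(2b-ae)
\]
exhibits $D\cdot Y$ as a sum of two non-negative terms, the second strictly positive, the first non-negative by part (a) (with $2b'-a'e=2b'-e>0$ when $a'=1$, $b'>0$, and $2b'-a'e\ge 0$ when $a'\ge 2$).

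The main obstacle is the inequality $2b\ge ae$ in part (a) for $a\ge 2$: the naive bound $Y\cdot C_0\ge 0$ yields only $b\ge ae$, which is strictly weaker than what is required, and the gap is bridged only by invoking semistability of $\mathcal E$ and its preservation under symmetric powers, features specific to the $e<0$ regime.
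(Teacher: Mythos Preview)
Your argument is correct. Note, however, that the paper does not actually prove this lemma: it is quoted without proof from Hartshorne (Chapter~V, Propositions~2.20 and~2.21), so there is no in-paper proof to compare against.

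That said, your route is one of the standard ones and is somewhat more conceptual than Hartshorne's original. For part~(a) with $a=1$ you use the section/quotient correspondence and the normalization condition exactly as Hartshorne does. For $a\ge 2$, Hartshorne argues more directly with linear systems, whereas you observe that $e<0$ forces every invertible subsheaf of $\mathcal E$ to have degree $\le 0<\mu(\mathcal E)$, so $\mathcal E$ is semistable, and then invoke preservation of semistability under symmetric powers (valid in characteristic~$0$) to bound the slope of $S^a\mathcal E\otimes\mathcal L_0$ from below by zero. This is a clean way to obtain $2b\ge ae$ and makes transparent why the case $e<0$ behaves differently from $e\ge 0$. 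For part~(b) your use of Nakai--Moishezon together with the decomposition
\[
D\cdot Y=\tfrac{a}{2}(2b'-a'e)+\tfrac{a'}{2}(2b-ae)
\]
is the standard verification; one small omission is that you should also record $D^2=a(2b-ae)>0$ in the sufficiency direction, though this is immediate from the hypotheses $a>0$ and $2b>ae$.
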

The next lemma is a version of the ``Rigidity lemma''. (See Lemma 1.15, ~\cite {Olivier Debarre} )For
convenience, we give a detailed proof in the following.
\begin{lemma}{(Rigidity lemma)}\\ Assume that there are two fibrations $\pi_1:S\longrightarrow C_1$,
$\pi_2:S\longrightarrow C_2$ where $C_1$, $C_2$ are smooth curves.  If $\pi_2$ contracts one fibre
$F_1$ of $\pi_1$, then it contracts all fibres of $\pi_1$  and there exists an isomorphism
$\theta:C_1\longrightarrow C_2$, s.t. $\pi_2=\theta\circ\pi_1$, which means $\pi_1$ and $\pi_2$ are
isomorphic as fibrations.  \end{lemma}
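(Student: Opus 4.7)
The plan is to analyze the joint morphism $\phi = (\pi_1,\pi_2) : S \to C_1 \times C_2$ and its image $W = \phi(S)$. Since $S$ is projective and irreducible, $W$ is a closed irreducible subvariety of $C_1\times C_2$, and the goal is to show that $W$ is the graph of the desired isomorphism $\theta$.

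First, I will show that $\dim W = 1$ by a dimension count. Since $\pi_1$ is surjective, so is the first projection $p_1 : W \to C_1$, hence $\dim W \in \{1,2\}$. If $\dim W = 2$, then by upper semi-continuity of fibre dimension, every fibre of the surjective proper morphism $p_1 : W \to C_1$ has dimension at least $\dim W - \dim C_1 = 1$. But letting $c_1 = \pi_1(F_1)$, the fibre of $p_1$ over $c_1$ equals $\{c_1\}\times\pi_2(F_1)$, which is a single point by hypothesis, contradicting this bound.

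Next, I will show that $p_1$ has single-point fibres everywhere. For each $c\in C_1$, the fibre of $p_1$ over $c$ equals $\{c\}\times\pi_2(\pi_1^{-1}(c))$. Since $\pi_1^{-1}(c)$ is connected (as $\pi_1$ is a fibration) and $\pi_2$ is continuous and proper, $\pi_2(\pi_1^{-1}(c))$ is a closed connected subset of $C_2$, hence either a single point or all of $C_2$. The second possibility forces the irreducible curve $W$ to equal the fibre $\{c\}\times C_2$, contradicting surjectivity of $p_1$ onto $C_1$. Therefore $\pi_2$ contracts every fibre of $\pi_1$, and $p_1 : W \to C_1$ is proper with all fibres reduced to a single point.

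Finally, I will construct $\theta$ and verify its properties. Because $p_1 : W \to C_1$ is finite, bijective on points, and $C_1$ is smooth (hence normal), Zariski's Main Theorem implies that $p_1$ is an isomorphism; I set $\theta = p_2\circ p_1^{-1} : C_1\to C_2$. The equality $W = \Gamma_\theta$ then yields $\pi_2 = \theta\circ\pi_1$. Connectedness of the fibres of $\pi_2$ shows $\theta^{-1}(c_2)$ is a single point for every $c_2\in C_2$, so $\theta$ is bijective; a bijective morphism between smooth projective curves in characteristic zero is an isomorphism. The main obstacle is the first step: ruling out $\dim W = 2$ is what converts the hypothesis on one contracted fibre into global contraction, since without this reduction one cannot leverage a single fibre of $\pi_1$. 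Once $\dim W = 1$ is in hand, the remaining steps are routine applications of the connectedness of fibres and Zariski's Main Theorem.
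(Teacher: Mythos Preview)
Your argument is correct, but it follows a genuinely different route from the paper's. The paper first observes that all fibres of $\pi_1$ are numerically equivalent, so the condition ``$\pi_2$ contracts $F$'' (equivalently $\pi_{2*}F\sim_{\mathrm{num}}0$) propagates from one fibre to all fibres at once. It then defines $\theta$ set-theoretically and verifies by hand that $\theta$ is a morphism, using the isomorphisms $\pi_{i*}\mathcal O_S\simeq\mathcal O_{C_i}$ to build $\theta^{\sharp}$; finally it notes that $\theta$ is finite of degree~$1$ between smooth curves, hence an isomorphism. Your approach instead passes through the image $W\subset C_1\times C_2$: a fibre-dimension bound forces $\dim W=1$ (one small remark: what you invoke is the lower bound $\dim p_1^{-1}(c)\ge\dim W-\dim C_1$ for dominant morphisms of irreducible varieties, rather than upper semi-continuity per se), connectedness of the fibres of $\pi_1$ shows $p_1\colon W\to C_1$ is bijective, and Zariski's Main Theorem produces $\theta$ directly as $p_2\circ p_1^{-1}$ without any hand-built sheaf map. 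The paper's numerical-equivalence step is a quick and elegant shortcut specific to the surface/curve setting, while your graph-and-ZMT argument is more structural, avoids the explicit construction of $\theta^{\sharp}$, and would generalize more readily to higher-dimensional rigidity statements.
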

\begin{proof}First we prove that $\pi_2$ contracts all fibres of $\pi_1$. Observe that $\pi_2$
contracts $F_1$ if and only if $\pi_{2*}(F_1)\sim_{num}0$. Since the fibres of $\pi_1$ are
parameterized by $C_1$, they are all numerically equivalent. So for an arbitrary fibre $F$ of $\pi_1$,
we have $\pi_{2*}(F)\sim_{num}0$,  which implies that $\pi_2$ also contracts $F$.\\
 Then $\pi_2$ factors through $\pi_1$ set-theoretically, i.e. there exists a map $  \theta $ s.t.
 $\pi_2=\theta\circ\pi_1$.\\ We now show that $\theta$ is a morphism. \\Pick an arbitrary open set $U$
 of $C_2$, we have $\pi_2^{-1}(U)=\pi_1^{-1}\circ\theta^{-1}(U)$. Since $\pi_1$ is surjective,
 $\pi_1(\pi_2^{-1}(U))=\theta^{-1}(U)$. Because $\pi_1$ is flat,  it is an open map. Since $\pi_1$
 maps the open set $\pi_2^{-1}(U)$ onto  $\theta^{-1}(U)$, so $\theta^{-1}(U)$ is open in $C_1$ . Now
 we have proved that $\theta$ is continuous.\\ Then we construct $\theta^ { \sharp}:\mathcal
 O_{C_2}\longrightarrow \theta_*\mathcal O_{C_1}$. Since both $\pi_i$ have connected fibres,
 $\pi_{i*}\mathcal O_S=\mathcal O_{C_i}$. So for an arbitrary open subset $U$ of $C_2$, both
 $\pi_{1}^{\sharp}:\mathcal O_{C_2}(U)\longrightarrow \mathcal O_S(\pi_2^{-1}(U))$ and
 $\pi_{2}^{\sharp}:\mathcal O_{C_1}(\theta^{-1}(U))\longrightarrow \mathcal O_S (\pi_2^{-1}(U))$ are
 isomorphisms. We define $\theta_{U}^{\sharp}:\mathcal O_{C_2}(U)\longrightarrow
 O_{C_1}(\theta^{-1}(U))$ by $\theta_{U}^{\sharp}=\pi_{2}^{\sharp-1}\circ\pi_{1}^{\sharp}$, then
 $(\theta, \theta^ { \sharp})$ is a morphism of ringed spaces. Now we have proved that $\theta$ is a
 morphism of varieties.\\ Then $\theta$ is a finite morphism between smooth projective curves. Since
 both $\pi_i$ have connected fibres, $deg(\theta)=1$, which implies $\theta$ is an isomorphism.
\end{proof} Recall the following result from the book  ~\cite{BPV}, Theorem 18.4, Chapter 3:
\begin{theorem}{(Itaka's conjecture $C_{2,1}$).} Let $\varphi:S\longrightarrow C$ be a fibration, then
the following  inequality holds for any general fibre $S_c$ :
\begin{equation} \kappa(S_c)+\kappa(C)\le \kappa(S). \end{equation}\end{theorem}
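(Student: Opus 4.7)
Since $S_c$ is a smooth projective curve its Kodaira dimension lies in $\{-\infty,0,1\}$, and the same is true of the curve $C$. If either $\kappa(S_c)$ or $\kappa(C)$ equals $-\infty$, then under the convention $-\infty+n=-\infty$ the right-hand side reduces to $-\infty$ and the inequality is trivial. So I may assume $g(S_c)\ge 1$ and $g(C)\ge 1$. The strategy is to produce enough pluricanonical sections on $S$ by pushing forward $\omega_S^{\otimes m}$ to $C$, and then to estimate the resulting sections on $C$ using Riemann--Roch together with a positivity property of the direct image.

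The relative adjunction $\omega_S\cong\omega_{S/C}\otimes\varphi^{*}\omega_C$ together with the projection formula yields, for every $m\ge 1$,
\begin{equation*}
H^{0}\bigl(S,\omega_S^{\otimes m}\bigr) \;=\; H^{0}\bigl(C,\,E_m\otimes\omega_C^{\otimes m}\bigr),\qquad E_m:=\varphi_{*}\omega_{S/C}^{\otimes m}.
\end{equation*}
For every sufficiently divisible $m$, cohomology-and-base-change shows that $E_m$ is a vector bundle on $C$ of rank $P_m(F)=h^{0}(F,mK_F)$, where $F=S_c$ denotes a general fiber. The technical heart of the argument is Fujita's semi-positivity theorem, extended by Viehweg to all $m\ge 1$: the vector bundle $E_m$ is nef on the smooth curve $C$.

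Granting nefness of $E_m$, Riemann--Roch on $C$ produces sections of $E_m\otimes\omega_C^{\otimes m}$ as soon as either factor carries enough positive degree. When $g(C)\ge 2$ we have $\deg\omega_C^{\otimes m}=m(2g(C)-2)$, hence $h^{0}(C,E_m\otimes\omega_C^{\otimes m})$ grows at least like $m\cdot\operatorname{rank}(E_m)$; combined with the fact that $\operatorname{rank}(E_m)=P_m(F)$ grows linearly in $m$ when $\kappa(F)=1$ and equals $1$ when $\kappa(F)=0$, this recovers the required growth of $P_m(S)$. The case I expect to be the main obstacle is $\kappa(F)=\kappa(C)=0$: there both $\omega_C$ and $\omega_F$ are trivial, $E_m$ is a numerically trivial line bundle on the elliptic curve $C$ which need not be $\mathcal{O}_C$, and Riemann--Roch gives no information at all. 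One must then appeal to a finer positivity result, for instance that some positive power of $E_m$ becomes trivial after a suitable finite base change of $\varphi$, in order to conclude $P_m(S)\ge 1$ for some $m$ and hence $\kappa(S)\ge 0$. Everything else reduces to bookkeeping with Riemann--Roch on $C$ and the standard computation of plurigenera on curves.
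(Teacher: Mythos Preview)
The paper does not prove this theorem at all: its entire ``proof'' is a citation to \cite{BPV}, Chapter~III, Theorem~18.4. So there is no approach on the paper's side to compare with; the statement is quoted as a black box and used immediately afterwards in the proof of Theorem~\ref{M1}.

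Your outline is the standard Fujita--Viehweg route to $C_{n,1}$ and is essentially what one finds in \cite{BPV}. Two remarks. First, a slip: when one of $\kappa(S_c),\kappa(C)$ equals $-\infty$ it is the \emph{left}-hand side $\kappa(S_c)+\kappa(C)$ that becomes $-\infty$, not the right-hand side; the inequality is then vacuous, as you intended. Second, you have correctly isolated the only genuinely delicate point, the case $g(C)=g(F)=1$, and you are honest that Riemann--Roch together with nefness of $E_m$ gives nothing there. What is actually needed is not just ``some finer positivity'' but the specific fact that $\varphi_{*}\omega_{S/C}$ is a \emph{torsion} line bundle on $C$ in this situation (equivalently, after an \'etale base change the fibration becomes a product); this is the content of Ueno's theorem on elliptic fibrations over elliptic curves, and it is exactly what \cite{BPV} invokes at that step. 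Without it the argument does not close, so as written your proposal is a correct road map with the key lemma for the critical case still outsourced --- which, to be fair, is also what the paper does for the whole statement.
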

Now let us begin the proof of Theorem \ref{M1}.
\begin{proof}
\begin{enumerate}[1)]\item Assume that $g(C)\ge 1$. Then for any general fibre $F_0 $ of $\pi$, by
Theorem 2.1,  $\kappa(F_0)+\kappa(C)\le \kappa(S)=-\infty$. Then $\kappa(F_0)=-\infty $, which means
$F_0$ is smooth rational, so $\pi$ is a ruling morphism.
 \item If $g(C)=0$ and general fibres of $\pi$ are smooth rational curves, then $S$ is ruled over
     $C\simeq \mathbb P^1$, which implies that $S$ is rational. Since $S$ is minimal, $S$ is
     $\mathbb P^2$ or  $S\simeq Proj_{\mathbb P^1}(\mathcal O\oplus\mathcal O(d))  $ for some $d\ne
     1$. But every morphism from $\mathbb P^2$ to $\mathbb P^1$ is constant, so there will be no
     sections of $\pi$. So $S\simeq \mathbb P^2$ is impossible, and we get $S\simeq Proj_{\mathbb
     P^1}(\mathcal O\oplus\mathcal O(d))  $ for some $d\ne 1$.
 \item
In this paragraph, we show that $S$ is ruled over some smooth curve $X$ and introduce some notation.
Since $\kappa(S)=-\infty$ and $S$ is minimal, $S$ is a ruled surface over some projective smooth
curve $X$. We denote the ruling morphism by $\varphi:S\longrightarrow X$, and write $S$ as $\mathbb
P_X(\mathcal E)$ for some normalised sheaf $\mathcal E$ on $X$.  Using the notation of Definition
2.3, we denote the canonical section of $\varphi$ by $\tau$. Then we have the  following diagram
involving two morphisms with sections:\\$$\xymatrix{S\ar[r]|\varphi
\ar[d]|\pi&X\ar@/_/@{>}[l]|\tau\\\mathbb P^1\ar@/_/@{>}[u]|\sigma}$$
    \\We now show that $NE(S)$ is closed. Let $F_0$ be a general fibre of $\pi$ and denote its
    numerical equivalence  class  by $f_0$. We show that $F_0$ can't be contracted by $\varphi$.
    Otherwise, by the ``Rigidity lemma'', $\varphi$ will factor through $\pi$. Then $\varphi$ ,
    $\pi$ will be isomorphic as fibrations, the general fibres of $\pi$ will be rational, which
    contradicts our assumptions of 3). So $f_0$ is not a multiple of $f$, which implies that the
    extremal lines generated by them are different. Since the second Betti-number, $b_2(S)=2$,  the
    cone $NE(S)$ is two-dimensional, and its boundary $\partial NE(S)$ consists of two  extremal
    lines. Since the self-intersection numbers $f_0^2$ and $f^2$ are zero, by the ``cone theorem for
    surfaces''(Lemma 6.2, Chapter 6, ~\cite {Olivier Debarre} ), we know that both $f_0$ and  $f$
    lie in $\partial NE(S)$. It follows that $\partial NE(S)$ consists exactly of the two extremal
    lines  generated by $f_0$ and  $f$.  Since $f_0$ and  $f$ belong to $NE(S)$, $\partial
    NE(S)\subset NE(S) $, which means $NE(S)$ is closed.\\In the following, we aim to prove that the
    intersection number $f_0\cdot f=1$. \\The two cases $e< 0$ , $e\ge0$ will be dealt with
    separately.\\
    a)
    The  case $e<0$ :\\First, we determine the ample cone and the nef cone  of $S$.\\By Lemma
    \ref{6}: we have  \\$Amp(S)$= \{$aC_0+b f|a>0,\quad 2b>a e$\}.\\
    Taking the closure of $Amp(S)$, we get the nef cone:\\ $Nef(S)=\{aC_0+bf|a\ge0,\quad 2b\ge
    ae\}$.\\ \\Then, we determine the numerical equivalence class of $f_0$.\\Let $f_0=a_1C_0+b_1f$,
    where $a_1=1,\quad b_1\ge 0$ or $a_1\ge 2,\quad 2b_1\ge a_1 e$. \\
    If $a_1=1$ and $f_0=C_0+b_1f$,  $b_1\ge 0$, then $f_0\in Amp(S)$. This contradicts  $f_0^2=0$.\\
    So $f_0=a_1C_0+b_1f$, where $a_1\ge2$, $ 2b_1\ge ae $.\\ Let us determine the numbers $a_1$ and
    $b_1$ by calculating the intersection numbers $f_0^2$ and $f_0\cdot\sigma(\mathbb P^1)$.\\ Since
    \begin{equation} 0=f_0^2=a_1^2C_0^2+2a_1b_1=2a_1b_1-ea_1^2\end{equation} we get $2b_1=a_1e$,
    which implies $f_0= \frac{a_1}{2}(2C_0+ef)$.\\Let $d=(2C_0+ef)\cdot\sigma(\mathbb P^1)$, since
    $f_0\cdot\sigma(\mathbb P^1)=1$, $a_1d=2$. Note that $a_1\ge2$, so $a_1=2$, $b_1=e$. Hence
    \begin{equation} f_0=2C_0+ef.\end{equation}\\Then we  show that $f_0\cdot f=1$.\\ If
    $\sigma(\mathbb P^1)= f$, then $f_0\cdot f=f_0\cdot\sigma(\mathbb P^1)=1$, we are done.\\Our aim
    is to exclude the case $\sigma(\mathbb P^1)\ne f$. Since $f_0\cdot\sigma(\mathbb P^1)=1$,
    $\sigma(\mathbb P^1)$  is not a multiple of $f_0$. If $\sigma(\mathbb P^1)\ne f$, by Lemma
    \ref{6},  $\sigma(\mathbb P^1)=a_2C_0+b_2f$, where $a_2=1,\quad b_2\ge 0$ or $a_2\ge 2,\quad
    2b_2> a_2 e$, in both cases, $(a_2,b_2)$ satisfies  $a_2>0,\quad 2b_2>a_2 e$, which means
    $\sigma(\mathbb P^1)$ lies in  $Amp(S)$. \\
    Consider the canonical divisor $K_S$: \begin {align}
    K_S&=-2C_0+(2g(X)-2-e)f\\&=(-2C_0-ef)+(2g(X)-2)f\\ &=-f_0+(2g(X)-2)f.\end{align}
    \\So by Riemann-Roch theorem:\begin {align} &g(\sigma(\mathbb P^1))\\&=1+
    \frac{1}{2}(\sigma(\mathbb P^1)^2+\sigma(\mathbb P^1)\cdot K_S)\\&=1+\frac{1}{2}\sigma(\mathbb
    P^1)^2+\frac{1}{2}\{-f_0+(2g(X)-2)f\}\cdot\sigma(\mathbb P^1).\end{align}\\ Recall a result of
    Nagata ~\cite{Nagata}, Theorem 1:\\``Let $ S$ be a $\mathbb P^1 $-bundle over a smooth curve $X$
    of genus $g(X)$, then the invariant $e\ge-g(X)$.''\\So $g(X)\ge -e>0$. Since $\sigma(\mathbb
    P^1)$ is ample, $\sigma(\mathbb P^1)^2>0$ and $\sigma(\mathbb P^1)\cdot f >0$. We have
    \begin{equation}g(\sigma(\mathbb P^1))>1-\frac{1}{2}f_0\cdot\sigma(\mathbb
    P^1)=1-\frac{1}{2}>0\end{equation} which is impossible.\\

  So finally we get $\sigma(\mathbb P^1)=f$ and $f\cdot f_0=\sigma(\mathbb P^1)\cdot f_0=1$\\
    b) The case $e\ge 0$ \\We now show $f_0=C_0$.\\
    By Lemma \ref{5} a), we have $f_0=a_1C_0+b_1f$  where $a_1\ge 0$ and $b_1\ge a_1e$ or
    $a_1=1,b_1=0$.\\
    If $b_1>a_1e$. Then by Lemma 2.6 b), $f_0$ is ample, which is impossible.\\
    So $b_1=a_1e$, $f_0=a_1(C_0+ef)$. Since $f_0^2=0$,  $a_1^2(C_0^2+2e)=a_1^2 e=0$. So $b_1=a_1e$
    must be zero, and $f_0=a_1C_0$. But $\sigma(\mathbb P^1)\cdot f_0=1$, so  $a_1 \sigma(\mathbb
    P^1)\cdot C_0=1$, which implies that $a_1$ must be 1. So $f_0=C_0$ and $f_0\cdot f=1$. \\ In
    conclusion of  our analysis  of cases a) and  b), we get that $f_0\cdot f=1$.\\
    Define $\alpha :S \longrightarrow \mathbb P^1 \times X$ by $\alpha=(\pi,\varphi)$, we show
    $\alpha$ is an isomorphism. \\
    Assume there is an irreducible curve $C'$ on $S$ contracted by $\alpha$, then  $C'$ will be
    contracted by both $\pi$ and $\varphi$. Since every fibre of $\varphi$ is integral, $C'$ will be
    some fibre $F'$ of $\varphi$. But we already know that no fibre of $\varphi$ can be contracted
    by $\pi$. So $\alpha$ contracts no irreducible curves on $S$. This implies that  $\alpha$ is a
    quasi-finite morphism. Since $\alpha$ is also projective, it is a finite morphism.\\Since
  $deg(\alpha)=f_0\cdot f=1$, $\alpha$ is an isomorphism. So  we get $S\simeq \mathbb P^1\times X$
  and $\pi=pr_1.$\end {enumerate}
    \end{proof}For a given surface $S$, it is natural to ask how many  algebraic semi-group laws
    exist on it ? In the next theorem we solve this problem for minimal rational surfaces.
    \begin{theorem}If $S$ is a minimal rational surface, then there are finitely many algebraic
    semigroup laws on $S$ modulo $Aut(S)$. \end{theorem}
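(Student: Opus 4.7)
The plan is to combine the reduction from Section~1.1 with the classification of minimal rational surfaces. By that reduction, every algebraic semigroup law on $S$ is determined modulo $\mathrm{Aut}(S)$ either by a datum of one of the few trivial types (cases 3), 8), 9), 10)) or by a non-trivial triple $(\pi, \sigma, \tilde\mu)$ with $\pi: S\to C$ a fibration, $\sigma$ a section, and $\tilde\mu$ a semigroup law on $C$ of type 2), 4) or 5). Since $S$ is minimal rational, Castelnuovo's theorem gives $S = \mathbb{P}^2$ or $S = \mathbb{F}_d := \mathrm{Proj}_{\mathbb{P}^1}(\mathcal{O}\oplus\mathcal{O}(-d))$ with $d = 0$ or $d \ge 2$; neither of these admits a non-constant morphism to a curve of positive genus, so $C$ is forced to be $\mathbb{P}^1$ and $\tilde\mu$ is forced to be one of the two absorption laws (case 2) cannot occur).

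For $S = \mathbb{P}^2$, the example in Section~1.2 shows that there is no non-trivial semigroup law. The trivial laws---constant laws parametrised by a point of $\mathbb{P}^2$, and the single laws of cases 8) and 9)---collapse to at most three orbits since $\mathrm{PGL}_3$ acts transitively on $\mathbb{P}^2$, and case 10) is empty because $\mathbb{P}^2$ is not a product. For $S = \mathbb{F}_d$, Theorem~\ref{M1}(2) together with the Rigidity Lemma shows that the ruling $\pi : \mathbb{F}_d \to \mathbb{P}^1$ is essentially unique modulo $\mathrm{Aut}(S)$: when $d = 0$ the two projections are interchanged by the swap involution, and when $d \ge 2$ the ruling is rigid because the negative section $C_0$ is the only irreducible curve of self-intersection $-d$. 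Together with the two choices of $\tilde\mu$, this reduces the counting problem to the orbits of the stabiliser $G = \mathrm{Aut}(\mathbb{F}_d)_\pi$ on the scheme $\mathrm{Mor}_\pi(\mathbb{P}^1, \mathbb{F}_d)$ of sections.

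The main obstacle is therefore the orbit analysis of $G$ acting on sections of the ruling. My approach would be to use Lemmas~\ref{5} and~\ref{6} to enumerate the admissible numerical classes of sections (namely $C_0$ and the classes $C_0 + nf$ with $n$ subject to the explicit inequalities of those lemmas), and then to exploit the explicit description of $G$ as a semidirect product of $\mathrm{PGL}_2$ (reparametrising the base) with the bundle automorphism group of $\mathcal{O}\oplus\mathcal{O}(-d)$ modulo scalars. The delicate point, and in my opinion the heart of the proof, is a careful comparison of the dimension of the moduli of sections in each admissible numerical class with the dimension of $G$: one must show that, thanks to the non-reductive ``shear'' part of $G$, the action is transitive on each non-empty numerical class and that only finitely many such classes are actually relevant. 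This dimension/orbit comparison is where the hypothesis that $S$ is minimal rational plays an essential role, and is the step I expect to require the most care.
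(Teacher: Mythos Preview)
Your reduction is sound up to the orbit analysis, but the two expectations you flag as ``the heart of the proof'' are both false, and this is a genuine obstruction to the numerical-class approach rather than a matter of care. On $\mathbb{F}_d$ with $d\ge 1$, sections of the ruling occupy \emph{infinitely many} numerical classes: besides the negative section $C_0$, for every integer $n\ge d$ the class $C_0+nf$ contains sections---take any surjection $\mathcal{O}\oplus\mathcal{O}(-d)\twoheadrightarrow\mathcal{O}(n-d)$ given by a pair of homogeneous forms of degrees $n-d$ and $n$ with no common zero. Since $C_0$ is the unique irreducible curve of negative self-intersection, every automorphism of $\mathbb{F}_d$ fixes both $C_0$ and $f$ in $\mathrm{Num}(\mathbb{F}_d)$, so no two of these classes are identified by $\mathrm{Aut}(S)$. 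Your claim that ``only finitely many such classes are actually relevant'' therefore fails outright. The transitivity claim fails as well: the family of sections in the class $C_0+nf$ has dimension $2n-d+1$, whereas $\dim\mathrm{Aut}(\mathbb{F}_d)=d+5$, so for $n>d+2$ the action cannot be transitive even within a single class. The same phenomena occur on $\mathbb{F}_0$.

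The paper does not argue via numerical classes at all. It uses Lemma~2.4 to identify sections with surjections $p:\mathcal{O}\oplus\mathcal{O}(-d)\to\mathcal{L}$ and then analyses the short exact sequence $0\to\mathcal{N}\to\mathcal{O}\oplus\mathcal{O}(-d)\to\mathcal{L}\to 0$ directly, arguing via an $\mathrm{Ext}^1$ computation that this sequence must split, which would force $\{\mathcal{N},\mathcal{L}\}=\{\mathcal{O},\mathcal{O}(-d)\}$ and hence yield only two $\mathrm{Aut}(\mathcal{E})$-orbits. This bundle-theoretic route bypasses the stratification by numerical class entirely and is the argument you should compare against. When you do, look closely at which Ext group is being computed: the extension class of $0\to\mathcal{N}\to\mathcal{E}\to\mathcal{L}\to 0$ lives in $\mathrm{Ext}^1(\mathcal{L},\mathcal{N})$, not in $\mathrm{Ext}^1(\mathcal{N},\mathcal{L})$ as written, and for $\deg\mathcal{L}>0$ the correct group is nonzero---indeed the explicit surjections above give non-split sequences, so this step deserves scrutiny.
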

    \begin{proof}Let ($S$, $\mu$)be a non-trivial algebraic semigroup law, we denote its associated
    contraction by the following diagram: \xymatrix{S\ar[r]|\pi & C\ar@/_/@{>}[l]|\sigma}, where $C$
    is the kernel of $\mu$. Since ($S$, $\mu$) is non-trivial, $C$ is a curve. Since $S$ is
    rational, $C\simeq  \mathbb P^1 $ and $\pi$ is a ruling morphism. So $S\simeq Proj_{\mathbb
    P^1}(\mathcal O\oplus\mathcal O(-d))$ for some $d\ne 1$ . Then sections of the ruling morphism
    $\pi$ are in one-to-one correspondence with surjections \xymatrix{\mathcal O\oplus\mathcal
    O(-d)\ar[r]^{\quad p}& \mathcal L\ar[r]&0}, where $\mathcal L$ is an invertible sheaf on
    $\mathbb P^1$. Consider the kernel of $p$, we denote it by $ \mathcal N$, then $\mathcal N$ is
    also an invertible sheaf. Now there is an exact sequence  \xymatrix{0\ar[r]&\mathcal
    N\ar[r]&\mathcal O\oplus\mathcal O(-d)\ar[r]^{\quad p}& \mathcal L\ar[r]&0}. Observe that
    $deg(\mathcal N)\le 0$, we let $\mathcal N=\mathcal O(-d_2)$ and $\mathcal L=\mathcal O(d_1)$,
    where $d_2\ge 0$ and both $d_i$ are integers.\\  Case 1), $d_2>0$. \\We consider the long-exact
    sequence:\\
  $$\xymatrix{0\ar[r]&H^0(\mathbb P^1, \mathcal O (-d_2))\ar[r]&H^0(\mathbb P^1, \mathcal O\oplus
  \mathcal O (-d_2))\ar[r]^{\quad p_1}&H^0(\mathbb P^1, \mathcal L)}.$$ Since $\dim H^0(\mathbb P^1,
  \mathcal O (-d_2))=0$, $p_1$ is injective. So $\dim H^0(\mathbb P^1, \mathcal L)\ge 1$, which
  implies that  $deg(\mathcal L)\ge 0$. Consider \begin{align*}
Ext^1_{\mathbb P^1}(\mathcal N, \mathcal L) & = Ext^1_{\mathbb P^1}(\mathcal O(-d_2), \mathcal
O(d_1)) \\
& = Ext^1_{\mathbb P^1}(\mathcal O , \mathcal O(d_1+d_2))\\&=H^1(\mathbb P^1, \mathcal
O(d_1+d_2))\\&=H^0(\mathbb P^1, \mathcal O(-2-d_1-d_2)).
\end{align*}Since $d_1=deg(\mathcal L)\ge 0$, $d_2>0$, $\dim H^0(\mathbb P^1, \mathcal
O(-2-d_1-d_2)=0$. So any extension of $\mathcal N$ by $\mathcal L$ is trivial, which implies
$\mathcal N \oplus \mathcal L\simeq \mathcal O \oplus \mathcal O(-d) $. Since $\mathcal O \oplus
\mathcal O(-d) $ is normalised, $deg(\mathcal L)\le 0$, but we already know that $deg(\mathcal L)\ge
0$, so $\mathcal L \simeq \mathcal O$. Observe that \begin{align*}\Lambda ^2(\mathcal
N\oplus\mathcal L)= \mathcal N\otimes \mathcal L=\mathcal O(-d).\end{align*}So $\mathcal N\simeq
\mathcal O(-d)$ and there exists a commutative  diagram:\\
$$\xymatrix{0\ar[r]&\mathcal O(-d)\ar[r]\ar[d]_{\simeq}&\mathcal O\oplus\mathcal
O(-d)\ar[r]\ar[d]_{\theta}&\mathcal O\ar[r]\ar[d]_{\simeq}&0\\0\ar[r]&\mathcal N\ar[r]&\mathcal
O\oplus\mathcal O(-d)\ar[r]&\mathcal L\ar[r]&0} $$where $\theta$ is an automorphism. \\Case 2),
$d_2=0$, .\\We have $\mathcal N\simeq \mathcal O$ and $\mathcal L =\mathcal O(-d)$. Consider the
surjection  $$\xymatrix{\mathcal O\oplus\mathcal O(-d)\ar[r]^{\quad p}& \mathcal L\ar[r]&0},$$ then
$p\in Hom(\mathcal O \oplus \mathcal O(-d), \mathcal O(-d))=Hom(\mathcal O(-d), \mathcal
O(-d))=\mathbb C$, so $p$ is a multiplication by some non-zero scalar $\lambda\in \mathbb C$.
Consider  the isomorphism $\alpha=(id, \times \lambda): \mathcal O \oplus \mathcal
O(-d)\longrightarrow \mathcal O \oplus \mathcal O(-d)$, then  it fits into the following commutative
diagram:\\
 $$\xymatrix{\mathcal O \oplus \mathcal O(-d)\ar[r]^p \ar[d] _{\alpha}&\mathcal
 L\ar[d]_{\simeq}\ar[r]&0\\\mathcal O \oplus \mathcal O(-d)\ar[r]^{p_2}&\mathcal O(-d)\ar[r]&0 },
 $$where $p_2$ is the second projection. \\ In conclusion, modulo $Aut(\mathcal O \oplus \mathcal
 O(-d))$, there are finitely many surjections $\mathcal O \oplus \mathcal O(-d)\longrightarrow
 \mathcal L$. So modulo $Aut(S)$, there are finitely many sections of $\pi$.
\end{proof}
    \section{Classification in the case $\kappa(S)=0$} In this section, we assume that the Kodaira
    dimension of $S $ equals zero.\\ First we state a classification theorem, which is part of
    ``Enriques Classification Theorem''. We  recall some useful notations: $p_g=\dim H^0(S, \mathcal
    O_S(K_S))$, $q= \dim H^1(S,\mathcal O_S)$.
    \begin{theorem}{(Classification theorem)}\label {3.1}\\If $\kappa(S)=0$, then $S$ is one of the
    following surfaces:
    \begin{enumerate}[1)]\item $K3$ surface, $p_g=1, q=0, K_S=0$.
    \item Enriques surface, $p_g=0, q=0, 2K_S=0$.
    \item Bielliptic surface.\\This means there are two elliptic curves $E$, $F$ and a finite
        group $G$  of translations of $E$ acting also on $F$ such that $F/G\simeq \mathbb P^1$,
        and $S\simeq (E\times F)/G$. In this case, $p_g=0$, $q=1$.
        \item Abelian surface. In this case $p_g=1$, $q=2.$ \end{enumerate}\end{theorem}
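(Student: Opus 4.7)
This is the classical Enriques--Kodaira classification restricted to Kodaira dimension zero, so my plan follows the standard route, organised by the numerical invariants $p_g$ and $q$. The first step is to observe that since $S$ is minimal with $\kappa(S)\ge 0$, the canonical class $K_S$ is nef; combined with $\kappa(S)=0$ this forces $K_S^2=0$. One then invokes the abundance theorem for surfaces (Bombieri) to conclude that $12K_S\sim 0$ in $\mathrm{Pic}(S)$, so $K_S$ is torsion. Plugging $K_S^2=0$ into Noether's formula gives
\begin{equation*}
c_2(S)=12\chi(\mathcal{O}_S)=12(1-q+p_g).
\end{equation*}

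Next I would perform the numerical reduction that pins down the possible pairs $(p_g,q)$. Since $K_S$ is torsion, $p_g\ge 2$ would make $h^0(nK_S)$ grow in $n$, contradicting $\kappa(S)=0$; hence $p_g\in\{0,1\}$. Combining the above expression for $c_2$ with the topological identity $c_2=2-4q+b_2$ and the Hodge-theoretic nonnegativity of $b_2$ produces the inequality $q\le p_g+1$, so that the list of possibilities reduces to
\begin{equation*}
(p_g,q)\in\{(1,2),\,(1,0),\,(0,0),\,(0,1)\}.
\end{equation*}

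The final step is to recover the geometric structure in each case. For $(1,2)$ I would study the Albanese morphism $\alpha\colon S\to \mathrm{Alb}(S)$: since $\dim\mathrm{Alb}(S)=q=2$, and $p_g=1$ forces $\alpha^{*}$ on global $2$-forms to be injective, $\alpha$ is surjective; a degree count using $K_S$ torsion then shows $\alpha$ is \'etale of degree one, hence an isomorphism onto an abelian surface. For $(1,0)$ the torsion bundle $K_S$ has a nonzero section, so $K_S\sim 0$, giving a $K3$ surface. For $(0,0)$ one proves that the order of $K_S$ is exactly $2$ (any larger order would produce an \'etale cyclic cover with incompatible $p_g$ or $q$), yielding an Enriques surface. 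For $(0,1)$ the Albanese map lands on an elliptic curve $E$; Iitaka's $C_{2,1}$ inequality combined with $\kappa(S)=0$ shows the general fibre is a smooth elliptic curve $F$, and a Bagnera--de Franchis style analysis of the $\pi_1(E)$-action on the universal cover of $S$ produces the description $S\simeq(E\times F)/G$.

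The main obstacle is the bielliptic case: producing the explicit quotient presentation $(E\times F)/G$ requires a careful description of the monodromy of the elliptic fibration, which is the technical heart of the Bagnera--de Franchis theorem. Showing that $K_S$ has order exactly $2$ in the Enriques case is also delicate, depending on the nonexistence of \'etale covers of small degree with mismatched Hodge numbers.
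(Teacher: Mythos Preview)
The paper does not actually prove this statement: its entire proof is the one-line citation ``\cite{Beauville}, Chapter 8, Theorem 2.'' The theorem is being quoted as background from the Enriques--Kodaira classification, not established within the paper. So your proposal goes far beyond what the paper does, and there is no meaningful comparison of approaches to make; in the context of this paper the ``correct'' proof is simply the citation.

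That said, your sketch is a faithful outline of the standard argument one finds in Beauville or Barth--Hulek--Peters--Van de Ven. One small gap worth flagging: from $q\le p_g+1$ with $p_g\in\{0,1\}$ you obtain the five pairs $(1,2),(1,1),(1,0),(0,1),(0,0)$, not four. The case $(p_g,q)=(1,1)$ must be ruled out separately; this is a genuine step in the classification (one analyses the Albanese fibration onto an elliptic curve and derives a contradiction with $\chi(\mathcal O_S)=1$ and the structure of the fibres). Your list silently drops it. Apart from that, the outline is sound, and your remarks about the bielliptic case being the technical heart and the Enriques order-two step being delicate are accurate.
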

        \begin{proof}~\cite {Beauville}, Chapter 8, Theorem 2.\end{proof}
Observe that in cases 1) and 2), $q=0$; in cases 3) and 4), $q>0$. We consider the case $q>0$ first,
then in the second part of this section, we consider the case $q=0$.
        \subsection{The case $q>0$}First let us recall the classification  of bielliptic surfaces in
        the following lemma. Our main results of this subsection are Theorem 3.3 and Theorem 3.4.
\begin{lemma}Keep the notation of Theorem 3.1, then every bielliptic surface is of one of the
following types:
\begin{enumerate}[1)]
\item $(E\times F)/G$, $G=\mathbb Z/{2\mathbb Z}$, acting on $F$ by $x\mapsto -x$.
\item $(E\times F)/G$, $G=\mathbb Z/{2\mathbb Z}\oplus \mathbb Z/{2\mathbb Z}$, acting on $F$ by
    $x\mapsto -x, x\mapsto x+\varepsilon$, where $\varepsilon\in \mathbb F_2$.
\item $(E\times F_i)/G$, where $F_i=\mathbb C/(\mathbb Z\oplus \mathbb Zi)$, and $G=\mathbb
    Z/{4\mathbb Z}$, acting on $F$  by $ x \mapsto i x$.
\item $(E\times F_i)/G$,  $G=\mathbb Z/{4\mathbb Z}\oplus \mathbb Z/{2\mathbb Z}$, acting on $F$
    by $x\mapsto ix,\quad x\mapsto x+\frac{1+i}{2}$.
\item  $(E\times F_{\rho})/G$, where $F_{\rho}=\mathbb C/({\mathbb Z\oplus \mathbb Z\rho})$, $\rho
    $ is a primitive cubic root of identity,  $G=\mathbb Z/{3\mathbb Z}$ acting on $F$ by
    $x\mapsto \rho x$.
\item $(E\times F_{\rho})/G$, and $G=\mathbb Z/{3\mathbb Z}\oplus \mathbb Z/{3\mathbb Z}$, acting
    on $F$ by $x\mapsto \rho x,\quad x\mapsto x+\frac{1-\rho}{3} $.
\item$(E\times F_{\rho})/G$, and $G=\mathbb Z/{6\mathbb Z}$, acting on $F$ by $x\mapsto -\rho
    x$.\end{enumerate}\end{lemma}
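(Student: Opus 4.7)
The plan is to carry out the classical Bagnera--de Franchis classification, reducing the statement to a purely group-theoretic question about finite group actions on an elliptic curve. Since the action of $G$ on $E$ is by translations, it is free as soon as $G$ embeds into $E$ as a finite abelian subgroup; consequently the diagonal action on $E \times F$ is automatically free, and no constraint on the $E$-factor arises beyond $G \hookrightarrow E$, which holds for any finite abelian $G$ and a suitable $E$. The real content is therefore: classify the finite abelian groups $G$ and their effective actions on an elliptic curve $F$ such that $F/G \simeq \mathbb{P}^1$.

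First I would exploit the semidirect product decomposition $\mathrm{Aut}(F) = F \rtimes \mathrm{Aut}_0(F)$, writing each $g \in G$ as $x \mapsto \zeta_g\, x + t_g$. The projection to the linear part defines a homomorphism $\rho \colon G \to \mathrm{Aut}_0(F)$; put $T = \ker \rho$ and $H = \mathrm{Im}\,\rho$. If $H$ were trivial, $G$ would act by pure translations and $F/G$ would be an elliptic curve, contradicting $F/G \simeq \mathbb{P}^1$. Hence $H$ is a nontrivial cyclic subgroup of $\mathrm{Aut}_0(F)$.

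Next, recall that $\mathrm{Aut}_0(F) \simeq \mathbb{Z}/2\mathbb{Z}$ for a generic $F$, $\mathbb{Z}/4\mathbb{Z}$ when $F \simeq F_i$, and $\mathbb{Z}/6\mathbb{Z}$ when $F \simeq F_\rho$. This already forces $|H| \in \{2,3,4,6\}$ and pins down the isomorphism type of $F$: cases $|H|=2$ allow any $F$, $|H|=4$ forces $F = F_i$, and $|H| \in \{3,6\}$ forces $F = F_\rho$. The induced action of $H$ on the elliptic curve $F/T$ is by a nontrivial rotation, which always has a fixed point, so $(F/T)/H \simeq \mathbb{P}^1$ is automatic; one recovers $F/G \simeq \mathbb{P}^1$ from $F/G = (F/T)/H$.

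The main obstacle, and the step requiring real bookkeeping, is the determination of the admissible translation subgroups $T \subset F$ in each case. One fixes a generator $h$ of $H$ and a lift $\tilde h \in G$ of the form $x \mapsto \zeta x + t$, and imposes two compatibility conditions: $T$ must be stable under multiplication by $\zeta$, and $\tilde h^{|H|}$ must lie in $T$ (with the correct order so that $G$ has the stated structure). Using that the $n$-torsion $F[n]$ has exactly $n^2$ elements and analysing the $\zeta$-stable subgroups of $F[n]$ case by case for $|H| \in \{2,3,4,6\}$ yields exactly the seven types listed in the statement. This is where the special translations $\varepsilon \in \mathbb{F}_2$, $\tfrac{1+i}{2}$, and $\tfrac{1-\rho}{3}$ arise: they are precisely the representatives of the nontrivial classes in $F[2]/\langle -1 \rangle$, $F_i[2]/\langle i \rangle$, and $F_\rho[3]/\langle \rho \rangle$ respectively. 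For the remaining details I would refer to the analysis in Beauville, Chapter VI.
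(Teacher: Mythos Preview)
Your sketch is correct and follows the classical Bagnera--de Franchis argument; in fact you end by pointing to Beauville, Chapter~VI, which is exactly (and only) what the paper does---its entire proof is the citation ``\cite{Beauville}, Chapter~6, Proposition~6.20''. So you have not diverged from the paper; you have simply unpacked the content of the reference it invokes, and your outline of the decomposition $G \to \mathrm{Aut}_0(F)$ with kernel $T$ and image $H \in \{\mathbb{Z}/2,\mathbb{Z}/3,\mathbb{Z}/4,\mathbb{Z}/6\}$, together with the case analysis of $\zeta$-stable translation subgroups, is precisely how Beauville proceeds.
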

\begin{proof}~\cite{Beauville}, Chapter 6, Proposition 6.20.\end{proof}
\begin{theorem}\label{M2}Keep the notation of Lemma 3.2. \\If $S$ is bielliptic, and
\xymatrix{S\ar[r]|\pi &C\ar@/_/@{>}[l]|\sigma}, then:
 \begin{enumerate}[a)] \item $E/G\simeq C$.\item the action of $G$ on $F$ has a fixed point
 $P$.\item
$S$ is of type 1), 3), 5) or 7). Moreover, $\pi=p_1$ is the first projection and  $\sigma:
E/G\longrightarrow S\simeq (E\times F)/G$ is of the form $\sigma(x)=(x,P)$.
\end{enumerate} \end{theorem}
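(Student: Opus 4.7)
The plan is to first identify the fibration $\pi$ and curve $C$ using the Albanese property of bielliptic surfaces, then to lift the section $\sigma$ along the étale cover $E\times F\to S$ and analyse it as a morphism of elliptic curves.

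Since $q(S)=1$, the Albanese variety of $S$ is an elliptic curve, and the first projection $p_1:S\to E/G$ coincides with the Albanese map up to translation. Given any fibration $\pi:S\to C$, pulling back global $1$-forms is injective, so $g(C)\le q(S)=1$. If $g(C)=1$, then $\pi$ factors through the Albanese as $\pi=\theta\circ p_1$ with $\theta:E/G\to C$ an isogeny; since both $\pi$ and $p_1$ have connected fibres, so does $\theta$, hence $\theta$ is an isomorphism, proving (a). To exclude $g(C)=0$: if $C\simeq\mathbb P^1$, then $p_1\circ\sigma:\mathbb P^1\to E/G$ is constant, so $\sigma(\mathbb P^1)$ lies in a single fibre of $p_1$, which is isomorphic to $F$; the resulting non-constant morphism $\mathbb P^1\to F$ contradicts $F$ being elliptic.

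With $\pi=p_1$ and $C=E/G$ established, lift $\sigma:E/G\to S$ along the étale $G$-Galois cover $E\times F\to S$ to a $G$-equivariant section $\tilde\sigma:E\to E\times F$ of $pr_1$, necessarily of the form $\tilde\sigma(x)=(x,f(x))$ for some morphism $f:E\to F$. Decompose $f(x)=h(x)+c$ with $h:E\to F$ a homomorphism and $c=f(0)$, and write the $G$-action on $F$ as $g\cdot y=\alpha_g(y)+t_g$ with $\alpha_g\in Aut(F,0)$. The equivariance $f(x+g_E)=\alpha_g(f(x))+t_g$ (where $g_E$ denotes the translation on $E$) splits into: (i) $\alpha_g\circ h=h$ for all $g$, and (ii) $h(g_E)+c=\alpha_g(c)+t_g$. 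If $h\ne 0$, then $h$ is surjective, which by (i) forces $\alpha_g=id$ for all $g$, so $G$ acts on $F$ purely by translations and $F/G$ is elliptic, contradicting $F/G\simeq\mathbb P^1$. Hence $h=0$, $f\equiv c$, and (ii) reduces to $c=\alpha_g(c)+t_g=g\cdot c$, i.e.\ $c$ is a common fixed point of $G$ on $F$, proving (b). Then $\sigma([x])=[(x,c)]$; inspecting Lemma 3.2, the existence of such a fixed point characterises types 1), 3), 5), 7) (in types 2), 4), 6) some generator acts as a pure non-zero translation on $F$, hence freely), completing (c).

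The main obstacle, I expect, is the clean identification $\pi=p_1$ in step 1: one must combine the Albanese universal property with the connectedness of fibres to promote the factoring isogeny $\theta$ to an isomorphism, and separately rule out the $\mathbb P^1$ base. The remaining analysis is then essentially forced once $\sigma$ has been lifted and one exploits that the $\alpha_g$ cannot all be trivial (since $F/G=\mathbb P^1$).
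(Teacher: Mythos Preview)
Your proposal is correct and follows essentially the same route as the paper: identify $\pi$ with the Albanese fibration $p_1$ (the paper does this via Lemmas~3.5--3.7), lift $\sigma$ through the \'etale cover $E\times F\to S$ to a $G$-equivariant map $E\to F$ (Lemma~3.8), and then show this map must be constant with value a $G$-fixed point because a surjective $h$ would force $G$ to act on $F$ by pure translations (Lemma~3.9). Your exclusion of $g(C)=0$ by trapping $\sigma(\mathbb P^1)$ inside a fibre $\cong F$ is slightly more direct than the paper's \'etale-pullback argument, and your decomposition $f=h+c$ with the clean splitting into conditions (i) and (ii) is a tidier version of the paper's equation-chasing in Lemma~3.9, but the underlying ideas are identical.
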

\begin{theorem}\label{M3}Let $S$ be an abelian surface, and  \xymatrix{S\ar[r]|\pi
&C\ar@/_/@{>}[l]|\sigma} . Then $S\simeq C\times E$ for some elliptic curve $E$, $\pi=pr_1$ is the
first projection and $\sigma: C\longrightarrow C\times E$, is defined by $x\longmapsto (x,y_0)$ for
some $y_0\in E$.\end{theorem}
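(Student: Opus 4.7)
The plan is to first pin down the genus of $C$ via the Jacobian of $C$, and then use the section to split $S$ as a product of abelian subvarieties.

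Since $\sigma$ is a closed immersion into the abelian surface $S$, we have $g(C)\ge 1$ (there are no non-constant morphisms from $\mathbb{P}^{1}$ to an abelian variety). To show $g(C)\le 1$, fix a point $c_{0}\in C$, choose the origin of $S$ so that $0_{S}=\sigma(c_{0})$, and let $\iota:C\to J(C)$ be the Abel--Jacobi map based at $c_{0}$. The composition $\phi:=\iota\circ\pi:S\to J(C)$ sends $0_{S}$ to $0$, so by the rigidity theorem for abelian varieties (any pointed morphism between abelian varieties is a group homomorphism) it is a homomorphism. Hence $\phi(S)$ is an abelian subvariety of $J(C)$. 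But $\phi(S)=\iota(C)$, and by the defining property of the Jacobian, $\iota(C)$ generates $J(C)$ as an abelian variety; therefore $\iota(C)=J(C)$, forcing $g(C)=\dim J(C)=1$.

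Now $C$ is elliptic. Taking $c_{0}$ as the origin of $C$, the identity $\pi\circ\sigma=\mathrm{id}_{C}$ yields $\pi(0_{S})=0_{C}$, so rigidity makes both $\sigma$ and $\pi$ group homomorphisms. Let $E:=\ker(\pi)$, an abelian subvariety of dimension $1$, hence an elliptic curve, and consider the homomorphism
\[
\psi:C\times E\longrightarrow S,\qquad (c,e)\longmapsto \sigma(c)+e.
\]
Its kernel is trivial: $\sigma(c)+e=0$ implies $c=\pi(\sigma(c)+e)=0$, hence $e=0$. Since source and target are abelian surfaces, $\psi$ is an isomorphism. Under $\psi$, the map $\pi$ corresponds to $pr_{1}$ and $\sigma$ to $x\mapsto (x,0)$, which proves the statement (with $y_{0}=0$; composing $\psi$ with a translation by $(0,y_{0})$ yields the more general form claimed).

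The only non-formal ingredient is the genus reduction: once the rigidity theorem for abelian varieties and the universal property of the Jacobian are in hand, the rest is the standard splitting of a homomorphism of abelian varieties admitting a section.
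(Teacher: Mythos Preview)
Your proof is correct and follows essentially the same approach as the paper: both bound $g(C)$ by factoring through the Albanese/Jacobian of $C$ (these coincide for curves, and the paper's induced map $Alb(S)\to Alb(C)$ is exactly your $\phi$ once one identifies $S\simeq Alb(S)$), then use rigidity to make $\pi$ a homomorphism and split off $E=\ker(\pi)$. Your construction of the isomorphism $\psi$ is in fact more explicit than the paper's one-line conclusion; the only implicit step is that $\ker(\pi)$ is connected, which follows from the standing hypothesis $\pi_*\mathcal{O}_S=\mathcal{O}_C$ (Lemma~1.3), so every fibre of $\pi$ is connected.
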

In the following, we analyze the Albanese variety $Alb(S)$ and show that $C$ is elliptic. Then we
use this result to prove Theorem 3.4. For Theorem 3.3, we transfer our problem to the existence of
$G$-equivariant morphisms from $E$ to $F$. Then by some detailed calculations, we find that the
action of $G$ on $F$ must have a fixed point.\\
In the following lemma, we show that $C$ is elliptic.\\Before starting the proof, we recall the
definition and the universal property of Albanese variety: ( see Remark 14, Chapter 5, ~\cite
{Beauville} )\\
Let $X$ be a smooth projective variety. There exists an abelian variety A and a morphism
$\alpha_X:X\longrightarrow A$ with the following universal property:
   for any complex torus $T$ and any morphism $f:X\longrightarrow T$, there exists a unique morphism
   $\tilde f: A\longrightarrow T$, such that $f\circ \alpha=f$. The abelian variety A is called the
   Albanese variety of $X$, and written by $Alb(X)$; the morphism $\alpha_X$ is called the Albanese
   morphism.

        \begin{lemma} If $q(S)>0$ and there exists a curve $C$ s.t. \xymatrix{S\ar[r]|\pi
        &C\ar@/_/@{>}[l]|\sigma}, then $C$ must be elliptic. \end{lemma}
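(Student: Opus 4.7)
The plan is to combine Iitaka's conjecture $C_{2,1}$ (Theorem 2.8 of the excerpt) with the universal property of the Albanese variety. First I would use $C_{2,1}$ applied to $\pi : S \longrightarrow C$: for a general fibre $F$ of $\pi$, we have $\kappa(F)+\kappa(C)\le\kappa(S)=0$. Since $\kappa(F)\in\{-\infty,0,1\}$ and $\kappa(C)\in\{-\infty,0,1\}$, the only possibilities are $\kappa(C)\le 0$, that is, $g(C)\in\{0,1\}$. So the entire problem reduces to excluding $g(C)=0$.

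Assume for contradiction that $g(C)=0$, i.e.\ $C\simeq\mathbb{P}^{1}$. By Theorem 1.2 the section $\sigma:C\hookrightarrow S$ is a closed immersion, so $\sigma(C)$ is a smooth rational curve in $S$. Consider the Albanese morphism $\alpha_{S}:S\longrightarrow\mathrm{Alb}(S)$. Since $q(S)>0$, $\mathrm{Alb}(S)$ is a non-trivial abelian variety; and any morphism $\mathbb{P}^{1}\longrightarrow\mathrm{Alb}(S)$ is constant (there are no non-zero global $1$-forms on $\mathbb{P}^{1}$). Hence $\alpha_{S}\circ\sigma$ is constant, and $\sigma(C)$ is contained in a single fibre of $\alpha_{S}$.

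Next I would rule this out by inspecting fibres of $\alpha_{S}$ in each of the two subcases coming from Theorem 3.1 with $q>0$. If $S$ is an abelian surface, then $\mathrm{Alb}(S)=S$ and $\alpha_{S}$ is (a translate of) the identity, so its fibres are points; this contradicts $\sigma(C)$ being a curve. If $S$ is bielliptic, write $S\simeq(E\times F)/G$ as in Lemma 3.2; then $\mathrm{Alb}(S)\simeq E/G$, and $\alpha_{S}$ is (up to translation) the morphism induced by the first projection. Its fibres are all isomorphic to $F$, which is an elliptic curve; any morphism $\mathbb{P}^{1}\longrightarrow F$ is constant, so again $\sigma(C)$ would be a point, a contradiction.

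The only substantive step is identifying the Albanese of a bielliptic surface and its fibres, but this is standard and is already implicit in the set-up of Lemma 3.2; once that is in place the rest is immediate. Combining the two steps, $g(C)=0$ is impossible and therefore $g(C)=1$, i.e.\ $C$ is elliptic.
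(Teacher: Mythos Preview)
Your argument is correct overall, but there is one small gap to patch, and the route differs from the paper's in both steps.

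The gap: from $\kappa(F)+\kappa(C)\le 0$ you cannot conclude $\kappa(C)\le 0$ as written, because if $\kappa(F)=-\infty$ the inequality puts no constraint on $\kappa(C)$. You need the extra line that $\kappa(F)=-\infty$ would make the general fibre $\mathbb P^1$, hence $S$ ruled and $\kappa(S)=-\infty$, contradicting $\kappa(S)=0$. Once $\kappa(F)\ge 0$ is in place, your use of $C_{2,1}$ gives $g(C)\le 1$.

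Comparison with the paper: to bound $g(C)\le 1$ the paper does not invoke $C_{2,1}$; instead it notes that $\pi$ (composed with the quotient $E\times F\to S$ in the bielliptic case) gives a surjection from an abelian surface onto $\mathrm{Alb}(C)$ that factors through the curve $C$, whence $\dim\mathrm{Alb}(C)\le 1$. To rule out $g(C)=0$ in the bielliptic case, the paper pulls $\sigma$ back along the \'{e}tale cover $E\times F\to S$: since $\mathbb P^1$ is simply connected, the pullback is a disjoint union of copies of $\mathbb P^1$ embedded in the abelian surface $E\times F$, which is impossible. You instead push $\sigma(C)$ forward along $\alpha_S$ and trap it in a fibre isomorphic to $F$. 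Both arguments ultimately rest on ``no rational curves on abelian varieties''; the paper's pullback to $E\times F$ avoids having to identify $\alpha_S$ explicitly, whereas your version is a little more direct once one grants that the Albanese map of a bielliptic surface is the first projection to $E/G$ with fibres $F$ (a standard fact, but one the paper only establishes \emph{after} this lemma).
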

        \begin{proof}By Theorem 3.1, $S$ is bielliptic or abelian.\\
        a) The case when $S$ is abelian.  Firstly, $C$ is not rational, because an abelian variety
        contains no rational curve.
      Now we prove $g(C)\le 1$. \\Recall that for an arbitrary smooth variety $X$, $Alb(X)$, as a
      group, is generated by the image $\alpha(X)$ . \\So the surjective morphism
      $\pi:S\longrightarrow C$ induces a surjective morphism $\tilde \pi :Alb(S)\longrightarrow
      Alb(C)$ such that the following diagram:\\
       $$ \xymatrix {S\ar@{->>}[r]^{\pi}\ar[d]^{\alpha_S}\ar[rd]|{\tilde \pi
       \circ\alpha_S}&C\ar[d]^{\alpha_C}\\Alb(S)\ar@{->>}[r]^{\tilde\pi}&Alb(C)}\\$$is commutative.

Since $\alpha_S$ is an isomorphism, $\tilde \pi \circ\alpha_S$ is surjective. Note that it factors
through the curve $C$, so $g(C)= \dim(Alb(C))\le1$.\\
    We conclude that $C$ must be elliptic.\\
        b) The case $S$ is bielliptic.\\
Using the notation of Theorem 3.1,  we write $S$ as $(E\times F)/G$.\\
     There is a  diagram:\\
       $$\xymatrix{E\times F\ar[r]^{\varphi\quad} & (E\times F)/G \ar[r]^{\qquad \pi} & C}. $$ where
       $\varphi$ is the quotient morphism. Let $f=\pi\circ\varphi$, we get a morphism from the
       abelian surface $E\times F$ to $C$. Then $f$ induces a surjective  morphism $\tilde f:E\times
       F\longrightarrow Alb(C) $, such that  the  diagram:\\
      $$ \xymatrix{E\times F \ar[r]^f \ar@{->>}[d]^{\tilde f}& C \ar[dl]^{\alpha_C} \\ Alb(C)}$$\\is
      commutative.
       Since $f$ factors through the curve $C$, $\dim(Alb(C))\le 1$, so $g(C)\le 1$.\\
       If $g(C)=0$, we consider the cartesian square:\\
      $$ \xymatrix{Y\ar[r]^{\sigma'\quad}\ar[d]&E\times F\ar[d]^{\varphi}\\ C\simeq \mathbb P^1
      \ar[r]^{\sigma\quad}& (E\times F)/G}$$\\ Observe that $G$ acts freely on $E\times F$, the
      quotient morphism $\varphi:E\times F\longrightarrow (E\times F)/G$ is \'{e}tale, so $Y$ is
      also an \'{e}tale cover of  $\mathbb P^1$. Because $\mathbb P^1$ is simply-connected, $Y$ must
      be a disjoint union of finitely many copies of $\mathbb P^1$.
       So there is a closed immersion $\sigma': \coprod \mathbb P^1\longrightarrow E\times F$. Since
       $E\times F$ is an abelian surface, and there are no rational curves on it, we get a
       contradiction. So $C$ is an elliptic curve and this completes our proof.

        \end{proof}Now let us complete the proof of Theorem 3.4.
\begin{proof} By the previous lemma, $C$ is an elliptic curve. Pick a point $P_0$ on $C$ as the
origin, then $C$ becomes a  1-dimensional algebraic group. So $\pi$ is a homomorphism of abelian
varieties. Let $E=ker(\pi)$, then $S\simeq C\times E$.\end{proof}
        In the rest of this part, we assume $S$ is bielliptic. \begin{lemma}If
        $\xymatrix{S\ar[r]|\pi &C\ar@/_/@{>}[l]|\sigma}$, and $g(C)=1$, then $C\simeq Alb(S)$ and
        $\alpha_C\circ \pi=\alpha_S$. \end{lemma}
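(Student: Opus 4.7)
The plan is to exploit the universal property of the Albanese morphism together with the section $\sigma$, using the fact that for a bielliptic $S$ we have $q(S)=1$.

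First I would note that by Theorem \ref{3.1}(3), $q(S)=1$, so $\mathrm{Alb}(S)$ is a $1$-dimensional abelian variety, i.e.\ an elliptic curve. Similarly, since $g(C)=1$, the Albanese morphism $\alpha_C\colon C\to\mathrm{Alb}(C)$ is an isomorphism of elliptic curves (once an origin is chosen on $C$). Fix base points on $S$, $C$ and their Albanese varieties so that all Albanese morphisms send base points to the origin.

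Next I would apply the universal property twice. From $\pi\colon S\to C$ I get a (unique) morphism of abelian varieties $\tilde\pi\colon\mathrm{Alb}(S)\to\mathrm{Alb}(C)$ with $\alpha_C\circ\pi=\tilde\pi\circ\alpha_S$; from $\sigma\colon C\to S$ I get $\tilde\sigma\colon\mathrm{Alb}(C)\to\mathrm{Alb}(S)$ with $\alpha_S\circ\sigma=\tilde\sigma\circ\alpha_C$. By functoriality of the Albanese construction, the identity $\pi\circ\sigma=\mathrm{id}_C$ yields $\tilde\pi\circ\tilde\sigma=\mathrm{id}_{\mathrm{Alb}(C)}$. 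Since $\mathrm{Alb}(S)$ and $\mathrm{Alb}(C)$ are both elliptic curves and $\tilde\pi$ is a surjective homomorphism with a right inverse, $\tilde\pi$ is an isomorphism.

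Finally, I would use $\alpha_C$ (an isomorphism) followed by $\tilde\pi^{-1}$ to identify $C$ with $\mathrm{Alb}(S)$. Under this identification, the relation $\alpha_C\circ\pi=\tilde\pi\circ\alpha_S$ becomes exactly $\alpha_C\circ\pi=\alpha_S$, which is what we want.

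The only subtle point — and really the main (mild) obstacle — is the bookkeeping of base points and the identification $C\simeq\mathrm{Alb}(S)$: one must make the choices so that $\tilde\pi$ and $\tilde\sigma$ are genuine homomorphisms of abelian varieties and that the final identification is the composition $\tilde\pi^{-1}\circ\alpha_C$, which then turns the universal-property square into the stated equality $\alpha_C\circ\pi=\alpha_S$. Everything else is a direct application of the universal property and dimension count.
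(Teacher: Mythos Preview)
Your proof is correct and follows essentially the same route as the paper: both apply the universal property of the Albanese to $\pi$ and $\sigma$, obtain induced maps $\tilde\pi,\tilde\sigma$ between $\mathrm{Alb}(S)$ and $\mathrm{Alb}(C)$ satisfying $\tilde\pi\circ\tilde\sigma=\mathrm{id}$, and then use $q(S)=1=\dim\mathrm{Alb}(C)$ to conclude that $\tilde\pi$ is an isomorphism (the paper phrases this via the splitting $\mathrm{Alb}(S)\simeq\mathrm{Alb}(C)\times\ker(\pi')$, which is the same argument). You are in fact more explicit than the paper about the second conclusion $\alpha_C\circ\pi=\alpha_S$ and the identification $\tilde\pi^{-1}\circ\alpha_C$ that makes it hold.
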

\begin{proof}
 By the universal property of the Albanese variety, $\pi$ and $\varphi$ induce the diagram:
 $$\xymatrix{Alb(S)\ar[r]|{\pi' }&Alb(C)\ar@/_/@{>}[l]|{\sigma'}}$$ \\such that $ \pi '  \circ
 \sigma '=id_{Alb(C)}$. So $Alb(S)\simeq Alb(C)\times ker(\pi').$ Since $C $ is elliptic, $\dim
 Alb(C)=1$ and $ Alb(C)\simeq C  $. Observe that $q(S)=\dim Alb(S)=1$, so $\dim Alb(S)=\dim Alb(C)$.
 Then $Alb(S)\simeq Alb(C)\simeq C$. \end{proof}

\begin{lemma} Using the notations of Theorem 3.1, we  write $S$ as $(E\times F)/G$. If
\xymatrix{S\ar[r]|\pi &C\ar@/_/@{>}[l]|\sigma}, then $C\simeq E/G$ and $\pi=p_1$. \end{lemma}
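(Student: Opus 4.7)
My plan is to lift $\pi$ to the \'etale Galois cover $\varphi:E\times F\to S$, apply rigidity for morphisms into an elliptic curve, and then exploit $G$-equivariance to force $\pi$ to factor as the natural projection $p_1:S\to E/G$ followed by an isogeny of elliptic curves; a connectedness argument then makes the isogeny an isomorphism.

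First, consider the composition $\pi\circ\varphi:E\times F\to C$. Since $E\times F$ is an abelian surface and $C$ is an elliptic curve (by the preceding lemma), the rigidity lemma says this morphism is, after a translation on $C$, a group homomorphism. Decomposing along the product structure, we may write $\pi\circ\varphi(e,f)=\psi_E(e)+\psi_F(f)$ for homomorphisms of elliptic curves $\psi_E:E\to C$ and $\psi_F:F\to C$; each of these is $\mathbb{C}$-linear on universal covers.

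Second, I would impose $G$-invariance. Writing the $G$-action on $E\times F$ as $(e,f)\mapsto(e+t_g,\varepsilon_g f+s_g)$, with $\varepsilon_g$ a root of unity and $s_g\in F$, invariance of $\pi\circ\varphi$ gives $\psi_E(t_g)+\varepsilon_g\psi_F(f)+\psi_F(s_g)=\psi_F(f)$ for every $f\in F$ and $g\in G$. Comparing the $f$-dependent parts yields $(\varepsilon_g-1)\psi_F=0$, and by Lemma 3.2 there is in every one of the seven types some $g\in G$ with $\varepsilon_g\ne 1$. Hence $\psi_F=0$ and $\psi_E(t_g)=0$ for all $g$, so $\psi_E$ descends to a homomorphism $\bar{\psi}_E:E/G\to C$.

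Third, this produces a factorisation $\pi=\bar{\psi}_E\circ p_1$, where $p_1:S\to E/G$ is the morphism induced by the $G$-equivariant first projection $E\times F\to E$. The map $\bar{\psi}_E$ is surjective (since $\pi$ is), hence an isogeny of some degree $d$. A generic fibre of $\pi$ over $c\in C$ is the disjoint union of the $d$ fibres of $p_1$ over the points of $\bar{\psi}_E^{-1}(c)$, and each fibre of $p_1$ is a single copy of $F$ (using that $G$ acts freely on $E$ by translation), hence connected. The connectedness of the fibres of $\pi$, which holds since $\pi_*\mathcal{O}_S=\mathcal{O}_C$ (Lemma~1.2), then forces $d=1$, so $\bar{\psi}_E$ is an isomorphism; thus $C\simeq E/G$ and, under this identification, $\pi=p_1$. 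The main obstacle is the second step: ensuring that the linear part of the $G$-action on $F$ is non-trivial for at least one $g$, which is essentially built into the definition of a bielliptic surface (otherwise $F/G$ could not be $\mathbb{P}^1$) and is visible case by case in Lemma~3.2; the remaining steps are routine.
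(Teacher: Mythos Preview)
Your argument is correct, but it proceeds quite differently from the paper's. The paper leans on the Albanese variety: Lemma~3.6 already identifies $C\simeq\mathrm{Alb}(S)$ and $\pi$ with the Albanese map, so the universal property immediately factors the \emph{other} projection $p_1:S\to E/G$ through $\pi$ as $p_1=\widetilde{p_1}\circ\pi$; since $p_1$ has connected fibres, $\widetilde{p_1}$ has degree $1$ and is an isomorphism. You instead lift $\pi$ to $E\times F$, use rigidity to split it as $\psi_E+\psi_F$, and kill $\psi_F$ by exploiting the nontrivial linear part of the $G$-action on $F$; this factors $\pi$ through $p_1$ (the opposite direction from the paper), and then connectedness of the fibres of $\pi$ finishes. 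Your route is more explicit and essentially bypasses Lemma~3.6, at the cost of a small computation with the classification in Lemma~3.2; the paper's route is shorter and more conceptual once Lemma~3.6 is in hand. One notational caveat: writing $\varepsilon_g\psi_F(f)$ tacitly treats $\varepsilon_g$ as acting on $C$, which it need not; cleaner is to say $\psi_F\circ(\text{mult.\ by }\varepsilon_g)=\psi_F$ as maps $F\to C$, hence on universal covers $\lambda\varepsilon_g=\lambda$, forcing $\lambda=0$ when $\varepsilon_g\ne1$.
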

\begin{proof}By Lemma 3.5, $C$ is elliptic, then $C$ satisfies all the assumptions of Lemma 3.6, so
$C\simeq Alb(S)$. \\
Recall that $G$ is a group of translations of  $E$. Also $E/G$ is elliptic, so that $Alb(E/G)=E/G$.
By the universal property of the Albanese variety,  the first projection $p_1$  induces a morphism
$\widetilde {p_1}$ such that the diagram :\\
$$\xymatrix{S \ar[r]^{p_1}\ar[d]_{\pi}&E/G\\C\ar[ur]_{\widetilde {p_1 }}}$$ is commutative.\\ Note
that $\widetilde {p_1}$ is a finite morphism between projective curves and $p_1$ has connected
fibres, so $deg(\widetilde{p_1})=1$. Then $\widetilde{p_1}$ is an isomorphism, which completes our
proof. \end{proof}
In the following lemma, we transfer our problem to the existence of $G$- morphisms from $E$ to $F$.
\begin{lemma}   There exists a section $\sigma$ of $p_1:S\longrightarrow E/G$ if and only if there
exists a $G$-morphism $h:E\longrightarrow F$.\end{lemma}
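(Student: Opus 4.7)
The plan is to establish the bijection by exploiting that, since $G$ acts freely on $E\times F$, the quotient map $\varphi:E\times F\to S=(E\times F)/G$ is étale, and $p_1:S\to E/G$ is induced from the first projection $E\times F\to E$ after passing to $G$-quotients.

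For the direction ``$\Leftarrow$'', given a $G$-equivariant morphism $h:E\to F$, I would form the graph morphism $\tilde\sigma:E\to E\times F$, $e\mapsto (e,h(e))$. The $G$-equivariance of $h$ together with the diagonal action make $\tilde\sigma$ a $G$-morphism, so it descends to a morphism $\sigma:E/G\to S$. A direct check on the étale cover $E\times F\to S$ then yields $p_1\circ\sigma=\mathrm{id}_{E/G}$.

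For the direction ``$\Rightarrow$'', the key technical step I would verify is that the square
$$\xymatrix{E\times F\ar[r]^{q}\ar[d]_{\varphi}&E\ar[d]\\S\ar[r]^{p_1}&E/G}$$
(with $q$ the first projection) is cartesian. Given a point $(e,[(e',f')])\in E\times_{E/G}S$ with $[e]=[e']$ in $E/G$, the free translation action of $G$ on $E$ provides a unique $g\in G$ with $e'=g\cdot e$, and then $(e,g^{-1}\cdot f')$ is the unique preimage in $E\times F$; this identifies $E\times_{E/G}S$ with $E\times F$ compatibly with both projections. Pulling back a section $\sigma$ of $p_1$ along the étale cover $E\to E/G$ then produces a section $\tilde\sigma$ of $q$, which must take the form $\tilde\sigma(e)=(e,h(e))$ for a uniquely determined morphism $h:E\to F$. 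The $G$-invariance of $\sigma$ on the base $E/G$, combined with the uniqueness of this lift, forces $\tilde\sigma$ to intertwine the $G$-actions, and hence $h$ to be $G$-equivariant.

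The main obstacle will be the identification $E\times_{E/G}S\cong E\times F$ compatibly with both projections, since everything else is formal once we can freely move sections between the base $E/G$ and the étale cover $E$. After this step the two constructions $h\mapsto\sigma$ and $\sigma\mapsto h$ are visibly mutually inverse, which completes the bijection.
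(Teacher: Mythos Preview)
Your proposal is correct and follows essentially the same approach as the paper: both identify $E\times F$ with the fibre product $E\times_{E/G}S$, pull back a section $\sigma$ of $p_1$ along the \'etale cover $E\to E/G$ to obtain a section $\tilde\sigma(e)=(e,h(e))$ of the first projection $E\times F\to E$, and then check that $h$ is $G$-equivariant. You are somewhat more explicit than the paper in justifying why the square is cartesian and in spelling out the ``if'' direction (which the paper simply calls trivial), but the argument is the same.
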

\begin{proof} The ``if'' part is trivial, we now prove the ``only if'' part.\\
 The quotient morphism $\varphi: E\longrightarrow E/G$ induces a  cartesian square:\\
$$\xymatrix{E\times F\ar[r]^{\tilde\varphi}\ar[d]^{p_1'}&(E\times
F)/G\ar[d]^{p_1}\\E\ar[r]^{\varphi}&E/G}$$where $p_1'$ is the first projection, and $\tilde\varphi$
is the quotient morphisms. \\
By the above cartesian square, any section $\sigma$ of $p_1$  induces a section of $p_1'$. We denote
it by  $\delta:E\longrightarrow E\times F$, for all $ x\in E$, $\delta(x)=(x,h(x))$, where
$h=p_2\circ\delta$.\\
Now we illustrate all the morphisms in the following commutative
diagram:\\$$\xymatrix{E\ar[dr]|{\delta}\ar[drr]|{\sigma\circ\varphi}\ar[ddr]|{id_E}\\ & E\times F
\ar[r]^{\tilde\varphi\quad} \ar[d]|{p_1'}& (E\times F)/G\ar[d]|{p_1}\\ &E\ar[r]^{\varphi\quad} & E/G
\ar@/_/@{>}[u]_{\sigma}}$$\\and verify that $h$ is a $G$-morphism.
For any $x\in E$ and $ g\in G$, we denote the action of $g $ on $x$, by $g\cdot x$. We aim to show
that $\delta(g\cdot x)=g\cdot\delta(x)$. It surffice to verify that \begin{equation}
\sigma\circ\varphi(g\cdot x)=g\cdot(\sigma\circ\varphi(x)).\end{equation}Since $G$ acts on $(E\times
F)/G$ trivially, \begin{equation} g\cdot(\sigma\circ\varphi(x))=\sigma\circ\varphi(x).\end{equation}
Observe that $\varphi$ is a quotient morphism, so \begin{equation} \varphi(g\cdot
x)=\varphi(x).\end{equation} So \begin{equation} \sigma\circ\varphi(g\cdot
x)=\sigma\circ\varphi(x)=g\cdot(\sigma\circ\varphi(x)).\end{equation} Equation (21) holds, so
$\delta$ is a $G$-morphism, hence so is $h$. \end{proof}

Now we show that the action of $G$ on $F$ has a fixed point.
\begin{lemma}If $p_1: S\longrightarrow E/G$ has a section $\sigma$, then the action of $G$ on $F$
has a fixed point.\end{lemma}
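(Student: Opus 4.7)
The plan is to combine Lemma 3.8 with the structure of morphisms between elliptic curves and the explicit form of the $G$-actions listed in Lemma 3.2. By Lemma 3.8, a section $\sigma$ of $p_1$ produces a $G$-equivariant morphism $h:E\longrightarrow F$. Since $E$ and $F$ are elliptic curves, any such morphism factors as $h(x)=\phi(x)+c$ where $\phi:E\to F$ is a homomorphism of abelian varieties and $c\in F$ is a fixed element.

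Next I would unpack the $G$-action on $F$. In every one of the seven types of Lemma 3.2, each $g\in G$ acts on $F$ by an affine map $y\mapsto \chi(g)\,y+\tau(g)$, where $\chi(g)\in\mathrm{Aut}(F,0)$ is multiplication by a root of unity (namely $\pm 1$, $\pm i$, $\rho$, or $-\rho$) and $\tau(g)\in F$. Meanwhile $G$ acts on $E$ by translations, say $g\cdot x=x+t_g$. Writing out $h(g\cdot x)=g\cdot h(x)$ and separating the ``linear'' and ``translation'' parts, I would obtain the two conditions
\begin{equation}
(1-\chi(g))\circ\phi=0\quad\text{and}\quad \phi(t_g)+c=\chi(g)\,c+\tau(g)\qquad\text{for every }g\in G.
\end{equation}

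The key step is then to exploit the first relation. In each of the seven types in Lemma 3.2, there exists some generator $g_0\in G$ with $\chi(g_0)\neq 1$ (the multiplication-by-root-of-unity generator). Since $1-\chi(g_0)$ is then a nonzero endomorphism of the elliptic curve $F$, its kernel is finite; hence the image of $\phi$ is finite, and since $E$ is connected, $\phi=0$. Therefore $h$ is constant equal to $c$.

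Finally, substituting $\phi=0$ into the second relation gives $c=\chi(g)\,c+\tau(g)=g\cdot c$ for every $g\in G$, so $c\in F$ is a fixed point of the $G$-action, which is the desired conclusion. I expect the only genuinely delicate point to be the case-by-case verification that each group $G$ of Lemma 3.2 does contain an element acting on $F$ with nontrivial linear part, but this is visible by inspection of the table and so amounts to a bookkeeping check rather than a real obstacle.
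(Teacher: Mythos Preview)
Your proposal is correct and follows essentially the same approach as the paper: invoke Lemma~3.8 to obtain a $G$-equivariant morphism $h:E\to F$, write $h$ as a homomorphism plus a constant, extract relations from equivariance, and use the classification in Lemma~3.2 (some $g$ has nontrivial linear part on $F$) to force the homomorphism part to vanish, so that the constant is a fixed point. The only cosmetic difference is in how the vanishing is argued: the paper uses the dichotomy ``constant or surjective'' for morphisms of curves and derives a contradiction from surjectivity, whereas you observe directly that $1-\chi(g_0)$ is an isogeny with finite kernel, forcing $\phi=0$; these are equivalent formulations of the same step.
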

\begin{proof} By Lemma 3.8,  $\sigma$ induces a $G$-morphism $h: E\longrightarrow F$. We write it as
for all $ x\in E$,  $h(x)=Ax+a$,  where $A$ is the linear part of $h$. Observe that  $G$ acts on $E$
as translations, we denote its action  by: for all $ x\in E$, $g\cdot x= x+x_g$. And we denote the
action of $G$ on $F$ by:  for all $  y \in F$, $g\cdot y=l(y)+ \varepsilon _g$, where $l$ is the
linear part.\\
Since $h(g\cdot x)=g\cdot h(x)$, we have \begin{equation}h(x+x_g)=g\cdot (Ax+a).\end{equation}  The
left hand  of (25) equals \begin{equation}h(x+x_g)=A(x+x_g)+a.\end{equation} The right hand of (25)
equals \begin{align}&g\cdot(Ax+a)\\=&l(Ax+a)+\varepsilon _g\\=&(l(Ax)+\varepsilon
_g)+(l(a)+\varepsilon _g)-\varepsilon _g\\=&g\cdot (Ax)+g\cdot a-\varepsilon _g.\end{align}
So \begin{equation}A(x+x_g)+a=g\cdot (Ax)+g\cdot a-\varepsilon _g.\end{equation}
Let $x=0$ in the above equation (31), we have \begin{equation}A(x_g)+a=(g\cdot 0-\varepsilon
_g)+g\cdot a.\end{equation} But \begin{equation}g\cdot 0=l(0)+\varepsilon _g=\varepsilon
_g.\end{equation} So \begin{equation}A(x_g)+a=g\cdot a. \end{equation} Subtracting equation (34)
from equation (31), we get \begin{equation}Ax=g\cdot Ax-\varepsilon _g, \end{equation}  which
implies that \begin{equation}g\cdot Ax=Ax+\varepsilon _g.\end{equation}\\
If $h:E\longrightarrow F$ is constant, i.e. $h$ maps $E$ to a single point $P\in F$, then $h$ is
$G-$morphism, i.e. $g\cdot h(x)=h(g\cdot x) $, implies that $g\cdot P= P$, which means the action of
$G$ on $F$ has a fixed point.\\
Otherwise $h$ is surjective. Then its linear part $A$, is a linear automorphism of the complex plane
which maps the lattice of $E$ into the lattice of $F$. So by equation (36), for all $ y \in F$, we
have $g\cdot y=y+\varepsilon _g $, which means that $G$ acts on $F$ by translations. \\
Note that for all  types of bielliptic surfaces in Lemma 3.2, the action of $G$ on $F$ has a
non-trivial linear part. So $h$ is not surjective, hence it is a constant morphism, and the action
of $G$ on $F$ has a fixed point. \\
\end{proof}
Observe that, for surfaces of types $2)$, $4)$, $6)$ in Lemma 3.2, $G$ has no fixed point on $F$.
This completes the proof of Theorem 3.3.
\\Now recall a  known result (see  ~\cite {Michel. Brion}, Section 4.5,  {Remark 16}):
 Consider the functor of composition laws on a variety $S$, i.e. the contravariant functor from
 schemes to sets given by $T\longmapsto Hom(S\times S \times T, S)$, then the families of algebraic
 semigroup laws yield a closed subfunctor, and this subfunctor is represented by a closed subscheme
 $SL(S)\subseteq Hom(S\times S, S)$. \\For a given algebraic law $\mu_{t_0}$, we denote its
 associated contraction by $\xymatrix{S\ar[r]|\pi &C\ar@/_/@{>}[l]|\sigma}$ and its associated
 abelian variety by $A$, then the connected component of $\mu_{t_0}$ in $SL(S)$ is identified with
 the closed subscheme of $Hom(C, S)\times A$ consisting of those pairs $(\varphi, g)$ such that
 $\varphi$ is a section of $\pi : S\longrightarrow C$. We denote the scheme of sections of $\pi$ by
 $Mor_{\pi}(C, S)$, it is isomorphic to an open subscheme of $Hilb(S)$ by assigning  every section
 to its image in $S$. By using the local study of $Hilb(S)$, for any section $\sigma$ of $\pi$, the
 dimension of the tangent space of $Hilb(S)$ at $\sigma(C)$ is $h^0(\sigma(C), \mathcal
 N_{\sigma(C)/S})$, and the obstruction lies in $H^1(\sigma(C), \mathcal N_{\sigma(C)/S})$ (where
 $\mathcal N_{\sigma(C)/S}$ is the normal bundle of $\sigma (C)$ in $S$).
 (For the discussion of local study of $Hilb(S)$, see ~\cite{Mumford2}.)\\In the following, we want
 to study the structure of $Mor_{\pi}(C,S)$.
For $Kod(S)=0$, we have the following two theorems.
 \begin{theorem} Assume that $S$ is a bielliptic surface, if there is a curve $C$ satisfying
 \xymatrix{S\ar[r]|\pi &C\ar@/_/@{>}[l]|\sigma}, then $Mor_{\pi}(C, S)$ consists of reduced isolated
 points. \end{theorem}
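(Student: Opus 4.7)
The plan is to apply the local theory of $Hilb(S)$ recalled just before the theorem: a section $\sigma$ is a reduced isolated point of $Mor_{\pi}(C,S)$ exactly when $h^0\bigl(\sigma(C),\mathcal N_{\sigma(C)/S}\bigr) = 0$, since the Zariski tangent space to $Hilb(S)$ at $\sigma(C)$ then vanishes and hence both the deformation space and the scheme structure are trivial. By Theorem \ref{M2}, one may write $S \simeq (E \times F)/G$ of type 1), 3), 5) or 7), with $\pi = p_1$, and every section of $p_1$ is of the form $\sigma_P\bigl([x]\bigr) = [(x, P)]$ for some $G$-fixed point $P \in F$. It therefore suffices to prove, for each such $P$, that $\mathcal N_{\sigma_P(C)/S}$ has no nonzero global section.

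My approach is to pull the normal bundle back to the étale cover $\varphi: E \times F \to S$ (which is étale because the classification of Lemma 3.2 shows that $G$ acts on $E$ by nontrivial translations, hence freely on $E \times F$). Since $P$ is $G$-fixed, $\varphi^{-1}\bigl(\sigma_P(C)\bigr) = E \times \{P\}$, and $\varphi$ restricts to the étale quotient $E \to E/G$. The normal bundle of $E \times \{P\}$ inside $E \times F$ is canonically $T_P F \otimes \mathcal O_E$, which is the trivial line bundle on $E$. The natural $G$-linearisation is $g \cdot (x, v) = (x + x_g,\, l_g\, v)$, where $l_g \in \mathbb C^*$ is the linear part of the action of $g$ on $F$ at $P$. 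Descending by $\varphi$, one gets that $\mathcal N_{\sigma_P(C)/S}$ is the degree zero line bundle on the elliptic curve $E/G$ associated to the character $\chi : G \to \mathbb C^*$, $g \mapsto l_g$.

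Next I would read off from Lemma 3.2 that in each of the four types 1), 3), 5), 7) the generator of $G$ has nontrivial linear part on $F$ (namely $-1$, $i$, $\rho$, $-\rho$), so $\chi$ is a nontrivial character. Consequently $\mathcal N_{\sigma_P(C)/S}$ is a nontrivial torsion element of $Pic^0(E/G)$, and therefore $h^0 = h^1 = 0$ by the standard cohomology of a nontrivial degree zero line bundle on an elliptic curve. This kills the tangent space of $Hilb(S)$ at each $\sigma_P(C)$, so every section of $\pi$ corresponds to a reduced isolated point of $Mor_{\pi}(C,S)$.

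The main obstacle is the normal bundle computation, specifically tracking the $G$-linearisation on the trivial line bundle over $E \times \{P\}$ and verifying that the character $\chi$ does not collapse to the identity under descent. Once this descent is set up correctly, the nonvanishing of the linear part recorded type-by-type in Lemma 3.2 closes the argument immediately; no case-by-case cohomology computation beyond the single remark $h^0(\mathcal L) = 0$ for a nontrivial $\mathcal L \in Pic^0$ of an elliptic curve is required.
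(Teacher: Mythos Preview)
Your proposal is correct and follows essentially the same route as the paper: the paper deduces this theorem as a special case of Theorem 6.1, claim 2), whose proof likewise pulls back along the \'etale quotient, identifies the normal bundle with the line bundle $\mathcal L_\chi$ on $D/G$ attached to the character $\chi$ of $G$ on $H^0(\Omega)$ of the elliptic fibre, and concludes from $\chi\neq 1$ that $\mathcal L_\chi$ is a nontrivial degree-zero bundle with $h^0=0$. The only cosmetic difference is that the paper reaches $\mathcal N\simeq\mathcal L_\chi$ via $\sigma^*K_S$ and adjunction rather than by descending the normal bundle directly, and it states the result in the generality of an arbitrary smooth elliptic fibration $(D\times E)/G\to D/G$ before specialising to the bielliptic case.
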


\begin{theorem} If $S$ is an abelian surface, and there is a curve $E$ satisfying
\xymatrix{S\ar[r]|\pi & E\ar@/_/@{>}[l]|\sigma}, then $Mor_{\pi}(E,S)= S/E =ker (\pi)$.\end{theorem}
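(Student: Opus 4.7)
By Theorem \ref{M3} we may identify $S \simeq E \times E'$, where $E' = \ker(\pi)$ is an elliptic curve, $\pi = pr_1$, and the given section is $\sigma(x) = (x, y_0)$ for some $y_0 \in E'$; in particular $\sigma(E) = E \times \{y_0\}$ is a fibre of the second projection $pr_2 : S \to E'$. The plan is to combine a normal bundle computation, which bounds the Zariski tangent space of $Mor_\pi(E, S)$ at $\sigma$, with an explicit one-parameter family of sections obtained by translating $\sigma$ by elements of $\ker(\pi)$; this family will saturate the tangent bound and force the scheme structure to coincide with $\ker(\pi)$.

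For the bound, since $\sigma(E)$ is a fibre of $pr_2$, its normal bundle is $\mathcal N_{\sigma(E)/S} \simeq pr_2^*\, T_{y_0} E' \simeq \mathcal O_{\sigma(E)}$, the trivial line bundle on an elliptic curve. Hence $h^0(\sigma(E), \mathcal N_{\sigma(E)/S}) = 1$ and, by the description of the tangent space of $Hilb(S)$ recalled in the paragraph preceding Theorem \ref{M3}, the Zariski tangent space of $Mor_\pi(E, S)$ at $\sigma$ has dimension at most one. For the family, the closed subgroup $\ker(\pi) \simeq E'$ of the abelian surface $S$ acts on $S$ by translation, and this action preserves the fibration $\pi$. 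Consequently, for each $c \in \ker(\pi)$ the formula $\sigma_c(x) := (x, y_0 + c)$ defines a section of $\pi$, and $c \mapsto \sigma_c$ is a closed immersion $\ker(\pi) \hookrightarrow Mor_\pi(E, S)$ sending $0$ to $\sigma$, whose image is a smooth projective curve of dimension one.

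Combining the two steps, the image of $\ker(\pi)$ is a smooth one-dimensional closed subscheme of $Mor_\pi(E, S)$ at a point where the ambient tangent dimension is at most one. Therefore $Mor_\pi(E, S)$ is smooth of dimension exactly one at $\sigma$ and locally coincides with the image of $\ker(\pi)$; since $\ker(\pi)$ is proper and connected, it is identified with the entire connected component of $\sigma$, as asserted. The main conceptual obstacle is that the obstruction space $H^1(\sigma(E), \mathcal N_{\sigma(E)/S}) \simeq H^1(\mathcal O_E)$ is one-dimensional, so a priori $Mor_\pi(E, S)$ could be non-reduced or jump in dimension at $\sigma$; the role of the explicit translation family from $\ker(\pi)$ is precisely to saturate the $h^0$ bound and thereby force the obstructions to vanish, yielding smoothness.
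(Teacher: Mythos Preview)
Your proposal is correct and follows essentially the same approach as the paper. The paper deduces this theorem as the special case $G=\{e\}$ of Theorem~6.1, whose proof of claim~1) uses precisely your two ingredients: the translation action of $\ker(\pi)$ on $S$ produces a one-parameter family of sections giving a closed immersion $\ker(\pi)\hookrightarrow Mor_\pi(E,S)$, and a normal bundle computation shows $\dim H^0(\sigma(E),\mathcal N_{\sigma(E)/S})\le 1$, forcing $Mor_\pi(E,S)$ to be smooth of dimension one and to agree locally with the image of $\ker(\pi)$. The only cosmetic difference is that the paper obtains $\deg\mathcal N_{\sigma(E)/S}=0$ via the adjunction and canonical bundle formulae for elliptic fibrations, whereas you read off $\mathcal N_{\sigma(E)/S}\simeq\mathcal O_E$ directly from the product splitting $S\simeq E\times E'$; your route is slightly more direct in this special case. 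Note, as you correctly flag, that both arguments identify $\ker(\pi)$ only with the connected component of $\sigma$ in $Mor_\pi(E,S)$; the paper's proof of Theorem~6.1 has the same scope.
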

We postpone the proof of Theorem 3.10 and Theorem 3.11 to Section $6$. There we will determine the
structure of $Mor_{\pi}(C,S)$ for any smooth elliptic fibration $\pi$ in a more uniform way, see
Theorem 6.1. Then Theorem 3.10 and Theorem 3.11 are just direct corollaries of Theorem 6.1.

\subsection{The case $q(S)=0$}{The main result of this part is Theorem \ref {M4}.
\begin{lemma}If $q(S)=0$, and there exists \xymatrix{S\ar[r]|\pi &C\ar@/_/@{>}[l]|\sigma} , then $C$
is rational and  $\pi$ is an elliptic fibration.\end{lemma}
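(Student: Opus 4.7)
The plan is to handle the two conclusions separately, and both follow quickly from what is already in the paper.

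First I would show that $g(C)=0$. The key input is the existence of the section $\sigma$. Since $\pi\circ\sigma=\mathrm{id}_C$, pullback on cohomology gives $\sigma^*\circ\pi^*=\mathrm{id}$ on $H^1(\mathcal{O})$, so $\pi^*:H^1(C,\mathcal{O}_C)\to H^1(S,\mathcal{O}_S)$ is injective. (Equivalently, combine Lemma 1.2 with the Leray spectral sequence: $\pi_*\mathcal{O}_S=\mathcal{O}_C$ yields $H^1(C,\mathcal{O}_C)\hookrightarrow H^1(S,\mathcal{O}_S)$.) By hypothesis $q(S)=h^1(S,\mathcal{O}_S)=0$, so $g(C)=h^1(C,\mathcal{O}_C)=0$, which forces $C\simeq\mathbb{P}^1$.

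Next I would show the general fibre is an elliptic curve. By Theorem \ref{3.1}, the assumptions $\kappa(S)=0$ and $q(S)=0$ force $S$ to be either a $K3$ surface (where $K_S=0$) or an Enriques surface (where $2K_S=0$). In both cases $K_S\cdot F=0$ for the class $F$ of a general fibre. Since two distinct fibres are disjoint, $F^2=0$. Applying the adjunction formula,
\begin{equation*}
2g(F)-2 = K_S\cdot F + F^2 = 0,
\end{equation*}
so $g(F)=1$. By generic smoothness, the general fibre $F$ is smooth, hence a smooth curve of arithmetic and geometric genus one, i.e., an elliptic curve. Therefore $\pi$ is an elliptic fibration.

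No serious obstacle is anticipated: the use of the section to inject $H^1(C,\mathcal{O}_C)$ into $H^1(S,\mathcal{O}_S)$ is the only slightly structural step, and everything else is a direct computation via Theorem \ref{3.1} and adjunction. One could alternatively invoke Iitaka's inequality (Theorem 2.8) to bound $\kappa(F)+\kappa(C)\le 0$ and then rule out $g(F)\ge 2$ that way, but the adjunction argument above is cleaner because it pins down $g(F)$ exactly.
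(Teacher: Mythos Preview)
Your proof is correct and the argument for the generic fibre being elliptic is identical to the paper's: invoke Theorem~\ref{3.1} to get $K_S\sim_{num}0$, then apply the genus formula to a general fibre.

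For $g(C)=0$ there is a small but genuine difference. The paper argues via Albanese varieties: the surjection $\pi$ induces a surjection $Alb(S)\twoheadrightarrow Alb(C)$, hence $g(C)=\dim Alb(C)\le\dim Alb(S)=q(S)=0$. You instead use the section directly to split $\pi^*$ on $H^1(\mathcal{O})$, giving the injection $H^1(C,\mathcal{O}_C)\hookrightarrow H^1(S,\mathcal{O}_S)$. Your route is arguably more elementary (no Albanese machinery, just functoriality of pullback), while the paper's argument has the mild advantage of not needing the section at all for this step---surjectivity of $\pi$ suffices. Since both ultimately rest on the identity $q=\dim Alb$, the difference is cosmetic.
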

\begin{proof} If $q(S)=0$, then by Theorem \ref {3.1},  $S$ is $K3$ or Enriques. In both cases,
$K_S\sim_{num}0$. Pick an arbitrary general fibre $F$ of $\pi$. By the genus formula, we have:
\begin{equation}g(F)=1+\frac{1}{2}(F\cdot K_S+ F^2)=1\end{equation} so $\pi$ is an elliptic
fibration. On the other hand, $\pi$ induces a surjective morphism $\tilde\pi: Alb(S)\longrightarrow
Alb(C)$. So \begin{equation}g(C)=dim Alb(C)\le dim Alb(S)=q(S)=0.\end{equation}Then $C$ is
rational. \end{proof}
\begin{lemma}Every elliptic fibration $\pi:S\longrightarrow \mathbb P^1$ of an Enriques surface $S$
has exactly two multiple fibres, $2F$ and $2F'$. \end{lemma}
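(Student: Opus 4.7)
The plan is to apply Kodaira's canonical bundle formula to the relatively minimal elliptic fibration $\pi: S \to \mathbb{P}^1$, then exploit the fact that $K_S$ is numerically trivial on an Enriques surface to force the number and multiplicities of the multiple fibres.

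First I would invoke Kodaira's formula for the canonical bundle of a relatively minimal elliptic fibration:
$$K_S \sim \pi^* M + \sum_i (m_i - 1) F_i,$$
where $m_i F_i$ ranges over the multiple fibres and $M$ is a divisor on $\mathbb{P}^1$ of degree $2g(\mathbb{P}^1) - 2 + \chi(\mathcal{O}_S) = -2 + 1 = -1$; here I use $\chi(\mathcal{O}_S) = 1 - q + p_g = 1$ from Theorem \ref{3.1}. Since $S$ is minimal (of Kodaira dimension zero) its elliptic fibration is automatically relatively minimal, so the formula applies with this $M$.

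Next I would pass to numerical classes. Writing $f$ for the class of a general fibre, all fibres are numerically equivalent, so $m_i F_i \equiv f$ and thus $F_i \equiv f/m_i$ in $\mathrm{Num}(S) \otimes \mathbb{Q}$. Since $K_S \equiv 0$ (because $2K_S = 0$ in $\mathrm{Pic}(S)$), the canonical bundle formula becomes
$$0 \equiv \Big(-1 + \sum_i \big(1 - 1/m_i\big)\Big) f,$$
and as $f \not\equiv 0$ this forces the numerical constraint $\sum_i (1 - 1/m_i) = 1$.

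Finally I would solve this equation over the integers $m_i \geq 2$. Each term $1 - 1/m_i$ lies in $[1/2, 1)$, so if $n$ is the number of multiple fibres then $n/2 \leq 1 < n$, forcing $n = 2$. The remaining equation $1/m_1 + 1/m_2 = 1$ with $m_1, m_2 \geq 2$ integers admits only $m_1 = m_2 = 2$, yielding exactly the two multiple fibres $2F$ and $2F'$. The main obstacle is purely the input — Kodaira's formula with the correct degree of $M$ — since the subsequent arithmetic is elementary; it is precisely the value $\deg M = -1$ (balanced by $\chi(\mathcal{O}_S) = 1$) that makes the count come out to two fibres of multiplicity two rather than zero or more.
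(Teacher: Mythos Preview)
Your argument is correct and is the standard one: the paper itself does not give a proof but simply cites \cite{BPV}, Chapter~8, Lemma~17.2, and the argument there is precisely Kodaira's canonical bundle formula combined with $\chi(\mathcal O_S)=1$ and $K_S\equiv 0$, exactly as you outline. The only thing to be mildly careful about is the input that the fibration is relatively minimal (no $(-1)$-curves in fibres), which you correctly note follows from $S$ being minimal with $\kappa(S)=0$.
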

\begin{proof}~\cite {BPV}, Chapter 8, Lemma 17.2.\end{proof}
\begin{theorem}\label {M4}If $S$ is an Enriques surface or a general $K3$ surface, there is no
\xymatrix{S\ar[r]|\pi &C\ar@/_/@{>}[l]|\sigma} .(Hence there is no non-trivial semigroup structure
on $S$.)\end{theorem}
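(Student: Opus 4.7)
The plan is to combine the reduction achieved in Lemma 3.12 with intersection-theoretic obstructions specific to each of the two surface types. By Lemma 3.12, the existence of $\xymatrix{S\ar[r]|\pi &C\ar@/_/@{>}[l]|\sigma}$ already forces $C\simeq\mathbb{P}^1$ and $\pi$ to be an elliptic fibration, so in both cases it suffices to rule out a section of an elliptic fibration over $\mathbb{P}^1$.

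For the Enriques case, I would use Lemma 3.13: every elliptic fibration has exactly two multiple fibres $2F$ and $2F'$. Since $\pi\circ\sigma=\mathrm{id}_C$, the section curve $\Sigma:=\sigma(\mathbb{P}^1)$ satisfies $\Sigma\cdot F_{\mathrm{gen}}=1$ for a general (smooth) fibre $F_{\mathrm{gen}}$. All fibres of $\pi$ are algebraically, hence numerically, equivalent, so the scheme-theoretic fibre $2F$ is numerically equivalent to $F_{\mathrm{gen}}$. This gives the parity obstruction
\begin{equation*}
1=\Sigma\cdot F_{\mathrm{gen}}=\Sigma\cdot(2F)=2(\Sigma\cdot F),
\end{equation*}
which is impossible because $\Sigma\cdot F\in\mathbb{Z}$.

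For the general $K3$ case, I would invoke the fact that a general $K3$ surface has Picard number $\rho(S)=1$, so $\mathrm{Pic}(S)\otimes\mathbb{Q}$ is generated by a single ample class $H$ with $H^2>0$. Any fibre class $F$ of $\pi$ satisfies $F^2=0$ (distinct fibres being disjoint), but also $F=nH$ in $\mathrm{Pic}(S)\otimes\mathbb{Q}$ for some $n\neq 0$, yielding $F^2=n^2H^2>0$, a contradiction. Hence no elliptic fibration $\pi:S\to\mathbb{P}^1$ can exist in the first place, let alone one with a section.

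The main obstacle, if any, is being careful in the Enriques argument about the divisor representing the multiple fibre: one must use that the scheme-theoretic fibre (not the reduced curve $F$) is the one linearly equivalent to the general fibre, which is exactly what justifies the numerical equivalence $2F\sim_{\mathrm{num}}F_{\mathrm{gen}}$ used above. Once that point is fixed, both cases are short. I would conclude by noting that since Lemma 3.12 covers the only possible $C$ and $\pi$, these two contradictions together imply that no non-trivial semigroup structure can exist on $S$.
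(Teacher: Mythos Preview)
Your argument is correct and, for the Enriques case, essentially identical to the paper's: both use Lemma~3.13 to produce a multiple fibre $2F$ and then derive the parity contradiction $1=\sigma(C)\cdot F_{\mathrm{gen}}=2\,\sigma(C)\cdot F$.

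For the general $K3$ case the two proofs diverge. The paper does not argue at all; it simply cites Proposition~7.1.3 of Huybrechts' \emph{Lectures on $K3$ surfaces}. You instead supply a self-contained argument: interpreting ``general'' as Picard rank one, you note that a fibre class $F$ would have $F^2=0$ while also being a nonzero rational multiple of the ample generator $H$, forcing $F^2>0$. This is cleaner and more transparent than a bare citation, and it makes explicit what ``general'' is being used for. The only cost is that you must commit to the interpretation $\rho(S)=1$ for ``general $K3$''; this is the standard reading, and indeed is what the cited result in Huybrechts ultimately rests on, so nothing is lost.
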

\begin{proof}If $S$ is an Enriques surface and  such a diagram exist, then $C$ is rational,  and
$\pi$ is an elliptic fibration. By Lemma 3.11, there exists a multiple fibre $2F$. For a general
smooth fibre $F_0$,  $\sigma(C)\cdot F_0=1$, but under our assumptions, $\sigma(C)\cdot
F_0=\sigma(C)\cdot 2 F\ge2$, which is a contradiction. \\
The case for  $S$ is a general $K3$ surface, see Proposition 7.1.3, ~\cite{Daniel Huybrechts}
\end{proof}}
\begin{theorem} For any  $K3$ surface $S$ and any fibration $\pi:S\longrightarrow C$,
$Mor_{\pi}(C,S)$ consists of reduced isolated points.
 \end{theorem}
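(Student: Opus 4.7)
The plan is to apply directly the deformation-theoretic description of $Mor_{\pi}(C,S)$ recalled in the passage preceding Theorem 3.10: the open immersion $Mor_{\pi}(C,S)\hookrightarrow Hilb(S)$ sending $\sigma\mapsto \sigma(C)$ identifies the Zariski tangent space at $\sigma$ with $H^0(\sigma(C),\mathcal N_{\sigma(C)/S})$. Thus "reduced isolated points" will follow as soon as we show that this space is zero at every section.

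First I would pin down the base curve. Since $\pi$ is a fibration, Lemma 1.2 gives $\pi_*\mathcal O_S=\mathcal O_C$, so the Leray spectral sequence yields the injection
\[
H^1(C,\mathcal O_C)\hookrightarrow H^1(S,\mathcal O_S).
\]
On a K3 surface $q(S)=h^1(\mathcal O_S)=0$, so $g(C)=0$ and $C\cong\mathbb P^1$. (If $Mor_{\pi}(C,S)$ is empty the statement is vacuous, so from now on I assume a section $\sigma$ exists.)

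Next I would compute the normal bundle of the section. Because $\sigma$ is a closed immersion of a smooth curve and $\sigma(C)\cong\mathbb P^1$, together with $K_S=\mathcal O_S$, the adjunction formula gives
\[
-2=2g(\sigma(C))-2=\sigma(C)^{2}+\sigma(C)\cdot K_S=\sigma(C)^{2},
\]
so $\mathcal N_{\sigma(C)/S}=\mathcal O_S(\sigma(C))|_{\sigma(C)}$ is a line bundle of degree $-2$ on $\mathbb P^1$, hence $\mathcal N_{\sigma(C)/S}\cong\mathcal O_{\mathbb P^{1}}(-2)$.

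Finally $H^0(\mathbb P^1,\mathcal O(-2))=0$, so the tangent space of $Mor_{\pi}(C,S)$ at $\sigma$ vanishes, making $\sigma$ a reduced isolated point. There is really no serious obstacle in this argument: once the base is forced to be $\mathbb P^1$ by $q(S)=0$, the negativity of the normal bundle is automatic from $K_S=0$ and adjunction. The one thing worth noting is that the obstruction space $H^1(\mathbb P^1,\mathcal O(-2))\cong\mathbb C$ is non-zero, but this is harmless: since the tangent space is already trivial, there are simply no first-order deformations that could be obstructed.
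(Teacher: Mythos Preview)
Your proof is correct and follows essentially the same route as the paper: identify $C\cong\mathbb P^1$ from $q(S)=0$, then use adjunction together with $K_S=0$ to obtain $\mathcal N_{\sigma(C)/S}\cong\mathcal O_{\mathbb P^1}(-2)$ and conclude that the tangent space to $Mor_{\pi}(C,S)$ at $\sigma$ vanishes. The only cosmetic differences are that you invoke the Leray spectral sequence for $g(C)=0$ (the paper uses the Albanese map in Lemma~3.12) and compute the normal bundle via the self-intersection number rather than writing $\mathcal N=\omega_S^{-1}|_{\sigma(C)}\otimes\omega_{\sigma(C)}$ directly.
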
\begin{proof}Since $q(S)=0$, $C\simeq \mathbb P^1$. Let us determine the structure of
 $Mor_{\pi}(\mathbb P^1, S)$. For any section $\sigma$ of $\pi$, the tangent space of $Hilb(S)$ at
 $\sigma(\mathbb P^1)$ is $H^0(\mathbb P^1, \mathcal N_{\sigma(\mathbb P^1)/S})$, where $ \mathcal
 N_{\sigma(\mathbb P^1)/S})$ is the normal bundle of $\sigma(\mathbb P^1)$ in $S$. Since  $\mathbb
 P^1$ and $S$ are both smooth, by the adjunction formula,  $\mathcal N_{\sigma(\mathbb P^1)/S} =
 \omega^{-1}_S\mid _{\sigma(\mathbb P^1)}\otimes \omega _{\mathbb P^1}$. By the definition of $K3$,
 $\omega_S$  is trivial, so $ \mathcal N_{\sigma(\mathbb P^1)/S}$ is $\mathcal O(-2)$, and $\dim
 H^0(\mathbb P^1, \mathcal N_{\sigma(\mathbb P^1)/S})=0$. So $ \dim T_{\sigma(\mathbb P^1)}(Hilb
 (S))=0$, so the dimension of $Mor_{\pi}(\mathbb P^1, S)$ at $\sigma(\mathbb P^1)$ is zero.
 \end{proof}
\section{The case $\kappa(S)=1$}{
In this section, we always assume $\kappa(S)=1$. The main result  is Theorem 4.2.
\begin{lemma} There exists an elliptic fibration $\varphi: S\longrightarrow B$, where $B$ is a
smooth projective curve.\end{lemma}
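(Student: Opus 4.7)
The plan is to produce the fibration via the pluricanonical system and then use the genus formula to identify the generic fibre as elliptic. First I would recall that $\kappa(S)=1$ means, by definition, that for $n$ sufficiently divisible the image $\varphi_{|nK_S|}(S)$ has dimension $1$. Picking such an $n$ large enough that $\varphi_n:=\varphi_{|nK_S|}$ has $1$-dimensional image $B_0\subset\mathbb{P}^N$, I would then pass to the Stein factorization $S\xrightarrow{\varphi} B\xrightarrow{} B_0$, where $B$ is a normal projective curve and $\varphi$ has connected fibres. Normality of a curve forces smoothness, so $B$ is a smooth projective curve.

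Next I would show that the general fibre $F$ of $\varphi$ is an elliptic curve. Since fibres of $\varphi$ are algebraically equivalent, $F^2=0$. For the intersection with $K_S$, note that two general members $D_1,D_2\in|nK_S|$ pull back along $\varphi_n$ to linearly equivalent effective divisors on $B_0$; since $F$ is contained in a fibre of $\varphi_n$, the intersection $D_1\cdot F$ can be computed by moving $D_1$ so that it does not contain $F$, giving $D_1\cdot F=0$ and hence $nK_S\cdot F=0$, i.e.\ $K_S\cdot F=0$. The adjunction formula then yields
\begin{equation}
2g(F)-2=F\cdot(F+K_S)=0,
\end{equation}
so $g(F)=1$. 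Because $F$ is a smooth (general fibre of a fibration to a smooth curve) connected curve of genus one, $F$ is elliptic, and $\varphi$ is therefore an elliptic fibration.

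The only delicate point is verifying that $K_S\cdot F=0$; this is precisely the reason the pluricanonical map contracts the fibres one is looking for. If one prefers, this step can be replaced by invoking the Iitaka--type fibration theorem (which gives a morphism to a curve whose fibres are the ``constant Kodaira dimension'' loci), together with the observation that a general fibre has Kodaira dimension $\kappa(S)-\dim B=0$ and is a smooth projective curve, hence elliptic. Either packaging leads to the same conclusion, and the only remaining ingredient—smoothness of $B$—is automatic from Stein factorization.
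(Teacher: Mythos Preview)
Your argument is the standard one and is essentially what the paper invokes: the paper does not prove this lemma but simply cites Beauville, Chapter~IX, Proposition~2, whose proof is precisely the pluricanonical/Iitaka construction you sketch.

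One technical point deserves attention. You pass directly to the Stein factorization of $\varphi_{|nK_S|}$ and later compute $nK_S\cdot F$ by ``moving $D_1$ off $F$'', both of which implicitly assume that $\varphi_{|nK_S|}$ is a \emph{morphism}, i.e.\ that $|nK_S|$ (or at least its moving part) is base-point-free. This is not automatic from $\kappa(S)=1$ alone. The missing input is that on a minimal surface with $\kappa(S)=1$ one has $K_S$ nef and $K_S^2=0$; writing $|nK_S|=Z+|M|$ with $Z$ the fixed part, nefness gives $K_S\cdot Z=K_S\cdot M=0$, and then Hodge index forces $M^2=0$. A moving linear system with no fixed components and $M^2=0$ has no base points, so $|M|$ defines an honest morphism to a curve, after which your Stein factorization and adjunction computation go through verbatim. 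Your alternative packaging via the Iitaka fibration theorem already contains this step, so either way the proof is complete once this is made explicit.
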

\begin{proof}~\cite {Beauville}, Chapter 9, Proposition 2.\end{proof}
So we have the following diagram:\\
$$\xymatrix{S\ar[r]|\pi\ar[d]_{\varphi} &C\ar@/_/@{>}[l]|\sigma\\B}.$$
\textbf{Remark 2):} If $\pi=\varphi $, then $\varphi$ is an elliptic fibration with a section. This
implies that the generic fibre $F$ of $\varphi$ is a smooth curve of genus 1 over the functional
field of $B$, and the set of its rational points is nonempty. So $F$ is an elliptic curve and it can
be imbedded into $\mathbb P^2_{k(B)}$ as a cubic curve. For any two minimal surfaces $S_i$ and two
elliptic fibrations $\varphi _i: S_i\longrightarrow B$, if their generic fibres are isomorphic, then
the two fibrations $\varphi _i$ are isomorphic. So to give a minimal surface $S$ and an elliptic
fibration  $\varphi: S\longrightarrow B$ is equivalent to give a homogeneous cubic equation in
$\mathbb P^2_{k(B)}$. So in what follows, we focus on the case $\pi \ne \varphi$.
\begin{theorem}\label{M5}If $g(C)\ge 1$, then $S\simeq B\times C$ and $\varphi$ is the first
projection, $\pi$ is the second projection. \end{theorem}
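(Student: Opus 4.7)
The plan is to show that the combined morphism $\psi := (\varphi,\pi) \colon S \to B \times C$ is an isomorphism, which directly yields $\varphi = \mathrm{pr}_1$ and $\pi = \mathrm{pr}_2$. First I would verify that $\psi$ is finite and surjective. By the Rigidity Lemma applied in both directions, $\psi$ contracts no curve: indeed, if any fibre of $\varphi$ were contracted by $\pi$, then $\pi$ would factor through $\varphi$ and the two fibrations would be isomorphic, contradicting the running assumption $\pi \neq \varphi$ of Remark~2. Hence $\psi$ is quasi-finite and projective, therefore finite; its image is an irreducible closed subvariety of $B\times C$ of dimension~$2$, hence equals $B\times C$.

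Next I would pin down $g(C)=1$. The general fibre $E_b = \varphi^{-1}(b)$ of the elliptic fibration is a smooth elliptic curve, and by the finiteness step $\pi|_{E_b}\colon E_b \to C$ is non-constant; Riemann--Hurwitz then gives $g(C)\le g(E_b)=1$, so $g(C)=1$ by hypothesis. The crux is to compute the degree $\delta := \deg\psi = F_\varphi \cdot F_\pi$ and to establish $\delta=1$. I would do so by analysing the composite $\varphi\circ\sigma \colon C\to B$. If this composite is constant, then $\sigma(C)$ lies inside a single fibre of $\varphi$; since $\sigma(C)\simeq C$ has arithmetic genus one, while every component of a reducible fibre of an elliptic fibration is a rational curve, $\sigma(C)$ must coincide with a smooth elliptic fibre $E_{b_0}$. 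Then numerically $\sigma(C)\equiv F_\varphi$, and the section identity $\sigma(C)\cdot F_\pi = 1$ immediately gives $F_\varphi\cdot F_\pi = 1$.

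The main obstacle is to rule out the opposite situation, namely that $\varphi\circ\sigma$ is surjective of some degree $d\ge 1$. In that case Riemann--Hurwitz forces $g(B) \le g(C)=1$, while the non-constant maps $\pi|_{E_b}\colon E_b \to C$ between elliptic curves have fixed degree $\delta$ as $b$ varies. By the rigidity of isogenies of a given degree in the moduli of elliptic curves, $j(E_b)$ is then constant, so $\varphi$ is isotrivial, and $S$ is birational to a quotient $(B'\times E)/G$ with $B'\to B$ an étale Galois cover and $G$ acting on $E$ by translations. The section $\sigma$ lifts to a $G$-equivariant morphism $E\to B'$; because $\kappa(S)=1$ forces $g(B')\ge 2$, any such morphism is constant, located at a $G$-fixed point of $B'$, and unwinding the quotient shows that $\varphi\circ\sigma$ must in fact be constant, contradicting the case assumption.

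Finally, with $\delta=1$ established, $\psi$ is a finite birational morphism between smooth projective surfaces, hence an isomorphism by Zariski's main theorem. The two fibrations $\varphi$ and $\pi$ then correspond to the two projections of $B\times C$, and the identity $\kappa(S)=\kappa(B)+\kappa(C)=\kappa(B)$ forces $g(B)\ge 2$, consistent with the classification.
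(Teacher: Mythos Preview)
Your overall strategy---show that $\varphi\circ\sigma$ is constant so that $\sigma(C)$ is a fibre of $\varphi$, then conclude that $(\varphi,\pi)$ has degree one---is exactly the paper's. But both branches of your case split have gaps.

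In the constant case you assert that $\sigma(C)$ coincides with a \emph{smooth fibre}, hence $\sigma(C)\equiv F_\varphi$ numerically. But Lemma~4.4 only tells you that every fibre of $\varphi$ is a \emph{multiple} $mE_0$ of a smooth elliptic curve; if $\sigma(C)=E_0$ sits inside such a multiple fibre you obtain $F_\varphi\equiv m\,\sigma(C)$ and hence $\delta=F_\varphi\cdot F_\pi=m$, not $1$. The paper closes this by first base-changing along a ramified Galois cover $B'\to B$ (Lemma~4.5) to make the pulled-back fibration $\varphi'$ smooth, running the smooth-case argument to get $S'\simeq C\times B'$, and then checking that the Galois group acts trivially on the $C$ factor, so that $S\simeq C\times(B'/G)=C\times B$.

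In the surjective case your isotriviality observation (only finitely many elliptic curves admit a degree-$\delta$ isogeny to the fixed curve $C$, so $j(E_b)$ is constant) is correct and is a genuinely different idea from the paper's. The rest, however, is garbled: there is no reason $G$ should act on $E$ by translations, and ``$\sigma$ lifts to a $G$-equivariant morphism $E\to B'$'' has the arrow backwards. What one actually needs is that the \emph{\'etale cover $\tilde C$ of $C$} obtained by pulling $\sigma$ back along the quotient $B'\times E\to S$ maps to $B'$; since $\tilde C$ is still genus one while $\kappa(S)=1$ forces $g(B')\ge 2$, that map is constant, contradicting surjectivity. But for this you must know the quotient map is \'etale, i.e.\ that $G$ acts freely on $B'\times E$, which again comes down to controlling multiple fibres via Lemma~4.5. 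The paper sidesteps all of this by a direct case split on $g(B)$ (Lemma~4.7): if $g(B)=1$, any multiple fibre would force $\varphi|_{\sigma(C)}\colon C\to B$ to ramify, impossible between elliptic curves, so $\varphi$ is smooth and \'etale trivialisation (Lemma~4.6) gives $\kappa(S)=0$; if $g(B)=0$, a Hurwitz count on $\varphi|_{\sigma(C)}$ yields $\sum_i(1-1/m_i)\le 2$, which together with $\chi(\mathcal O_S)=0$ contradicts Wall's criterion $\delta(\varphi)>0$ for $\kappa(S)=1$.
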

The key point to prove Theorem 4.2 is to show that $\sigma $ maps $C$ into a fibre of $\varphi$.
First we want to determine the singular fibres of the elliptic fibration $\varphi$. For this, we
recall ``Kodaira's table of singular fibres'' .
\begin{lemma}{(Kodaira's table of singular fibres)}\\
Let $f:S\longrightarrow \Delta$ be an elliptic fibration over the unit disk $\Delta$, such that all
fibres $S_x, x\ne0$, are smooth. We  list all possibilities for the central fibre $S_0$:
\begin{enumerate}[a)]
\item $S_0$ is irreducible. Then $S_0$ is either smooth elliptic, or rational with a node, or
    rational with a cusp.
\item $S_0$ is reducible. Then  every component $C_i$ of $S_0=\sum n_iC_i$ is a ($-2$)- rational
    curve.
\item $S_0$ is a multiple fibre, we write it as $S_0=mS_0'$. Then $S_0'$ is smooth elliptic, or
    rational with a node, or of type b) \end{enumerate}\end{lemma}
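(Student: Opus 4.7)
The plan is to reduce everything to intersection-theoretic identities on the components of $S_0$, using adjunction together with two numerical facts: $S_0^2 = 0$ and $K_S\cdot S_0 = 0$. First I would establish these. The general fibre $F$ is a smooth curve of genus one, so by adjunction $K_S\cdot F = 0$; since $S_0$ is algebraically equivalent to $F$ in the family, this gives $K_S\cdot S_0 = 0$ and $S_0\cdot C_i = 0$ for every irreducible component $C_i$ of $S_0$ (take a nearby smooth fibre, disjoint from $C_i$).

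Next I would treat case a). If $S_0$ is irreducible and reduced, then $S_0\cdot C_1 = 0$ immediately gives $C_1^2 = 0$, and combining with $K_S\cdot C_1 = 0$ the adjunction formula forces $p_a(C_1) = 1$. The classical classification of irreducible complete curves of arithmetic genus one over $\mathbb{C}$ then shows $C_1$ is smooth elliptic, or rational with a single node, or rational with a single cusp.

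For case b), write $S_0 = \sum n_i C_i$ with at least two components. Distinct irreducible components satisfy $C_i\cdot C_j \geq 0$, so the identity $0 = S_0\cdot C_j = n_j C_j^2 + \sum_{i\ne j} n_i\, (C_i\cdot C_j)$ forces $C_j^2 \leq 0$. If equality held, $C_j$ would be numerically disconnected from the rest, contradicting the connectedness of $S_0$, hence each $C_j^2 < 0$. Next I would invoke relative minimality of $f$ (which holds because $S$ is minimal in the Enriques sense, so no $(-1)$-curve is contained in a fibre) to obtain $K_S\cdot C_j \geq 0$ for every $j$. Combined with $\sum n_j\,(K_S\cdot C_j) = K_S\cdot S_0 = 0$ and $n_j > 0$, this forces $K_S\cdot C_j = 0$ for every $j$. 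Adjunction then reads $C_j^2 = 2 p_a(C_j) - 2$; together with $C_j^2 < 0$ this forces $p_a(C_j) = 0$ and $C_j^2 = -2$, as required.

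Finally, case c) follows from the previous two: if $S_0 = m S_0'$, then $S_0'$ is a positive rational multiple of $S_0$, so the numerical relations $S_0'\cdot C = 0$ and $K_S\cdot S_0' = 0$ transfer to the components of $S_0'$, and the analysis of a) or b) applies to $S_0'$ according as its support is irreducible or reducible. The main obstacle, and the place where relative minimality really enters, is the lower bound $K_S\cdot C_j \geq 0$ on components of a reducible fibre: any component violating it would be a smooth rational curve of self-intersection $-1$, i.e.\ a $(-1)$-curve contained in a fibre, which could be contracted without affecting the base and would contradict the minimality of $S$.
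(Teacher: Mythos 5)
The paper offers no argument of its own here -- it simply cites \cite{BPV}, Chapter V, Proposition 8.1 -- so the only question is whether your sketch actually proves the stated trichotomy. Your treatment of a) and b) is the standard one and is essentially complete: Zariski's observation that $S_0\cdot C_j=0$ for every component, $K_S\cdot S_0=0$ from adjunction on a nearby smooth fibre, connectedness of $S_0$ to force $C_j^2<0$ in the reducible case, and relative minimality (correctly traced back to the minimality of the ambient projective surface, which in the paper's setting has $\kappa(S)\ge 0$ and hence no $(-1)$-curves at all) to get $K_S\cdot C_j\ge 0$ and then $C_j^2=-2$, $p_a(C_j)=0$. That is fine.

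The genuine gap is in case c). The statement asserts that the reduction $S_0'$ of a multiple fibre is smooth elliptic, rational with a node, or of type b) -- the cuspidal rational curve is \emph{excluded}. Your argument for c) is purely numerical ($(S_0')^2=0$, $K_S\cdot S_0'=0$, adjunction), and these data cannot distinguish a nodal from a cuspidal curve: both have $p_a=1$, so your reduction to case a) only yields the weaker conclusion that $S_0'$ could also be cuspidal, i.e.\ it does not prove the lemma as stated (and multiple fibres with cuspidal reduction genuinely do not occur, so the exclusion is real content, not a redundancy). Ruling this out needs a non-numerical input, for instance Kodaira's observation (BPV III.8.3) that for a multiple fibre $mS_0'$ the normal sheaf $\mathcal O_{S_0'}(S_0')$ is a torsion element of order exactly $m$ in $\operatorname{Pic}(S_0')$: since $\operatorname{Pic}^0$ of a cuspidal rational curve is the additive group $\mathbb G_a$, which is torsion-free, $m=1$ there, whereas the node (and cycles of $(-2)$-curves) have $\operatorname{Pic}^0\simeq\mathbb G_m$ and the smooth elliptic curve has itself, both carrying $m$-torsion. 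Alternatively one can argue via local monodromy or the canonical bundle formula, but some such ingredient must be added before your case c) closes.
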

\begin{proof} ~\cite {BPV}, Chapter 5, Proposition 8.1.\end{proof}
\begin{lemma}If $g(C)\ge1$, then $g(C)$ must be $1$. Moreover any fibre of $\varphi$ is a multiple
of a smooth elliptic curve.\end{lemma}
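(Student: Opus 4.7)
The plan is to reduce both claims to a single observation: whenever $\pi \ne \varphi$ as fibrations, the Rigidity Lemma (Lemma 2.7) prevents $\pi$ from contracting any entire fibre of $\varphi$. Combining this with Riemann--Hurwitz on the one hand and Kodaira's table (Lemma 4.3) on the other gives both statements.

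\emph{Step 1: $g(C)=1$.} Suppose for contradiction that $g(C)\ge 2$. A general fibre $F_\varphi$ of $\varphi$ is smooth elliptic, so the restriction $\pi|_{F_\varphi}\colon F_\varphi \to C$ is a morphism from a genus-$1$ curve to a curve of genus $\ge 2$; by Riemann--Hurwitz any non-constant such map would give $0 = 2g(F_\varphi)-2 \ge \deg(\pi|_{F_\varphi})(2g(C)-2)\ge 2$, so $\pi|_{F_\varphi}$ must be constant. Thus $\pi$ contracts an entire fibre of $\varphi$, and Lemma 2.7 then forces $\pi$ and $\varphi$ to agree as fibrations, contradicting the standing assumption $\pi \ne \varphi$ from Remark 2). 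Hence $g(C)=1$.

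\emph{Step 2: every fibre of $\varphi$ is a multiple of a smooth elliptic curve.} Since $g(C)=1$ and $\pi \ne \varphi$, the same rigidity argument shows that $\pi|_{F_\varphi}\colon F_\varphi\to C$ cannot be constant for a general $F_\varphi$; being a non-constant map between elliptic curves, it is finite of some positive degree $d$, so $d = F_\varphi \cdot F_\pi > 0$, where $F_\pi$ denotes a general fibre of $\pi$. Now let $F_0$ be any fibre of $\varphi$ and write $F_0 = \sum m_i R_i$. If $R_i$ is a rational irreducible component, then $\pi(R_i)$ is a single point of $C$ (any morphism from a rational curve to an elliptic curve is constant), so $R_i$ is contained in a fibre of $\pi$ and therefore $F_\pi \cdot R_i = 0$. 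Inspecting Lemma 4.3, any fibre of $\varphi$ that is not a multiple of a smooth elliptic curve has every component of its reduction rational (either a single nodal or cuspidal rational curve, or a configuration of $(-2)$-curves, possibly with an overall multiplicity); in that case $F_\pi \cdot F_0 = \sum m_i(F_\pi \cdot R_i) = 0$, contradicting $F_\pi \cdot F_0 = d > 0$. Therefore $F_0 = mF$ with $F$ smooth elliptic.

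The main obstacle I anticipate is purely bookkeeping: one has to walk through Kodaira's list carefully and verify that every fibre type other than ``$m$-times a smooth elliptic curve'' has all its components rational, so that the rigidity + intersection-number contradiction closes. Once that case analysis is in place, the proof is simply the combination of Riemann--Hurwitz (for the genus statement) and the observation that rational subcurves are forced into fibres of $\pi$ (for the fibre statement).
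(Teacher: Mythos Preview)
Your proof is correct and follows the same strategy as the paper: use Riemann--Hurwitz to rule out $g(C)\ge 2$ via the Rigidity Lemma, then use Kodaira's table to see that any non-multiple-of-elliptic fibre has only rational components, which are contracted by $\pi$ since $C$ is elliptic. The only cosmetic difference is in Step~2: the paper observes that the (connected) singular fibre $F_0$ is mapped to a single point and applies the Rigidity Lemma directly, whereas you phrase the contradiction numerically via $F_\pi\cdot F_0 = 0 \ne F_\pi\cdot F_\varphi$; these are two ways of saying the same thing.
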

\begin{proof} Assume $g(C)\ge 2$. For an arbitrary smooth fibre $F$ of $\varphi$, \begin{equation}
1=g(F)<g(C)\end{equation} so  $\pi$ contracts $F$. By the ``Rigidity lemma'', $\pi$ will factor
through $\varphi$, which contradicts  our assumptions. So $g(C)=1$. \\ Let  $F_0$ be a singular
fibre. If $F_0 $is not a multiple of a smooth elliptic curve, then by ``Kodaira's table of singular
fibres'', $F_0$ is a sum of irreducible rational curves. Then $F_0$ will be contracted by $\pi$, and
by the ``Rigidity lemma'' again, $\pi$ will factor through $\varphi$. This contradicts our
assumptions.
So every fibre of $\varphi$ is a smooth elliptic curve, or a multiple of a smooth elliptic curve.
\end{proof}
In view of Lemma 4.4, the elliptic fibration $\varphi$ is ``almost smooth'': its singular fibres
are only multiples of smooth curves. In the following lemma, we will see that after performing a
base change, we can eliminate all the multiple fibres, and get a  smooth elliptic fibration.
\begin{lemma}Let $\varphi: S\longrightarrow B$ be a morphism from a surface onto a smooth curve
whose fibres are multiples of smooth curves. Then there are a ramified Galois cover
$q:B'\longrightarrow B$ with Galois group $G$, a surface $S'$ and a commutative cartesian square: \\
$$\xymatrix{S'\ar[r]^{q'}\ar[d]^{p'}&S\ar[d]^{p}\\B'\ar[r]^q &B}$$ such that the action of $G$ on
$B'$ lifts to $S'$, $q'$ induces an isomorphism $S'/G\simeq S$ and $p'$ is smooth.\end{lemma}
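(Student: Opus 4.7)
The plan is to build a ramified Galois cover $q : B' \to B$ whose ramification index at every preimage of a multiple fibre matches the multiplicity of that fibre, and then to define $S'$ as the normalization of the fibre product $S \times_B B'$. To begin, I would identify the finite set $b_1, \ldots, b_r \in B$ of points over which $\varphi$ has multiple fibres, with multiplicities $m_i$. By standard covering theory---Riemann's existence theorem, or an explicit Kummer/cyclic construction supplemented with auxiliary ramification to satisfy Riemann--Hurwitz---there exists a finite Galois cover $q : B' \to B$ with Galois group $G$ whose ramification index at every point of $q^{-1}(b_i)$ equals exactly $m_i$.

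Next I would set $\widetilde S := S \times_B B'$ and let $S'$ denote its normalization, with induced projections $q' : S' \to S$ and $p' : S' \to B'$. The critical local computation is near a multiple fibre $m_i E_i$: there $\varphi$ has the local form $t = u^{m_i}$ (with $u$ a local equation for $E_i$) and $q$ has the form $t = s^{m_i}$, so $\widetilde S$ is locally cut out by $s^{m_i} - u^{m_i} = \prod_{\zeta^{m_i} = 1}(s - \zeta u)$, which factors completely into linear terms. Hence $\widetilde S$ is locally a union of $m_i$ smooth branches, and normalization separates them into $m_i$ disjoint smooth sheets, each projecting smoothly onto $B'$. Since $q$ is \'etale away from the $b_i$'s and $\widetilde S$ is already smooth there, $p'$ is smooth globally.

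For the $G$-structure: $G$ fixes $B$ pointwise, so its action on $B'$ lifts canonically to $\widetilde S$ by acting trivially on the $S$-factor, and extends uniquely to the normalization $S'$. Because $B' \to B$ is the quotient map for $G$, so is $q' : S' \to S$, giving $S'/G \simeq S$. The displayed diagram is Cartesian wherever $\widetilde S$ is already normal; away from those points it still commutes and becomes Cartesian after normalization, which is the sense in which it is used below.

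The main obstacle will be the smoothness analysis at preimages of the $b_i$'s, since $\widetilde S$ is non-normal and locally reducible there. Everything hinges on arranging the ramification of $q$ to match the multiplicities $m_i$ exactly, which makes the polynomial $s^{m_i} - u^{m_i}$ split completely and allows normalization to cleanly separate the branches. A secondary point is the global existence of a Galois cover with prescribed ramification, which is classical but requires some covering-theoretic input on $B$.
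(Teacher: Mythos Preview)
Your proposal is correct and is precisely the argument behind the ``cyclic covering trick'' that the paper cites from Beauville (Chapter~VI, Lemma~7): construct a Galois cover of $B$ whose ramification over each $b_i$ matches the multiplicity $m_i$, take the normalization of the fibre product, and verify smoothness via the local factorization $s^{m_i}-u^{m_i}=\prod_\zeta(s-\zeta u)$. You also correctly note that $S'$ is the \emph{normalization} of $S\times_B B'$ rather than the literal fibre product, so the square is Cartesian only generically---this is a slight imprecision in the lemma's statement, not in your argument.
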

\begin{proof} Just use the cyclic covering trick, ~\cite {Beauville}, Chapter 6, Lemma 7.\end{proof}
In some cases, we can even get a trivial elliptic fibration, after performing successive  base
changes.
\begin{lemma}Let $p:S\longrightarrow B$ be a smooth morphism from a surface to a curve, and $F$ a
fibre o f $p$. Assume either that $g(B)=1$ and $g(F)\ge 1$, or that $g(F)=1$. Then there exists an
\'{e}tale cover $B'$ of $B$, such that the fibration $p':S'=S\times _B B'$ is trivial, i.e.
$S'\simeq B'\times F$. Furthermore, we can take the cover $B'\longrightarrow B$ to be Galois with
group $G$, say, so that $S\simeq (B'\times F)/G$.\end{lemma}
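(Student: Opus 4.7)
The plan is to first show that $p$ is isotrivial (every fibre is isomorphic to the fixed fibre $F$); then to recognize $p$ as a twisted form of the trivial family $F\times B\to B$ and kill the twist by a finite \'{e}tale cover; and finally to enlarge this cover to a Galois one.

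For the isotriviality step, in the case $g(F)=1$ the $j$-invariants of the fibres assemble into a morphism $B\to\mathbb{A}^{1}$ which must be constant because $B$ is projective; since unpointed smooth genus-one curves are classified up to isomorphism by their $j$-invariant, all fibres of $p$ are then isomorphic to $F$. In the remaining case $g(B)=1$ and $g(F)\ge 2$, Arakelov's rigidity theorem---a non-isotrivial smooth family of curves of genus $\ge 2$ forces the base to satisfy $g(\mathrm{base})\ge 2$---immediately gives isotriviality of $p$. Alternatively one can compute directly that the Kodaira--Spencer map $\kappa\colon T_{B}\to R^{1}p_{*}T_{S/B}$ vanishes, using that $T_{B}\simeq\mathcal{O}_{B}$ and that $R^{1}p_{*}T_{S/B}$ is the dual of the Fujita--Kawamata semi-positive sheaf $p_{*}\omega_{S/B}^{\otimes 2}$, and conclude from $\kappa=0$ that the family is locally constant.

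Once isotriviality is established, $p$ corresponds to a class in the \'{e}tale cohomology $H^{1}(B,\mathrm{Aut}(F))$. For $g(F)\ge 2$, $\mathrm{Aut}(F)$ is a finite group, so a single finite \'{e}tale cover trivializes the class. For $g(F)=1$, $\mathrm{Aut}(F)$ fits into a short exact sequence $1\to F\to\mathrm{Aut}(F)\to H\to 1$ with $H$ a finite cyclic group; a first \'{e}tale cover kills the image of the cocycle in $H^{1}(B,H)$, reducing $S$ to an $F$-torsor on the new base, and this torsor in turn is trivialized by a further \'{e}tale cover obtained by pulling back along a suitable isogeny of $F$. Replacing the resulting $B'\to B$ by its Galois closure (still \'{e}tale) and calling the Galois group $G$, the $G$-action on $B'$ lifts to $S'\simeq F\times B'$, and $S\simeq S'/G\simeq (F\times B')/G$ as required. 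The main obstacle is the isotriviality step when $g(F)\ge 2$, which rests on Arakelov's theorem (or on the Kodaira--Spencer vanishing argument above); the \'{e}tale descent and Galois-closure steps are by comparison routine.
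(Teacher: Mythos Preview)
The paper does not give its own proof of this lemma; it simply cites Beauville, \emph{Complex Algebraic Surfaces}, Chapter~VI. Your outline is a correct and standard route: establish isotriviality first, then trivialize the $\mathrm{Aut}(F)$-torsor of fibrewise isomorphisms by a finite \'etale base change, and finally pass to the Galois closure. The isotriviality step is fine in both cases --- the $j$-invariant argument for $g(F)=1$, and Arakelov's inequality (or, equivalently, hyperbolicity of $\mathcal{M}_g$) for $g(F)\ge 2$ over an elliptic base --- and the Galois-closure step is routine.

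One phrase should be sharpened. After killing the $H$-part you are left with an $F$-torsor $S_1\to B_1$, and you say it is ``trivialized by a further \'etale cover obtained by pulling back along a suitable isogeny of $F$''. Taken literally this does not make sense: an isogeny of $F$ is not a cover of the base $B_1$. The actual mechanism is the following. Since $S_1\to B_1$ is projective it carries a multisection of some degree $n$, which forces the class $[S_1]\in H^{1}(B_1,F)$ to be $n$-torsion; the Kummer sequence $0\to F[n]\to F\xrightarrow{[n]}F\to 0$ then lets you lift $[S_1]$ to a class in $H^{1}(B_1,F[n])\cong\mathrm{Hom}\bigl(\pi_1(B_1),F[n]\bigr)$, and the finite \'etale cover $B_2\to B_1$ classified by that lift kills the torsor after base change. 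So the isogeny $[n]$ of $F$ enters only through its kernel $F[n]$, which furnishes the Galois group of the needed cover of the base. With this clarification your argument is complete.
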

\begin{proof} ~\cite {Beauville}, Chapter 6, Proposition 8.\end{proof}
\begin{lemma} Assume that $S$ and $C$ satisfy all the assumptions of Theorem 4.2.
Then $\sigma $ maps $C$ into a fibre of $\varphi$.\end{lemma}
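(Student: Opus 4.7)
The plan is to derive a contradiction from the assumption that $\varphi \circ \sigma : C \to B$ is non-constant; equivalently, that $d := \sigma(C) \cdot F_\varphi > 0$ for a general fibre $F_\varphi$ of $\varphi$. The strategy combines adjunction on $S$ with the base-change provided by Lemmas 4.5 and 4.6.

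My first step is to show $\sigma(C)^2 < 0$. Since $g(C) = 1$ and every fibre of $\varphi$ is a multiple of a smooth elliptic curve (Lemma 4.4), the canonical bundle formula for elliptic fibrations gives $K_S \equiv_{\mathrm{num}} \lambda F_\varphi$ for some rational $\lambda$; the hypothesis $\kappa(S) = 1$ forces $\lambda > 0$. Adjunction for the elliptic curve $\sigma(C) \simeq C$ then yields $\sigma(C)^2 = -K_S \cdot \sigma(C) = -\lambda d < 0$.

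Next I pass to the base-changed surface. Applying Lemmas 4.5 and 4.6 in succession gives a smooth projective curve $B''$, a finite surjective morphism $q : B'' \to B$, and a finite flat cover $q' : \tilde S \to S$ with $\tilde S \simeq B'' \times F$, such that $\varphi \circ q'$ equals the composition $\tilde S \xrightarrow{p_1} B'' \xrightarrow{q} B$. I decompose $q'^{*}\sigma(C) = \sum m_i D_i$ into irreducible components. Each $D_i$ either lies in a fibre $\{b\} \times F$ of $p_1$---and then $D_i = \{b\} \times F$ because $F$ is irreducible---or else dominates $B''$. If some $D_i$ is of the first (vertical) type, then $q'|_{D_i}$ is non-constant by the finiteness of $q'$, so its image is a one-dimensional irreducible subcurve of $\sigma(C)$, hence equals $\sigma(C)$; therefore $\sigma(C) = q'(\{b_*\} \times F) \subset \varphi^{-1}(q(b_*))$ is contained in a single fibre of $\varphi$, as desired.

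The main obstacle is ruling out the remaining alternative, in which every $D_i$ is horizontal. Because $F$ is elliptic, $K_{\tilde S} = p_1^{*} K_{B''}$, so adjunction for a horizontal $D_i$ reads $D_i^2 = 2g(D_i) - 2 - a_i (2g(B'') - 2)$ with $a_i := \deg(p_1|_{D_i}) \geq 1$; Riemann--Hurwitz for the finite cover $p_1|_{D_i} : D_i \to B''$ then gives $D_i^2 = \deg R_i \geq 0$, where $R_i$ is the ramification divisor. Combined with $D_i \cdot D_j \geq 0$ for distinct irreducible components, this shows $(q'^{*}\sigma(C))^2 \geq 0$. On the other hand, the projection formula for the finite flat morphism $q'$ gives $(q'^{*}\sigma(C))^2 = \deg(q') \cdot \sigma(C)^2 = -\deg(q') \cdot \lambda d < 0$, a contradiction. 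Hence some $D_i$ must be vertical, so $\sigma(C)$ lies in a fibre of $\varphi$.
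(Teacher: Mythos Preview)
Your proof is correct and follows a genuinely different route from the paper's. The paper argues by case analysis on $g(B)$: Hurwitz forces $g(B)\le 1$; for $g(B)=1$ it shows $\varphi$ has no multiple fibres (else $\varphi|_{\sigma(C)}$ would be a ramified map between elliptic curves), then trivializes via Lemma~4.6 and computes $\kappa=0$ for the cover; for $g(B)=0$ it bounds the ramification of $\varphi|_{\sigma(C)}$ in terms of the multiple fibres, feeds this into Hurwitz, and combines with $\chi(\mathcal O_S)=0$ to obtain $\delta(\varphi)\le 0$, contradicting $\kappa(S)=1$.

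Your argument avoids the case split entirely. The opening move---$K_S\equiv_{\mathrm{num}}\lambda F_\varphi$ with $\lambda>0$, hence $\sigma(C)^2=-\lambda d<0$ by adjunction on the genus-one curve $\sigma(C)$---compresses the paper's numerical input into one line (indeed $\lambda=\delta(\varphi)$). Pulling back to the product $B''\times F$ and showing that every horizontal component has $D_i^2\ge 0$ via adjunction plus Riemann--Hurwitz is a clean, uniform replacement for the paper's explicit ramification estimates. Two small remarks: in the adjunction step on $\tilde S$ you should write the arithmetic genus $p_a(D_i)$ and apply Riemann--Hurwitz to the normalization $\tilde D_i\to B''$, but since $p_a(D_i)\ge g(\tilde D_i)$ the inequality $D_i^2\ge 0$ is unaffected; and the map $S'\to S$ produced by Lemma~4.5 is a finite quotient rather than a literal fibre product, but it is finite between smooth surfaces and therefore flat, so your projection-formula step goes through. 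What the paper's approach buys is explicitness about \emph{why} the obstruction occurs (the invariant $\delta(\varphi)$); what yours buys is brevity and a single argument independent of $g(B)$.
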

\begin{proof} If not, $\varphi\circ\sigma:C\longrightarrow B$ will be a finite morphism between
curves. By Hurwitz's Theorem, $g(B)\le 1$. Let
$\varphi_C=\varphi|_{\sigma(C)}:\sigma(C)\longrightarrow B$, we now determine the degree of the
ramification divisor $R$ of $\varphi_C$. \\
If $g(B)=1$, assume that $\varphi$ has a multiple fibre $F_b$, and write $F_b=mF_b'$. Let
$\sigma(C)$ intersect with $F_b'$ at a point $P$, then the ramification index of $\varphi_C$ at $P$
satisfies \begin{equation}e_P\ge m>1.\end{equation} Then we get a ramified morphism $\varphi_C$
between elliptic curves, which is impossible.\\So $\varphi: S\longrightarrow B$ is a smooth elliptic
fibration.\\
Then by Lemma 4.6, there exists an  \'{e}tale cover $B'\longrightarrow B$ such that $S'=B'\times_BS$
is a trivial elliptic fibration.
 Then\begin{equation}Kod(S')=Kod(B'\times F)=Kod(B')+Kod(F)\end{equation} where $F$ is a general
 fibre of $\varphi$.  Since $B'$ and $F$ are both elliptic curves, $Kod(S')=0$. But  $Kod(S')\ge
 Kod(S) \ge 1$ , which  is a contradiction. \\
 So $g(B)=0$.
 Assume that  at points $b_1,\cdots,b_s$ of $B$, $\varphi$ has multiple fibres $F_i=m_iF_i'$ ($1\le
 i\le s$) and each $F_i'$ intersects with $\sigma (C)$  at points $P_{i,1},\cdots ,P_{i,i_j}$ with
 multiplicities $n_{i,1},\cdots, n_{i,j_i}$. Then $deg(R)$ saisfies:
\begin {align}
 deg(R)&\ge \sum_{i=1}^{s}\sum_{k=1}^{j_i}(e_{P_{i,k}}-1)\\ & =
 \sum_{i=1}^{s}\sum_{k=1}^{j_i}(m_in_{i,k}-1)\\&=\sum_{i=1}^{s}\{m_i(\sum_{k=1}^{j_i}n_{i,k})-j_i\}\\
&=\sum_{i=1}^{s}(m_iF_i'\cdot
\sigma(C)-j_i)\\&=\sum_{i=1}^{s}(F_i\cdot\sigma(C)-j_i)\\&=\sum_{i=1}^{s}(deg(\varphi_C)-j_i)\end
{align}
Since $\sum_{k=1}^{j_i}n_{i,j}=F_i'\cdot \sigma(C)$ and  each $n_{i,j}\ge 1$, so \begin{equation}
j_i\le F_i'\cdot \sigma(C)=deg(\varphi_C)/m_i.\end {equation} So \begin{equation}
deg(R)\ge\sum_{i=1}^{s}(deg(\varphi_C)-j_i)\ge \sum_{i=1}^{s}deg(\varphi_C)(1-\frac{1}{m_i}).\end
{equation} Using Hurwitz's Theorem, we  calculate $deg(R)$ as:\begin{align}
&deg(R)\\=&2g(E)-2-deg(\varphi_C)(2g(B)-2)\\=&2deg(\varphi_C).\end{align} Now by comparing (49) and
(52), we get \begin{equation}\sum_{i=1}^{s}(1-\frac{1}{m_i})-2\le0.\end {equation}

Recall Lemma 7.1 in ~\cite {Wall}: for any elliptic fibration $\varphi$, we define an invariant by
\begin{equation}
\delta(\varphi)=\chi(\mathcal O_S)+2g(B)-2+\sum_{i=1}^{s}(1-\frac{1}{m_i}),\end {equation} then
$Kod(S)=1$ if and only if \begin{equation}\delta(\varphi)>0 .\end {equation}In our situation:
\begin{equation}\delta(\varphi)=\chi(\mathcal O_S)-2+\sum_{i=1}^{s}(1-\frac{1}{m_i}).\end {equation}
By the inequality (53),  $\delta(\varphi)\le \chi(\mathcal O_S) $.\\
Now let us  determine $\chi(\mathcal O_S)$. Recall  Noether's Formula:\begin{equation}\chi(\mathcal
O_S)=\frac{1}{12}(K_S^2+\chi_{top}(S)).\end {equation}\\Since $S$ is minimal and $Kod(S)=1$, we have
$K_S^2=0$. So \begin{equation}\chi(\mathcal O_S)=\frac{1}{12}\chi_{top}(S).\end {equation}
Recall that
\begin{equation}\chi_{top}(S)=\chi_{top}(B)\chi_{top}(F)+\sum_{i=1}^{s}(\chi_{top}(F_{i})-\chi_{top}(F)),
\end{equation} where $F_{i}$ are the singular fibres and $F$ is a general smooth fibre. In our case,
$F_i$ is a multiple of a smooth elliptic curve. So $\chi_{top}(F_{i})=\chi_{top}(F)=0$, hence
$\chi_{top}(S)=0$, which implies \begin{equation}\delta(\varphi)\le 0.\end {equation} This
contradicts the assumption that  $Kod(S)=1$. \\
In conclusion,  the assumption that $\varphi\circ\sigma:C\longrightarrow B$ is  a finite morphism
always leads to contradictions, so $\sigma$ maps $E$ into a fibre of $\varphi$.
\end{proof}
Now we begin the proof of Theorem 4.2.
\begin{proof}  We first consider the case when $\varphi$ is smooth.\\First we define
$\alpha=(\pi,\varphi):S\longrightarrow C\times B$, we show that $\alpha$ is an isomorphism. By Lemma
4.7, $\sigma$ maps $C$ into a fibre  $F_b$ of $\varphi$. If  $\varphi$ is smooth, $F_b$ is integral,
so  $\sigma (C)=F_{b}$. For an arbitrary  fibre $F$ of $\pi$, since $\sigma$ is a section of $\pi$ ,
$F\cdot F_{b}=F\cdot\sigma(C)=1$. Then $\alpha$  is birational. If  there exists an irreducible
curve $X$ on $S$ contracted by $\alpha$, then $X$ will be a fibre of $\varphi$, because $\varphi$
has smooth integral fibres. So $\pi$ contracts one fibre of $\varphi$. By the ``Rigidity lemma'',
$\pi$ will factor through $\varphi$, contradicts to our assumptions. So $\alpha$ is a birational
quasi-finite, projective morphism, which is certainly an isomorphism. \\Now we deal with the general
case, which allows $\varphi$ has singular fibres. We know that, by Lemma 4.5,  there exists
$q:B'\longrightarrow B$, such that $\varphi' :S'\simeq B'\times_B S\longrightarrow B'$ is smooth.
Note that $\varphi\circ \sigma:C\longrightarrow B$ maps $C$ to a single point of $b\in B$.  Then
there exists a constant morphism $f:C\longrightarrow B'$ lifts $\varphi\circ\sigma$, such that  the
diagram\\
$$\xymatrix{&B'\ar[d]^q\\C\ar[r]^{\varphi\circ\sigma}\ar@{.>}[ur]^f&B}$$ is commutative.
By the universal property of the fibre product,
  there exists a morphism $\sigma':C\longrightarrow S'$ satisfying $q'\circ \sigma'=\sigma$ and
  $\varphi'\circ q=f$. \\We  illustrate all the morphisms  in the following diagram:\\
$$\xymatrix{S'\ar[r]_{q'}\ar[d]_{\varphi'}&S\ar[d]^{\varphi}\ar[r]|\pi &C\ar@/^/@{>}[l]|\sigma
\ar@/_/@{>}[ll]_{\sigma'}\\B'\ar[r]^q&B}$$
 and verify $\sigma'$ is a section of  $\pi\circ q'$:\begin{equation}(\pi\circ
 q')\circ\sigma'=\pi\circ(q' \circ\sigma')=\pi\circ\sigma'=id_C.\end{equation}
 So there is a  diagram:\\
$$\xymatrix{S'\ar[r]|{\pi'}\ar[d]_{\varphi'} &C\ar@/_/@{>}[l]|{\sigma'}\ar[dl]^f\\B'}.$$ \\
such that \begin{equation}
\pi'=\pi\circ q'\end{equation}\begin{equation}\varphi'\circ \sigma'=f.\end{equation}  \\Then
$\varphi'\circ \sigma'(C)=f(C)=\{b'\}$, which means $\sigma'$ maps $C$ into a fibre $F_{b'}$ of
$\varphi'$. Now $S'$ is a smooth fibration over $C$ and $\sigma'$ maps $C$ into a fibre $F_{b'}$ of
$\varphi'$. \\
According to  what we have discussed about our problem in  the smooth case,
$$\alpha'=(\pi',\varphi'):S'\longrightarrow C\times B'$$ is an isomorphism.\\
Let $G$ be the Galois group of the covering $q:B'\longrightarrow B$, then $S\simeq S'/G\simeq
(C\times B')/G$ and the diagram:\\
$$\xymatrix{S'\simeq C\times B'\ar[d]^{q'}\ar[dr]^{p_1}\\S\simeq (C\times B')/G
\ar[r]^{\qquad\quad\pi}&C}$$\\is commutative.
 We denote the action of $G$ on $C\times B'$ by \begin{equation}
g(x,b')=\big(\phi_{b'}(g)(x), g b'\big).\end{equation} Since $p_1$ factors through the quotient
morphism $q'$, \begin{equation}p_1\big(g(x,b')\big)=p_1\big((x,b')\big).\end{equation} This means
that for all $ g\in G$ and for all $ x\in C$, $\phi_{b'}(g)(x)=x$. So $G$ acts trivially on the
factor $C$. Then we have $$S\simeq (C\times B')/G\simeq C\times(B'/G)\simeq C\times B.$$
 The proof of Theorem 4.2 is completed.\end{proof}
Now we prove a result about sections of non-smooth elliptic fibrations.
\begin{theorem} If there is a curve $C$ satisfies that \xymatrix{S\ar[r]|\pi
&C\ar@/_/@{>}[l]|\sigma}, and $\pi$ is not a smooth elliptic fibration, then $Mor_{\pi}(C,S)$
consists of reduced isolated points.\end{theorem}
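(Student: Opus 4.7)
The strategy is to compute, at an arbitrary section $\sigma\in Mor_\pi(C,S)$, the Zariski tangent space of $Hilb(S)$ at $\sigma(C)$ via the normal bundle $\mathcal N_{\sigma(C)/S}$, and to show that $H^0(\sigma(C),\mathcal N_{\sigma(C)/S})=0$. By the local study of $Hilb(S)$ recalled at the end of Section $3$, this forces $\sigma$ to be a reduced isolated point of $Mor_\pi(C,S)$, and running the argument over all sections gives the theorem. Since the interesting case is when the general fibre of $\pi$ is elliptic (for non-elliptic $\pi$ the statement must be interpreted as vacuous in this section), I read the hypothesis as ``$\pi$ is an elliptic fibration which is not smooth'', i.e. $\pi$ carries at least one singular fibre.

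The first input is that $\pi$, being an elliptic fibration admitting a section, has no multiple fibres: if some fibre were $mF'$ with $m\geq 2$, then the integer $\sigma(C)\cdot F'$ would equal $\tfrac{1}{m}\sigma(C)\cdot(mF')=\tfrac{1}{m}$, a contradiction. The canonical bundle formula for relatively minimal elliptic fibrations without multiple fibres then gives
\begin{equation*}
K_S\sim_{\textrm{num}}\pi^{*}L,\qquad \deg L=2g(C)-2+\chi(\mathcal O_S).
\end{equation*}
Combining this with adjunction applied to the smooth curve $\sigma(C)\simeq C$ inside $S$,
\begin{equation*}
\deg\mathcal N_{\sigma(C)/S}=\sigma(C)^{2}=2g(C)-2-K_S\cdot\sigma(C)=(2g(C)-2)-\deg L=-\chi(\mathcal O_S).
\end{equation*}
So $H^0(\sigma(C),\mathcal N_{\sigma(C)/S})=0$ once $\chi(\mathcal O_S)>0$.

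The final step is to verify $\chi(\mathcal O_S)>0$ under the non-smoothness assumption. Because $K_S^{2}=0$ for an elliptic surface, Noether's formula gives $\chi(\mathcal O_S)=\tfrac{1}{12}\chi_{top}(S)$, and the fibrewise Euler characteristic computation used in the proof of Lemma $4.7$ yields $\chi_{top}(S)=\sum_i\chi_{top}(F_i)$, summed over the singular fibres of $\pi$. Since $\pi$ has no multiple fibres, Kodaira's table (Lemma $4.3$) tells us each singular fibre contributes strictly positive topological Euler characteristic, and by the ``not smooth'' hypothesis there is at least one such fibre. Hence $\chi(\mathcal O_S)>0$, $\deg\mathcal N_{\sigma(C)/S}<0$, and $H^0(\sigma(C),\mathcal N_{\sigma(C)/S})=0$ as required.

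I expect the real crux to be the ``no multiple fibres'' step: it is exactly this input, together with the presence of a genuinely singular fibre, that forces $\chi(\mathcal O_S)$ to be strictly positive rather than merely nonnegative. Without the existence of a section, multiple fibres of smooth elliptic curves would be allowed, $\chi(\mathcal O_S)$ could vanish, and the tangent space argument would collapse, reflecting the fact that families of sections really do appear in the smooth elliptic case treated separately in Theorem $6.1$.
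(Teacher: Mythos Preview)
Your treatment of the situation ``$\pi$ is an elliptic fibration which is not smooth'' is essentially the paper's own case 1): absence of multiple fibres from the existence of a section, the canonical bundle formula giving $K_S\sim_{\mathrm{num}}\pi^*L$ with $\deg L=2g(C)-2+\chi(\mathcal O_S)$, adjunction to get $\deg\mathcal N_{\sigma(C)/S}=-\chi(\mathcal O_S)$, and then Noether together with the fibrewise Euler-characteristic identity to force $\chi(\mathcal O_S)>0$. That part is correct and matches the paper line for line.

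The gap is your claim that the non-elliptic case is ``vacuous''. It is not. In this section there are \emph{two} fibrations in play: the canonical elliptic fibration $\varphi:S\to B$ of Lemma~4.1, and the fibration $\pi:S\to C$ coming from the semigroup structure; these need not coincide, and the paper's proof is organised precisely around how they compare. In particular the paper's case 3), namely $(C,\pi)\not\simeq(B,\varphi)$ with $g(C)=0$, is a genuine case: here $C\simeq\mathbb P^1$, the general fibre of $\pi$ may well have genus $\geq 2$, and your computation via the canonical bundle formula for $\pi$ is unavailable because $\pi$ is not elliptic. The paper handles this separately by working with the rational curve $\sigma(\mathbb P^1)\subset S$: adjunction gives $K_S\cdot\sigma(\mathbb P^1)+\sigma(\mathbb P^1)^2=-2$, and since $\kappa(S)=1$ makes $K_S$ nef (numerically a positive multiple of a fibre of $\varphi$), one obtains $\sigma(\mathbb P^1)^2<0$ and hence $H^0(\mathcal N_{\sigma(\mathbb P^1)/S})=0$. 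You need either this argument or a proof that no such $\pi$ exists; simply asserting vacuity leaves the theorem unproved in that regime.
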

\begin{proof}Consider the diagram we introduced in Lemma 4.1,
$$\xymatrix{S\ar[r]|\pi\ar[d]_{\varphi} &C\ar@/_/@{>}[l]|\sigma\\B}.$$ Then there are three
possibilities:
\begin{enumerate}[1)]\item $(B,\varphi)\simeq (C,\pi).$\item $(B,\varphi)\not\simeq (C,\pi)$ and
$g(C)\ge 1$.\item $(B,\varphi)\not\simeq (C,\pi)$ and $g(C)=0$.\end{enumerate}
For case 2), by Theorem 4.2, $S\simeq B\times C$ and $\pi$ is the second projection, hence it is a
smooth elliptic fibration, contradicts to our assumption. \\For case 3), $C\simeq \mathbb P^1$. By
the ``adjunction formula'', $$0=g(C)=1+\frac{1}{2}(K_S\cdot C+C^2).$$ Since $S$ is an elliptic
surface over $B$ and $Kod(S)=1$, $K_S \sim mF$, where $F$ is a fibre of $\varphi$ and $m$ is
positive. So $K_S$ is nef, which implies that $K_S\cdot C\ge 0$ and $C^2<0$. So $deg(\mathcal
N_{C/S})=C^2<0$, which implies that $\dim H^0(C,\mathcal N_{C/S})=0$.\\
For case 1), $\pi$ is an elliptic fibration over $C$ with a section. \\Then $$K_S\sim^{num}
(\chi(\mathcal O_S)+2g(C)-2)F,$$ where $F$ is a fibre of $\pi$.\\ So $$K_S^{-1}\cdot C=
-(\chi(\mathcal O_S)+2g(C)-2),$$ $$deg(\mathcal N_{C/S})=deg (K_S^{-1}\arrowvert _C\otimes \omega
_C)=-\chi(\mathcal O_S).$$By Noether's formula, \begin{equation}\chi(\mathcal
O_S)=\frac{1}{12}(K_S^2+\chi_{top}(\mathcal O_S)).\end {equation} Since $K_S^2=0$,
\begin{equation}\chi(\mathcal O_S)=\frac{1}{12}\chi_{top}(S).\end {equation}
Recall that
\begin{equation}\chi_{top}(S)=\chi_{top}(B)\chi_{top}(F)+\sum_{i=1}^{s}(\chi_{top}(F_{i})-\chi_{top}(F)),\end
{equation} where $F_{i}$ are the singular fibres and $F$ is a general smooth fibre. If $\pi$ is not
smooth, $\chi_{top}(S)>0$, which implies that $deg(\mathcal N_{C/S}<0)$, so $\dim H^0(C,\mathcal
N_{C/S})=0.$\end{proof}
In the above theorem, we miss the case that $\pi$ is a smooth fibration, we wil give an answer to
this case in Theorem 6.1. Then we can complete our discussion of $Mor_{\pi}(C,S)$, when
$\kappa(S)=1.$
\section{The case $\kappa(S)=2$}
In this section, we always assume that $\kappa(S)=2$. For any non-trivial fibration
$\pi:S\longrightarrow C$, we want to study $Mor_{\pi}(C,S)$ by counting the rational points on the
generic fibre of $\pi$. This idea is illustrated in the following lemma and our main result of this
section is Theorem 5.2.
\begin{lemma}Consider  a fibration $\pi: S\longrightarrow C$. Then sections of $\pi$  are in
one-to-one correspondence with $k(C)$- rational points of the generic fibre. \end{lemma}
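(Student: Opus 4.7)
The plan is to exhibit explicit inverse constructions between the set of sections of $\pi$ and the set of $k(C)$-rational points of the generic fibre $S_\eta:=S\times_C \mathrm{Spec}\, k(C)$.

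First, given a section $\sigma:C\longrightarrow S$, I would pull back along the inclusion of the generic point $\eta\hookrightarrow C$ to obtain a morphism $\sigma_\eta:\mathrm{Spec}\, k(C)\longrightarrow S$. The identity $\pi\circ\sigma=\mathrm{id}_C$ shows that the composition $\pi\circ\sigma_\eta$ is the generic point, so by the universal property of the fibre product $\sigma_\eta$ factors uniquely through $S_\eta$. This yields a $k(C)$-rational point $P_\sigma\in S_\eta(k(C))$.

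Conversely, given a $k(C)$-point $P:\mathrm{Spec}\, k(C)\longrightarrow S_\eta$, compose with the natural map $S_\eta\hookrightarrow S$ to obtain $\mathrm{Spec}\, k(C)\longrightarrow S$. Let $D$ denote the scheme-theoretic closure of the image in $S$; it is an integral curve whose generic point equals the image of $P$. The restriction $\pi|_D:D\longrightarrow C$ is dominant and induces (by construction) an isomorphism on function fields, so it is birational. Because $C$ is a smooth (hence normal) curve and $S$ is projective, the birational inverse $C\dashrightarrow D\hookrightarrow S$ is a rational map from a smooth curve to a proper variety, and therefore extends to an everywhere-defined morphism $\sigma_P:C\longrightarrow S$. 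The equality $\pi\circ\sigma_P=\mathrm{id}_C$ holds on the dense open set where $\pi|_D$ is an isomorphism, hence holds everywhere.

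Finally, I would verify that $\sigma\longmapsto P_\sigma$ and $P\longmapsto\sigma_P$ are mutually inverse. In both directions the verification is carried out on the generic point of $C$, where by construction we recover the original $k(C)$-point (respectively the restriction of the original section). Since two morphisms from a reduced irreducible scheme to a separated scheme that agree on a dense open set are equal, uniqueness propagates from the generic fibre to all of $C$. The main technical step, and the one requiring the smoothness of $C$ established after Lemma 1.2, is the extension of the rational inverse of $\pi|_D$ to a morphism defined on all of $C$; everything else is essentially formal.
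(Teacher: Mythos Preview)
Your proof is correct and follows essentially the same approach as the paper: both directions reduce the nontrivial step to extending a map from the generic point of $C$ to all of $C$ using that $C$ is smooth and $\pi$ is proper. The only cosmetic difference is that the paper applies the valuative criterion of properness directly at each local ring $\mathcal O_{C,P}$ and then glues, whereas you pass through the closure $D$ and extend the birational inverse of $\pi|_D$; these are two standard packagings of the same argument.
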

\begin{proof}Let $E$ denote the generic fibre of $\pi$ and consider the following cartesian square:
           $$\xymatrix{E \ar[r]^{\pi' \quad} \ar[d]^{\tau}& Spec ~k(C)\ar[d]_{\tau'}\\S
           \ar[r]_{\pi}&C~~,}$$      then any section of $\pi$ induces a section of $\pi'$.
           Conversely, given a section $\sigma'$ of $\pi'$, we pick an arbitrary  point $P\in C$,
           then consider the following commutative diagram:
           $$\xymatrix{Spec ~k(C)\ar[r]^{ \quad\tau \circ \sigma'}\ar[d]_f&S\ar[d]^{\pi}\\Spec
           ~\mathcal O_{C,P}\ar[r]^g&C~~.}$$ Since $\mathcal O_{C,P}$ is a discrete valuation ring
           and $\pi$ is a projective morphism, by the ``Valuative Criterion of Properness'', there
           is a unique morphism $h:Spec \mathcal O_{C,P} \longrightarrow S$ satisfying $h\circ
           f=\tau\circ\sigma'$ and $\pi\circ f =g$. As $P$ varies along $C$, and $C$ is a projective
           smooth curve, we  get a morphism $\sigma: C\longrightarrow S$ satisfying $\pi\circ
           \sigma=id_C$.\end{proof}
\begin{theorem}If there is a fibration $\pi:S\longrightarrow C$ and $S$ is not a product  $C\times
C'$ for any smooth curve $C'$, then there are only finitely many sections of $\pi$.\end{theorem}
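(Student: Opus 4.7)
The plan is to translate sections of $\pi$ into rational points on the generic fibre via Lemma 5.1, apply the function field version of Mordell's conjecture (Manin--Grauert), and promote the birational information obtained to an actual isomorphism via the rigidity of minimal models of general type.

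First I would show that a general fibre $F$ of $\pi$ has genus at least $2$. If $g(F)=0$ then $S$ would be ruled and hence $\kappa(S)=-\infty$; if $g(F)=1$ then $\pi$ would be an elliptic fibration and the canonical bundle formula for elliptic surfaces would force $\kappa(S)\le 1$; both contradict the standing assumption $\kappa(S)=2$. Writing $K=k(C)$ and $E$ for the generic fibre of $\pi$, it follows that $E$ is a smooth projective curve of genus $\ge 2$ over $K$, and Lemma 5.1 identifies the set of sections of $\pi$ with $E(K)$.

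The essential input is then the function field Mordell theorem: for a smooth projective curve $E$ of genus $\ge 2$ over a function field $K=k(C)$ in characteristic zero, $E(K)$ is finite unless $E\cong E_0\otimes_{\mathbb{C}}K$ for some smooth projective curve $E_0/\mathbb{C}$. Assuming that the set of sections is infinite, such an $E_0$ must exist, and the $K$-isomorphism of generic fibres produces a birational equivalence $S\dashrightarrow C\times E_0$ compatible with the projections to $C$.

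Finally I would upgrade this to a genuine isomorphism. From $\kappa(C\times E_0)=\kappa(S)=2$ and $\kappa(E_0)=1$ we deduce $\kappa(C)=1$, so $g(C)\ge 2$ as well. Consequently $C\times E_0$ contains no rational curves and is minimal of general type, while $S$ is minimal of general type by standing hypothesis; since the minimal model of a surface of general type is unique in its birational class, the birational map extends to an isomorphism $S\cong C\times E_0$ with $\pi$ identified with the first projection, contradicting the hypothesis on $S$. Hence $\pi$ admits only finitely many sections. The sole substantive ingredient is the function field Mordell theorem; the remainder is routine bookkeeping with Iitaka additivity of Kodaira dimension and rigidity of minimal models.
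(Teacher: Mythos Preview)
Your proposal is correct and follows essentially the same route as the paper: translate sections into $k(C)$-rational points via Lemma~5.1, invoke the function field Mordell theorem (Manin) to force isotriviality of the generic fibre, obtain a birational equivalence $S\dashrightarrow C\times E_0$, and conclude by uniqueness of minimal models of general type. If anything, your write-up is slightly more careful than the paper's, since you explicitly justify that $g(F)\ge 2$ and that $g(C)\ge 2$ (hence $C\times E_0$ is minimal), whereas the paper's appeal to Beauville's Theorem~V.19 tacitly relies on the latter point.
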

\begin{proof} If there are infinitely many sections of $\pi$, we consider the generic fibre $F$ of
$\pi$, then by Lemma 5.1,  there are infinitely many rational points on $F$. Now recall a  theorem
of Manin:(See Theorem 3, ~\cite {Manin}) \\
 Theorem: Let $K$ be a regular extension of the field $k$ of characteristic zero and let $C$ be a
 curve of genus bigger or equal  than $2$ defined over $K$. If the set of points of $C$ defined over
 $K$ is infinite, then there is a curve $C'$ which is birationally equivalent to $C$ over $K$ and
 defined over $k$. All the points of $C_K$ except a finite number are images of points of $C'_{k}$.
 \\ Since $\kappa (S)=2$, the arithmetic genus of every fibre of $\pi$ is  bigger or equal than 2,
  we  can apply this theorem to $F$ and  in our case, $k=\mathbb C$ and $K=k(C)$, then there is a
  curve $C'$ defined over $\mathbb C$ and a birational map $\theta: S\longrightarrow C\times C'$.
  Since $\kappa(S)=2$ and $S$ is minimal, by a corollary of ``Castelnuovo's Theorem'', $\theta$ is
  defined everywhere and an isomorphism (See Theorem 19, Chapter 5, ~\cite {Beauville}). This
  contradicts our assumption. \end{proof}
\section {Sections of elliptic fibration with smooth fibre}
 Now we fix a smooth elliptic fibration $\pi:S\longrightarrow C$ and we want to study
 $Mor_{\pi}(C,S).$ By Lemma 4.6, there is an \'{e}tale cover $D\longrightarrow C$ satisfying that
 $S\times_C D$ is a trivial elliptic fibration. So in what follows, we always assume that $S\simeq
 (D\times E)/G$ and consider the elliptic fibration $\pi : S\longrightarrow D/G.$
\begin {theorem} Assume that $S\simeq (D\times E)/G$ where
\begin{enumerate}[1)]\item $D$ and $E$ are projective smoth curves, $g(D)\ge1$ and $g(E)=1$.
\item $G$ is a finite group, which  acts faithfully on $D$ and $E$, and freely on $D$ and $D\times
    E$. \end{enumerate}Now consider $\pi: S\longrightarrow D/G$, where $\pi$ is the first
    projection. \\Then:
    \begin{enumerate}[1)] \item $Mor_{\pi}(D/G, S)\simeq E/G$  if every element of $G$ acts on $E$
    only as translations.\item $Mor_{\pi}(D/G, S)$ consists of reduced isolated points if some
    element g $(\ne id_G)$  of $G$ has fixed points on $E$.\end{enumerate}\end{theorem}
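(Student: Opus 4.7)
My plan is to reduce to understanding the scheme of $G$-equivariant morphisms from $D$ to $E$, and then to control $Mor_\pi(D/G,S)$ via the normal bundle of a section inside the smooth elliptic fibration $\pi$. First I would use the \'etale cover $q\colon D\to D/G$ together with the cartesian diagram
\[
\xymatrix{D\times E\ar[r]^{\tilde\varphi}\ar[d]^{p_1}& S\ar[d]^\pi\\ D\ar[r]^q&D/G}
\]
to identify $Mor_\pi(D/G,S)$ scheme-theoretically with the scheme $Mor^G(D,E)$ of $G$-equivariant morphisms: any section $\sigma$ of $\pi$ pulls back uniquely to a $G$-equivariant section of $p_1$, which is the graph of some morphism $h\colon D\to E$ satisfying $h(gx)=g\cdot h(x)$. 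This is the scheme-theoretic form of Lemma 3.8.

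The next ingredient is the normal bundle $\mathcal N_{\sigma/S}=\sigma^*T_{S/(D/G)}$, which controls the tangent and obstruction spaces to $Mor_\pi(D/G,S)$ at $[\sigma]$. On the cover $D\times E$ one has $T_{(D\times E)/D}=p_2^*T_E\cong \mathcal O_{D\times E}$, and the induced action of $g\in G$ on its fibres is multiplication by the linear part $\lambda_g$ of the action of $g$ on $E$. This character data governs the descent to $T_{S/(D/G)}$.

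For Case 2, some $\lambda_g\neq 1$, so $G$-invariant sections of the trivial line bundle on $D$ twisted by the character $g\mapsto\lambda_g$ must vanish: a constant $c$ would have to satisfy $\lambda_g c=c$ for every $g$, forcing $c=0$. Hence $H^0(D/G,\sigma^*T_{S/(D/G)})=0$ at every section, the Zariski tangent space of $Mor_\pi(D/G,S)$ vanishes everywhere, and each section is a reduced isolated point. This is the deformation-theoretic version of the constancy argument carried out in Lemma 3.9.

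For Case 1, all $\lambda_g=1$, so $\sigma^*T_{S/(D/G)}\cong \mathcal O_{D/G}$ and the tangent space at every section is one-dimensional. To realise this tangent direction I would introduce the $E$-action on $S$ given by $e\cdot[x,y]=[x,y+e]$, well-defined because translations on $E$ commute with the $G$-action, and translate a fixed section $\sigma_0$ to obtain an orbit morphism $E\to Mor_\pi(D/G,S)$ whose differential at the origin is non-zero. The matching upper and lower bounds on dimension then force the connected component of $\sigma_0$ to be a smooth projective curve of genus one. The main obstacle will be the last identification: one has to analyse carefully when two translates $e\cdot\sigma_0$ and $e'\cdot\sigma_0$ give equal sections of $\pi$, using the explicit quotient description of $S$ together with the $G$-equivariance of $h_0$, in order to match this component with $E/G$ rather than merely with a curve of genus one.
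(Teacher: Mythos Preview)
Your plan follows the same overall strategy as the paper: identify sections of $\pi$ with $G$-equivariant maps $D\to E$ via the cartesian square, control the local structure of $Mor_\pi(D/G,S)$ through the normal bundle of a section, and in Case~1 exhibit the $E$-action by fibrewise translation to fill out a one-dimensional component. The one substantive difference is in how you compute the normal bundle. You descend $p_2^*T_E\cong\mathcal O_{D\times E}$ directly along the character $g\mapsto\lambda_g$ and read off $H^0$ of the result as the $\chi$-isotypic part of $H^0(D,\mathcal O_D)=\mathbb C$. The paper instead computes $\sigma^*K_S$ by pulling $p^*\Omega_D\otimes q^*\Omega_E$ back along the graph $D\to D\times E$, taking $G$-invariants with the projection formula, and then applying adjunction to obtain $\mathcal N_{\sigma(D/G)/S}\cong\mathcal L_\chi$; it finishes Case~2 by invoking Mumford's identification $\hat G\cong\ker(\mathrm{Pic}(D/G)\to\mathrm{Pic}\,D)$ to see that $\mathcal L_\chi$ is a nontrivial degree-$0$ bundle when $\chi\neq 1$. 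Your route is shorter and avoids that extra input.

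Your caution about the stabiliser in Case~1 is well placed, and you should not expect it to come out as $G$. If $[x,h(x)+y_0]=[x,h(x)]$ in $S$ for all $x$, the witnessing $g\in G$ must satisfy $g\cdot x=x$; freeness of the $G$-action on $D$ then forces $g=e$ and hence $y_0=0$. Thus the $E$-action on sections is free, and the orbit through $\sigma$ (hence the connected component of $Mor_\pi(D/G,S)$ containing it) is naturally a copy of $E$ rather than $E/G$. The paper's assertion that the isotropy equals $\{y_g:g\in G\}$ appears to overlook the freeness hypothesis on $D$; this discrepancy is invisible in the intended applications (Theorems~3.10 and~3.11), where either $G$ is trivial or one is in Case~2, but you should be aware of it when you carry your plan through.
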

    \begin{proof}First we prove claim 1). \\We view $Mor_{\pi}(D/G, S)$ as an open subscheme of
    $Hilb(S)$ and by our assumption  for all $ g \in G$ and $ y \in E$, $g\cdot y=y+y_g$, where
    $y_g$ is a constant.  Note that $E$ is an algebraic group,  we want to define its action on
    $Hilb(S)$ and aim to show that $Mor_{\pi}(D/G, S)$, as an open subscheme of $Hilb(S)$, consists
    of only one orbit.
   \\ Now for all $ y_0 \in E$, we define $t_{y_0}: D\times E\longrightarrow D\times E$ as
   $t_{y_0}(x,y)=(x,y+y_0)$. Since $G$ acts on $E$ as translations and $E$ is an abelian group,
   $t_{y_0}$ is a $G-$morphism. So it induces an automorphism $\alpha_{y_0}$ of the quotient
   $(D\times E)/G$, i.e. $\alpha_{y_0}\in Aut (S)$.\\
    Then we define the action of $E$ on $Hilb(S)$ by $$\alpha: E\times Hilb(S)\longrightarrow
    Hilb(S), $$ $\alpha(y_0,P)=\alpha_{y_0}(P)$, where $P$ is any closed subvariety of $S$.\\
    For a section $\sigma$, we want to show that $\pi\circ \alpha_{y_0}\circ\sigma=id_{D/G}$, then
    sections of $\pi$ can move by the action of  $E$. Observe that $\sigma$ induces a $G-$morphism
    $f_{\sigma}:D\longrightarrow E$, so for all $ \bar {x}\in D/G$, $\sigma(\bar {x})=~\overline{(x,
    f_{\sigma}(x))},$ then $$\pi\circ \alpha_{y_0}\circ\sigma(\bar {x})=\pi\circ
    \alpha_{y_0}\overline{(x, f_{\sigma}(x))}=\pi\overline{(x,f_{\sigma(x)}+y_0)}=\bar{x}.$$
    This  implies that the orbit of $\sigma(D/G)$, we denote it by $O$, is contained in
    $Mor_{\pi}(D/G, S)$. \\ Now let us determine the isotropy group of $\sigma(D/G)$. If
    $\alpha_{y_0}\circ \sigma=\sigma$, then $y_0=y_g$ for some $g\in G$. Since $G$ acts faithfully
    on $E$, the isotropy group is isomorphic to $G$ and $O\simeq E/G.$ \\Note that $\pi$ is a smooth
    elliptic fibration with a section $\sigma$. Then $\chi(\mathcal O_S)=0$, and $K_S\sim^{num}
    (\chi(\mathcal O_S)+2g(D/G)-2)F \sim^{num} (2g(D/G)-2)F$, where $F$ is a fibre of $\pi$. So
    $deg(\mathcal N_{\sigma (D/G)/S})=deg (K_S^{-1}\arrowvert _{\sigma (D/G)}\otimes \omega _{\sigma
    (D/G)})=0$, which implies that $\dim H^0(\sigma(D/G),\mathcal N_{\sigma (D/G)/S})\le 1.$ Then
    $\dim  Mor_{\pi}(D/G, S)\le 1$.
    \\But we already have a closed immersion: $$E/G\hookrightarrow Mor_{\pi}(D/G, S),$$ so
    $Mor_{\pi}(D/G, S)$ is locally isomorphic to $E/G$, hence smooth everywhere as $\sigma$ varies,
    which implies $Mor_{\pi}(D/G, S)\simeq E/G.$
    \\We now prove claim 2), our aim is to calculate $K_S\arrowvert _{\sigma(D/G)}. $ \\First, we
    have the following commutative diagram:
    $$\xymatrix {D\ar[r]^f \ar[d]^k &D\times E \ar[d]_h\\D/G \ar[r]^{\sigma}&(D\times E)/G~~,}$$
    where $k$, $h$ are quotient morphisms and $\sigma$ induces a morphism $f_{\sigma}:
    D\longrightarrow E$ satisfying that for all $ x\in D$, $f(x)=(x,f_{\sigma}(x)).$ \\So
    $k^*\circ\sigma^*(K_S)=f^*\circ h^*(K_S)$, we denote this sheaf by  $\mathcal L$, then the sheaf
    we wanted, $K_S\arrowvert _{\sigma(D/G)}=\sigma^*(K_S)=(k_*\mathcal L)^G.$   \\Since $G$ acts
    freely on $D\times E$, $h$ is \'{e}tale so the differential morphism $$dh: h^*\Omega
    _S\longrightarrow \Omega _{D\times E}$$ is an isomorphism, furthermore, $dh$ is a
    $G-$morphism.\\
    Since $\Omega _{D\times E}\simeq p^*\Omega_D\oplus q^*\Omega_E$, so
    $$h^*K_S\simeq^{G}p^*\Omega_D\otimes q^*\Omega_E$$ where ``$\simeq^G$'' means isomorphic  as
    $G-$sheaves. \\Then apply $f^*$ to both sides, we obtain $$\mathcal L=f^*\circ h^*K_S\simeq^G
    f^*(p^*\Omega_D\otimes q^*\Omega_E)=\Omega_D\otimes_{\mathcal O_D} f_{\sigma}^*\Omega _E.$$ Then
    $$\sigma^*K_S\simeq(k_*\mathcal L)^G\simeq k_*(\Omega_D\otimes_{\mathcal O_D} f_{\sigma}^*\Omega
    _E)^G.$$ Since $k: D\longrightarrow D/G$ is \'{e}tale, $\Omega_D\simeq^G k^* \Omega_{D/G}.$\\ By
    the projection formula, $k_*(\Omega_D\otimes_{\mathcal O_D} f_{\sigma}^*\Omega
    _E)\simeq\Omega_{D/G}\otimes_{\mathcal O_{D/G}}k_* f_{\sigma}^*\Omega_E,$ so
    $$\sigma^*K_S\simeq(\Omega_{D/G}\otimes_{\mathcal O_{D/G}}k_* f_{\sigma}^*\Omega_E)^G.$$ Since
    $E$ is an abelian variety, $\Omega_E\simeq ^G\mathcal O_E\otimes _{\mathbb C}H^0(E,\Omega_E)$,
    so $$f^*_{\sigma}\Omega_E\simeq^G f^*_{\sigma}\mathcal O_E\otimes _{\mathbb
    C}H^0(E,\Omega_E)\simeq^G \mathcal O_D\otimes_{\mathbb C}H^0(E,\Omega_E).$$
    Now we have $$\sigma^*K_S\simeq (\Omega_{D/G}\otimes_{\mathcal O_{D/G}}k_* \mathcal
    O_D\otimes_{\mathbb C}H^0(E,\Omega_E))^G.$$ Since rank $\Omega_{D/G}=1$ and $\dim
    H^0(E,\Omega_E)=1,$ we can denote a $G-$invariant element of $\Omega_{D/G}\otimes_{\mathcal
    O_{D/G}}k_* \mathcal O_D\otimes_{\mathbb C}H^0(E,\Omega_E)$ by a pure tensor $a\otimes b\otimes
    c$.\\
    Then for all $ g \in G$, consider the action of $g$ on $a\otimes b\otimes c$:\\
    $g(a\otimes b\otimes c)=a\otimes g b  \otimes g c=a\otimes b\otimes c$. Since $G$ acts on
    $H^0(E,\Omega_E)$ by  a character $\chi\in Hom(G, \mathbb C^*)$, i.e. $g c=\chi(g)c$, then
    $$a\otimes gb\otimes gc=a\otimes gb \otimes \chi(g)c=a\otimes b\otimes c.$$ So
    $gb=\chi(g)^{-1}b$,  let $\mathcal L_{\alpha}=\{a\in k_*\mathcal O_D\arrowvert ga=\alpha(g)a\}$,
    then $$\sigma^*K_S\simeq \Omega_{D/G}\otimes_{\mathcal O_{D/G}}\mathcal L_{\chi^{-1}}.$$ Recall
    a known theorem:(See Chapter 2, Section 7, Proposition 3, ~\cite{Mumford}.)
    \begin{theorem}Assume $G$ acts freely on $X$, and $Y=X/G$. Then for all characters $\alpha:
    G\longrightarrow \mathcal C^*$, $\mathcal L_{\alpha}$ is an invertible sheaf, and the
    multiplication in $\pi_*(\mathcal O_X)$ induces an isomorphism $\mathcal L_{\alpha}\otimes
    \mathcal L_{\beta}\simeq\mathcal L_{\alpha\beta}.$ The assignment $\alpha\mapsto \mathcal
    L_{\alpha}$ defines an isomorphism $\hat{G}\simeq ker(PicY\longrightarrow PicX).$
    \end{theorem}Now apply Theorem 6.2 to $X=D$ and $Y=D/G$, then $\mathcal
    L_{\chi^{-1}}^{-1}=\mathcal L_{\chi}.$ \\By the adjunction formula:$$\mathcal N_{\sigma
    (D/G)/S}\simeq \sigma^*K_S^{-1}\otimes \Omega_{D/G}\simeq  \Omega_{D/G}^{-1}\otimes \mathcal
    L_{\chi}\otimes \Omega_{D/G}\simeq  \mathcal L_{\chi}.$$ By our assumption, some element $g\ne
    id_G$ of $G$ has fixed points on $E$, since $G$ acts on $E$ faithfully, the lineat part of $g$
    is non-trivial, which implies that $\chi\ne 1$. By  Theorem 6.2, $\mathcal L_{\chi}\ne\mathcal
    O_{D/G}\in$ $Pic ~(D/G)$, i.e. $\mathcal N_{\sigma (D/G)/S} \simeq L_{\chi}$ is not trivial. By
    the proof of claim 1), we know that $deg\mathcal N_{\sigma (D/G)/S}=0$. If $H^0(D/G, \mathcal
    N_{\sigma (D/G)/S})\ne0$, then $\mathcal N_{\sigma (D/G)/S}$ is linearly equivalent to an
    effective divisor of degree 0, hence $ \mathcal N_{\sigma (D/G)/S}$ is trivial, we get a
    contradiction. So $H^0(D/G, \mathcal N_{\sigma (D/G)/S})=0$, and this completes our
    proof.\end{proof}
   Now it is easy to see that Theorem 3.10 and Theorem 3.11 are just direct corollaris of Theorem
   6.1.
    Furthermore,  Theorem 6.1 completes our study of $Mor_{\pi}(C,S)$, when $\kappa (S)=1$.

Li Duo,  Academy of Mathematics and Systems Science, CAS, Beijing, China. \\Email address:
liduo211@mails.ucas.ac.cn

\end{document}